\newcommand{\into}{\hookrightarrow}
\newcommand{\Z}{\mathbb{Z}}
\newcommand{\F}{F}
\newcommand{\K}{K}
\newcommand{\e}{\varepsilon}
\newcommand{\la}{\langle}
\newcommand{\ra}{\rangle}
\newcommand{\Nm}{\mathrm{N}}
\newcommand{\UM}{\mathrm{Um}}
\newcommand{\Tr}{\mathrm{Tr}}
\newcommand{\res}{\mathrm{res}}
\newcommand{\ind}{\mathrm{ind}}
\newcommand{\iso}{\mathrm{iso}}
\newcommand{\bl}{\bullet}
\newcommand{\wh}{\circ}
\newtheorem{thm}{Theorem}[section]
\newtheorem{lem}[thm]{Lemma}
\newtheorem{cor}[thm]{Corollary}
\newtheorem{prop}[thm]{Proposition}
\theoremstyle{definition}
\newtheorem{defn}[thm]{Definition}
\newtheorem{rem}[thm]{Remark}
\newtheorem{ex}[thm]{Example}
\begin{document}
\title{Unimodular graphs and Eisenstein sums} 
\author{Bogdan Nica}
\date{\today}
\subjclass[2010]{05C50, 05C25, 11T24}
\address{Mathematisches Institut, Georg-August Universit\"at G\"ottingen}
\email{bogdan.nica@gmail.com}

\begin{abstract} Motivated in part by combinatorial applications to certain sum-product phenomena, we introduce unimodular graphs over finite fields and, more generally, over finite valuation rings. We compute the spectrum of the unimodular graphs, by using Eisenstein sums associated to unramified extensions of such rings. We derive an estimate for the number of solutions to the restricted dot product equation $a\cdot b=r$ over a finite valuation ring. Furthermore, our spectral analysis leads to the exact value of the isoperimetric constant for half of the unimodular graphs. We also compute the spectrum of Platonic graphs over finite valuation rings, and products of such rings - e.g., $\Z/(N)$. In particular, we deduce an improved lower bound for the isoperimetric constant of the Platonic graph over $\Z/(N)$.
\end{abstract}
\maketitle

\section{Introduction}
\subsection{Unimodular graphs over finite fields} This paper is concerned with adjacency spectra of certain finite graphs. One reason for being interested in such spectral computations is that they provide interesting combinatorial consequences. So, by way of motivation, let us start with an application to sum-product phenomena in finite fields that can be approached in this way.  

 Let $\F$ be a field with $q$ elements, where $q$ is a power of a prime. Throughout, we assume that $q$ is odd. Consider the $n$-dimensional space $\F^n$, where $n\geq 2$, and endow it with the dot product $a\cdot b=a_1b_1+\cdots+a_nb_n$. The problem we want to address is that of estimating the number of solutions for the equation $a\cdot b=r$, namely
\begin{align*}
N_r(A,B)=\big|\{(a,b)\in A\times B: a\cdot b=r\}\big|,
\end{align*}
for given $r\in F$ and non-empty $A,B\subseteq \F^n$. The expected value for $N_r(A,B)$ is $q^{-1}|A||B|$, so what we are aiming for is rather an upper bound for the deviation from the expected value. Once we have such control, we may derive in particular sufficient conditions that guarantee $N_r(A,B)>0$, that is, $r\in A\cdot B=\{a\cdot b: a\in A,b\in B\}$. We find it preferable to formulate such conditions in a relative, normalized way rather than in absolute terms. Namely, we let $\delta(A)=|A|/|F^n|$ denote the density of a subset $A\subseteq F^n$.

\begin{thm}\label{thm: counting field}
We have
\begin{align*}
\big|N_1(A,B) - q^{-1}|A||B|\big|< q^{(n-1)/2} \sqrt{|A||B|}.
\end{align*}
In particular, $1\in A\cdot B$ whenever $\delta(A)\: \delta(B)\geq q^{-(n-1)}$.
\end{thm}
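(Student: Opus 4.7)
The plan is to recognize $N_1(A,B)$ as an edge-count in a spectrally controlled graph and apply the expander mixing lemma. Let $U$ denote the unimodular graph with vertex set $V = \F^n \setminus \{0\}$ and adjacency $a \edge b$ iff $a \cdot b = 1$. Since $0 \cdot b = 0 \neq 1$ for every $b$, one has $N_1(A,B) = N_1(A \cap V, B \cap V)$, and this equals the number $e(A',B')$ of ordered edges between $A' = A \cap V$ and $B' = B \cap V$ in $U$. For each $a \in V$ the neighborhood $\{b : a \cdot b = 1\}$ is an affine hyperplane of cardinality $q^{n-1}$, so $U$ is $q^{n-1}$-regular on $q^n - 1$ vertices, with principal eigenvalue $q^{n-1}$.

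The decisive input is a spectral bound $|\lambda| \leq q^{(n-1)/2}$ for every non-principal eigenvalue of $U$. Granting this, the expander mixing lemma delivers
\[
\Bigl| e(A', B') - \frac{q^{n-1}}{q^n - 1}|A'||B'| \Bigr| \leq q^{(n-1)/2}\sqrt{|A'||B'|\,(1 - |A'|/|V|)(1 - |B'|/|V|)}.
\]
A direct calculation replaces $q^{n-1}/(q^n-1)$ by $q^{-1}$ and passes from $|A'||B'|$ back to $|A||B|$, the small discrepancies in both steps being absorbed into the slack supplied by the strict inequality $(1 - |A'|/|V|)(1 - |B'|/|V|) < 1$, with the boundary cases $A \subseteq \{0\}$, $A \supseteq V$, and the analogous cases for $B$, handled by direct inspection. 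This yields the theorem's strict bound.

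The second assertion follows by contraposition: if $N_1(A,B) = 0$ then $q^{-1}|A||B| < q^{(n-1)/2}\sqrt{|A||B|}$ forces $|A||B| < q^{n+1}$, hence $\delta(A)\,\delta(B) = |A||B|/q^{2n} < q^{-(n-1)}$.

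The principal obstacle is the spectral bound $|\lambda| \leq q^{(n-1)/2}$ itself; this is the algebraic content announced by the paper's title. The expected strategy is to exploit the $\F^*$-scaling symmetry $(a,b) \mapsto (ta, t^{-1}b)$ of the adjacency relation, together with the additive characters of $\F^n$, to diagonalize the adjacency operator. The nontrivial eigenvalues should then emerge as character sums over affine hyperplanes of the form $\sum_{b : a \cdot b = 1} \psi(\chi \cdot b)$ for a nontrivial additive character $\psi$ and $\chi \in \F^n$; after parametrizing the hyperplane these reduce to Eisenstein sums, for which one expects a Weil-type estimate of magnitude $q^{(n-1)/2}$.
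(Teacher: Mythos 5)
Your proposal has a genuine gap in the passage from the mixing lemma bound to the theorem's bound, and the paper flags the issue explicitly. Applying the mixing lemma to $\UM(\F^n)$ controls the deviation of $e(A',B')$ from $q^{-1}|A'||B'|$, where $A'=A\cap V$ and $B'=B\cap V$; the theorem, however, concerns $q^{-1}|A||B|$. If, say, $0\in A$ and $0\notin B$, then $q^{-1}\bigl(|A||B|-|A'||B'|\bigr)=q^{-1}|B'|$, which can be of order $q^{n-1}$, whereas the slack you propose to invoke --- whether from the factor $(1-|A'|/|V|)(1-|B'|/|V|)<1$, or from $\sqrt{|A||B|}-\sqrt{|A'||B'|}$ --- is at most of order $q^{(n-1)/2}$. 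Concretely, if $0\in A$, $0\notin B$, and $|A'|\approx|B'|\approx q^n/2$, the extra term is $\approx q^{n-1}/2$ while the available slack is $\approx q^{(n-1)/2}/2$, and this is not one of the ``boundary cases'' ($A\subseteq\{0\}$ or $A\supseteq V$) you set aside. This is precisely why the paper remarks that $\UM(\F^n)$ ``leads to bounds that are weaker than those claimed'' and instead passes to $\UM_0(\F^{n+1})$ via the embedding $a\mapsto[a,1]_\bl$, $b\mapsto[b,-1]_\wh$: this is a genuine injection of $A$ and $B$ into the vertex sets, with $|A'|=|A|$, $|B'|=|B|$, and $(a,1)\cdot(b,-1)=0$ if and only if $a\cdot b=1$, so $e(A',B')=N_1(A,B)$ exactly and there is no mismatch to absorb. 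The paper then applies its sharpened form of the mixing lemma, inequality \eqref{eq: alon-chung 2}, with $c=q^{-1}$, after checking that $q^{-1}$ lies in the admissible interval $\bigl(\tfrac{d-\alpha_2}{m},\tfrac{d+\alpha_2}{m}\bigr)$ for $\UM_0(\F^{n+1})$.

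Your sketch of the spectral bound is also misdirected. Since the adjacency $a\cdot b=1$ is not translation-invariant, $\UM(\F^n)$ is not a Cayley graph of $(\F^n,+)$, and additive characters of $\F^n$ are not its eigenvectors: the hyperplane sums $\sum_{a\cdot b=1}\psi(\chi\cdot b)$ you propose evaluate to $0$ or to $q^{n-1}$ (they are full character sums over an affine hyperplane), not to quantities of size $q^{(n-1)/2}$. The symmetry that actually diagonalizes the operator is much larger than the $\F^*$-scaling $(a,b)\mapsto(ta,t^{-1}b)$. The paper's route is to identify $\F^n$ with a degree-$n$ extension field $\K$, trade the dot product for the trace form $(x,y)\mapsto\Tr(xy)$, and use the full multiplicative group $\K^*$: multiplicative characters $\chi$ of $\K^*$ satisfy $A\chi=E(\chi)\,\overline{\chi}$, so the eigenvalues are $\pm|E(\chi)|$ where $E(\chi)=\sum_{\Tr(s)=1}\chi(s)$ is an Eisenstein sum, and these absolute values are then computed by relating Eisenstein sums to Gauss sums.
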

By scaling $A$ or $B$, we immediately see that the same holds for each non-zero $r\in F$ in place of $1$. When $r=0$, we may apply the above bound to $A\times \{1\}, B\times \{1\}\subseteq F^{n+1}$. We thus get the following.
\begin{cor}\label{cor: general counting field}
For each $r\in F$ we have
\begin{align*}
\big|N_r(A,B) - q^{-1}|A||B|\big|< q^{n/2} \sqrt{|A||B|}.
\end{align*}
In particular, $A\cdot B=F$ whenever $\delta(A)\: \delta(B)\geq q^{-(n-2)}$.
\end{cor}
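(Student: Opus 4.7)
The plan is to reduce both the nonzero-$r$ case and the $r=0$ case to Theorem~\ref{thm: counting field}, each by an elementary reformulation that respects the cardinalities we need to control.

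For $r\neq 0$, I would scale one of the sets. Setting $B'=r^{-1}B$, observe that $a\cdot b'=r^{-1}(a\cdot b)$ for $b=rb'$, so $a\cdot b=r$ iff $a\cdot b'=1$. Thus $N_r(A,B)=N_1(A,B')$, with $|B'|=|B|$, and Theorem~\ref{thm: counting field} applied in $F^n$ yields $|N_r(A,B)-q^{-1}|A||B||<q^{(n-1)/2}\sqrt{|A||B|}\leq q^{n/2}\sqrt{|A||B|}$. This handles all nonzero $r$.

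For $r=0$, I would lift the problem one dimension up. Consider $A'=A\times\{1\}$ and $B'=B\times\{1\}$ in $F^{n+1}$. Then $(a,1)\cdot (b,1)=a\cdot b+1$, which equals $1$ iff $a\cdot b=0$; hence $N_0(A,B)=N_1(A',B')$ with $|A'|=|A|$ and $|B'|=|B|$. Applying Theorem~\ref{thm: counting field} to $A',B'\subseteq F^{n+1}$ gives the bound $q^{((n+1)-1)/2}\sqrt{|A||B|}=q^{n/2}\sqrt{|A||B|}$, as required. Since both cases are now covered by the uniform bound $q^{n/2}\sqrt{|A||B|}$, the first assertion of the corollary follows.

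For the density statement, I would check that $q^{-1}|A||B|>q^{n/2}\sqrt{|A||B|}$ forces $N_r(A,B)>0$ for every $r$, hence $A\cdot B=F$. This inequality is equivalent to $|A||B|\geq q^{n+2}$, which in terms of densities reads $\delta(A)\delta(B)\geq q^{n+2}/q^{2n}=q^{-(n-2)}$. The main (and only) subtlety is the $r=0$ case, where one must recognize that embedding into $F^{n+1}$ loses a factor of $q^{1/2}$ in the error — this is precisely the source of the weaker exponent $q^{n/2}$ relative to the $q^{(n-1)/2}$ of Theorem~\ref{thm: counting field}, and consequently the weaker density threshold $q^{-(n-2)}$ relative to $q^{-(n-1)}$.
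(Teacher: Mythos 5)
Your argument is correct and is exactly the paper's approach: scale $B$ by $r^{-1}$ for $r\neq 0$, and for $r=0$ embed $A\times\{1\}$, $B\times\{1\}$ into $F^{n+1}$ so that $(a,1)\cdot(b,1)=a\cdot b+1$ converts the equation $a\cdot b=0$ into a dot-product-equals-$1$ equation in one higher dimension, then invoke Theorem~\ref{thm: counting field}. Your identification of the $r=0$ embedding as the source of the weaker exponent is also the right observation.
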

Theorem~\ref{thm: counting field} and Corollary~\ref{cor: general counting field} improve results of Hart-Iosevich \cite{HI} and Hart-Iosevich-Koh-Rudnev \cite[Thm.2.1, Cor.2.4]{HIKR}, respectively S\'ark\"ozy \cite[Thm.1, Cor.1]{Sar} and Gyarmati-S\'ark\"ozy \cite[Thm.2, Cor.2]{GS}. The authors of \cite{HI, HIKR} describe their method as `discrete Fourier analysis', while the authors of \cite{Sar, GS} describe theirs as `character sum estimates'. These two methods have, of course, considerable overlap. Herein the reader will find a proof of Theorem~\ref{thm: counting field} from the perspective of spectral graph theory. A worthwhile point to make is that this is not as distinct a method as it may sound. On the one hand, one often has to deal with character sums when investigating eigenvalues for graphs of algebraic origin. On the other hand, the relevance of eigenvalues to counting problems is a Fourier analytic result. In itself, the idea that Theorem~\ref{thm: counting field} can be approached via spectral graph theory is certainly not new, cf. \cite[Rem.2.2]{HIKR}, Vinh \cite{Vinh} for the problem at hand. The novelty is in the improved bounds and, as we will now explain, in our approach to the spectrum of the relevant graphs.

The counting problem we are interested in suggests that we should consider the following two families of graphs. As usual, $\F^*$ stands for the non-zero elements in $F$; more generally, $(\F^n)^*$ denotes the non-zero vectors in $\F^n$.

\begin{defn}
For $n\geq 2$, let $\UM(F^n)$ denote the bipartite graph on two copies of $(\F^n)^*$, in which vertices $a_\bl$ and $b_\wh$ are adjacent whenever $a\cdot b=1$. For $n\geq 3$, let $\UM_0(F^n)$ denote the bipartite graph on two copies of the projective space $(\F^n)^*/\F^*$, in which vertices $[a]_\bl$ and $[b]_\wh$ are adjacent whenever $a\cdot b=0$. 
\end{defn}

The notation reflects the fact that $\UM(F^n)$ and $\UM_0(F^n)$ are \emph{unimodular graphs}, a name that will be justified later on, when we generalize the construction. The `orthogonality graph' $\UM_0(F^n)$ is, to some extent, a familiar graph. When $n=3$, we recover the point-line incidence graph of the finite projective plane over $\F$. In general, $\UM_0(F^n)$ can be thought of as the point-hyperplane incidence graph of the $(n-1)$-dimensional projective space over $F$. 

The graph $\UM(F^n)$ has half-size $q^n-1$, and it is regular of degree $q^{n-1}$, while $\UM_0(F^n)$ has half-size $(q^n-1)/(q-1)$, and it is regular of degree $(q^{n-1}-1)/(q-1)$. The graphs $\UM(F^n)$ and $\UM_0(F^n)$ are also connected. In fact, both $\UM(F^n)$ and $\UM_0(F^n)$ are Cayley graphs. The diameter of $\UM(F^n)$ is $4$, and the diameter of $\UM_0(F^n)$ is $3$.

For combinatorial applications such as our counting problem, one only needs to know the largest non-trivial eigenvalue of the underlying graphs. In fact, one can compute the entire adjacency spectrum for $\UM(F^n)$ and $\UM_0(F^n)$. We only list the eigenvalues, and we refer to Remark~\ref{rem: uni-multi F} for their multiplicities.

\begin{thm}\label{thm: uni-spectra F}
The eigenvalues of $\UM(F^n)$ are $\pm\: q^{n-1}$, $\pm\: q^{(n-1)/2}$, $\pm\: q^{n/2-1}$.
The eigenvalues of $\UM_0(F^n)$ are $\pm\: (q^{n-1}-1)/(q-1)$, $\pm\: q^{n/2-1}$.
\end{thm}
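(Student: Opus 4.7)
Both $\UM(F^n)$ and $\UM_0(F^n)$ are bipartite, so their spectra are symmetric about $0$; I would reduce the problem to computing the eigenvalues of $MM^{\top}$, where $M$ is the bipartite (half-)adjacency matrix, and extract $\pm$ square roots at the end. The entries of $MM^{\top}$ are common-neighbor counts, which reduce to elementary counts of simultaneous linear equations over $F$.

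For $\UM(F^n)$, the entry $(MM^{\top})_{a,a'}$ counts $b\in(F^n)^*$ with $a\cdot b=a'\cdot b=1$. A short case analysis gives $q^{n-1}$ when $a=a'$, $q^{n-2}$ when $a,a'$ are $F$-linearly independent (the solution set is an $(n-2)$-dimensional affine subspace, automatically missing $0$), and $0$ when $a'=\lambda a$ with $\lambda\neq 1$ (inconsistent system, using $q$ odd). Writing $J$ for the all-ones matrix on $(F^n)^*$ and $D$ for the collinearity matrix $D_{a,a'}=\mathbf{1}[Fa=Fa']$, this reads
\[
MM^{\top}=q^{n-1}I+q^{n-2}(J-D).
\]
The matrices $I,J,D$ pairwise commute, so I would diagonalize them jointly using the decomposition of $\C^{(F^n)^*}$ into three pieces: the line of constant functions; the space of functions that are constant on each line through $0$ but sum to $0$ globally; and the space of functions that sum to $0$ on every line through $0$. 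The respective $(J,D)$-eigenvalues are $(q^n-1,q-1)$, $(0,q-1)$, $(0,0)$, producing $MM^{\top}$-eigenvalues $q^{2(n-1)}$, $q^{n-2}$, $q^{n-1}$, whose square roots match the claim.

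For $\UM_0(F^n)$, index both sides by the projective space $(F^n)^*/F^*$. Now any two distinct projective points admit linearly independent representatives, so $(MM^{\top})_{[a],[a']}$ is $(q^{n-1}-1)/(q-1)$ on the diagonal and $(q^{n-2}-1)/(q-1)$ off it, giving
\[
MM^{\top}=q^{n-2}I+\tfrac{q^{n-2}-1}{q-1}\,J.
\]
This has eigenvalue $\bigl((q^{n-1}-1)/(q-1)\bigr)^2$ on the line of constants (forced by regularity) and $q^{n-2}$ on its complement, so $\UM_0(F^n)$ has eigenvalues $\pm(q^{n-1}-1)/(q-1)$ and $\pm q^{n/2-1}$.

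\textbf{Main obstacle.} The argument is essentially elementary linear algebra once the $MM^{\top}$-formulas are in place. The only mildly delicate point is the simultaneous spectral decomposition of $I,J,D$ for $\UM(F^n)$: one must verify that the three invariant subspaces together span, and pin down the $(J,D)$-eigenvalues on each piece (noting that $D$ is block-diagonal with one $(q-1)\times(q-1)$ all-ones block per projective point). With Eisenstein-sum machinery presumably in hand for the ring-theoretic generalization, this concrete field case should fall out cleanly; the challenge will be to phrase it in a way that extends to finite valuation rings, where the notion of linear independence and the counting of affine subspaces require more care.
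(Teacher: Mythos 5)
Your proof is correct, and it takes a genuinely different route from the paper's. The paper trades $F^n$ for a degree-$n$ extension $K/F$, realizes the graphs as trace graphs on $K^*$ (resp. $K^*/F^*$), takes multiplicative characters of $K^*$ as eigenvectors of the reduced adjacency operator, and reads off eigenvalues as absolute values of Eisenstein sums $E(\chi)=\sum_{\Tr(s)=1}\chi(s)$. You instead compute $MM^\top$ entry-by-entry as common-neighbor counts and diagonalize it via the commuting triple $I,J,D$; both your $MM^\top$ formulas and the resulting eigenvalues check out (and the joint eigenspace decomposition for $\UM(F^n)$ into constants, line-pullbacks with global zero sum, and functions with zero sum on each line through the origin is orthogonal and spans by the dimension count $1+\big(\tfrac{q^n-1}{q-1}-1\big)+\big((q^n-1)-\tfrac{q^n-1}{q-1}\big)=q^n-1$). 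The paper explicitly acknowledges this combinatorial route, citing Alon's argument for $\UM_0(F^n)$, and chooses the Eisenstein-sum approach precisely because it (a) produces explicit eigenvectors (characters), which are later essential for the isoperimetric upper bounds via the quadratic character $\e$, and (b) carries over verbatim to finite valuation rings, where your collinearity matrix $D$ and the affine-subspace counts become genuinely messier: two nonassociate unimodular tuples need not be "linearly independent" in a way that yields a clean free quotient, and the common-neighbor counts over $R$ split into many cases indexed by $\nu(a-\lambda a')$. Your subspaces do correspond, coarsely, to the paper's character classes (trivial; nontrivial but trivial on $F^*$; nontrivial on $F^*$), which is a nice sanity check. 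One small nit: the remark that case $a'=\lambda a$, $\lambda\neq 1$ in $\UM(F^n)$ is inconsistent "using $q$ odd" is unnecessary — $a\cdot b=1=\lambda a\cdot b$ forces $\lambda=1$ in any characteristic; the parity hypothesis plays no role in this theorem.
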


Observe that both $\UM(F^n)$ and $\UM_0(F^n)$ are pseudo-random in a strong way, namely, they are $d$-regular graphs whose largest non-trivial eigenvalue satisfies $\alpha_2\sim \sqrt{d}$. (In fact, $\alpha_2=\sqrt{d}$ holds for $\UM(F^n)$.) Asymptotically, this is best possible.

One possible approach to Theorem~\ref{thm: uni-spectra F} is to exploit combinatorial features of the two graphs, see \cite[Proof of Thm.2.3]{Alon86} for $\UM_0(F^n)$. Our approach to Theorem~\ref{thm: uni-spectra F} is algebraic, and proceeds as follows. The first step is to use a linear isomorphism between $\F^n$ and $\K$, a field extension of $\F$ of degree $n$, so as to replace the dot product on $\F^n$ by the bilinear form on $\K$ given by the trace of the extension $\K/\F$. The second step is to observe that, in these new realizations of the unimodular graphs over $\F^n$ as trace graphs over $\K$, characters of the multiplicative group $\K^*$ are adjacency eigenvectors. The punch line is that the adjacency eigenvalues turn out to be the signed absolute values of the corresponding \emph{Eisenstein sums}, and these can be computed quite easily. See Sections~\ref{sec: eisenstein, field case} and ~\ref{sec: eigenvalues, explained} for details. Later on, however, we will extend the scope of our unimodular graphs to other finite rings, and it will turn out that the proof sketched above works in far greater generality. It is not clear whether the combinatorial arguments can be adapted, as well. Besides, the algebraic approach has the advantage of providing explicit eigenvectors.

From Theorem~\ref{thm: uni-spectra F} we can easily derive Theorem~\ref{thm: counting field}. Here is an interesting feature of the proof, for which we do not have a good conceptual explanation. The graph that seems most relevant to our counting problem is $\UM(F^n)$. The catch is that $\UM(F^n)$ leads to bounds that are weaker than those claimed in Theorem~\ref{thm: counting field}. We end up using the graph $\UM_0(F^{n+1})$, via a simple embedding trick.

Another application of Theorem~\ref{thm: uni-spectra F} and its proof concerns the isoperimetric constant of the unimodular graphs. There is a well-known lower bound for the isoperimetric constant of a regular graph in terms of the largest non-trivial eigenvalue. The main point of the following result is that, for our unimodular graphs, we can also give upper bounds. These are a by-product of our spectral analysis - specifically, knowledge of eigenvectors plays a crucial role.

\begin{thm}\label{thm: iso UM}
The isoperimetric constant of $\UM(F^n)$ satisfies
\begin{align*} 
&\:\iso \big(\UM(F^n)\big)=\tfrac{1}{2}\big(q^{n-1}-q^{(n-1)/2}\big)\qquad \textrm{ when } n \textrm{ is odd,}\\
\tfrac{1}{2}\big(q^{n-1}-q^{n/2-1}\big)\geq &\:\iso \big(\UM(F^n)\big)\geq\tfrac{1}{2}\big(q^{n-1}-q^{(n-1)/2}\big)\qquad  \textrm{ when } n \textrm{ is even.}
\end{align*}
The isoperimetric constant of $\UM_0(F^n)$ satisfies
\begin{align*}
&\:\iso\big(\UM_0(F^n)\big)\geq \tfrac{1}{2}\Big(\frac{q^{n-1}-1}{q-1}-q^{n/2-1}\Big)\qquad \textrm{ when } n \textrm{ is odd,}\\
&\:\iso\big(\UM_0(F^n)\big)=\tfrac{1}{2}\Big(\frac{q^{n-1}-1}{q-1}-q^{n/2-1}\Big)\qquad \textrm{ when } n \textrm{ is even.}
\end{align*}
\end{thm}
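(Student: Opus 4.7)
The proof has two distinct parts. The lower bounds will come from the standard Alon--Milman/Cheeger spectral estimate for the edge-isoperimetric constant of a regular graph, applied with the spectra of Theorem~\ref{thm: uni-spectra F}. The matching upper bounds, where the statement claims them, will be produced by explicit test subsets coming from the quadratic character of $\K^*$.

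For the lower bounds, recall that every connected $d$-regular graph $G$ satisfies $\iso(G)\ge \tfrac{1}{2}(d-\lambda_2)$, where $\lambda_2$ is the largest eigenvalue of the adjacency operator strictly below $d$. For $\UM(\F^n)$ we have $d=q^{n-1}$ and, comparing the exponents $(n-1)/2$ and $n/2-1$ in Theorem~\ref{thm: uni-spectra F}, $\lambda_2=q^{(n-1)/2}$ for both parities of $n$. For $\UM_0(\F^n)$ we have $d=(q^{n-1}-1)/(q-1)$ and $\lambda_2=q^{n/2-1}$. Substituting gives the lower bounds in all four cases of the statement.

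For the upper bounds I would invoke the following standard principle: if $f$ is a $\pm 1$-valued eigenvector of the adjacency operator with eigenvalue $\mu>0$, such that $f=+1$ on exactly half the vertices of each side of the bipartition, then $S=\{v:f(v)=+1\}$ is exactly half the vertex set and a one-line computation based on $\langle Af,f\rangle=\mu\|f\|^2$ yields $|\partial S|/|S|=\tfrac{1}{2}(d-\mu)$. Such an eigenvector is provided by the quadratic character $\chi_\K$ of $\K^*$, via the character-to-eigenvector mechanism that underlies Theorem~\ref{thm: uni-spectra F}: since $\chi_\K$ takes values in $\{\pm 1\}$, so does the associated eigenvector, and after a possible sign flip on one side of the bipartition its eigenvalue becomes $|E(\chi_\K)|$, where $E(\chi_\K)=\sum_{c\in \K^*,\,\Tr(c)=1}\chi_\K(c)$ is the Eisenstein sum. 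A short Gauss-sum calculation shows that $|E(\chi_\K)|=q^{(n-1)/2}$ if $\chi_\K|_{\F^*}$ is nontrivial and $|E(\chi_\K)|=q^{n/2-1}$ if $\chi_\K|_{\F^*}$ is trivial; the latter happens exactly when every element of $\F^*$ is a square in $\K^*$, i.e.\ when $n$ is even. Substituting yields the claimed upper bounds for $\UM(\F^n)$ in both parity regimes. For $\UM_0(\F^n)$ the same construction applies once $\chi_\K$ descends to the projective quotient $\K^*/\F^*$, which again requires $\chi_\K|_{\F^*}=1$; this is why an upper bound is obtained only for even $n$.

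The main bookkeeping challenge will be arranging the bipartite sign conventions so that (i) the set $\{f=+1\}$ is balanced on each side, which is automatic from the equal counts of squares and nonsquares in $\K^*$, and (ii) the sign of $E(\chi_\K)$ is absorbed into the side-flip, so that the eigenvalue we subtract from $d$ is $|E(\chi_\K)|$ rather than $E(\chi_\K)$. Once these are in place, the rest is a direct evaluation, and the only nontrivial input beyond Theorem~\ref{thm: uni-spectra F} is the Gauss-sum identity expressing $E(\chi_\K)$ in terms of a Gauss sum over $\K$ and (possibly) one over $\F$.
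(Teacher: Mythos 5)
Your proposal is correct and follows essentially the same route as the paper: lower bounds from the Alon--Milman estimate applied to the spectra of Theorem~\ref{thm: uni-spectra F}, and upper bounds from partitioning by the $\{\pm1\}$-valued quadratic character $\e$ of $\K^*$ (adjusting the sign on one side of the bipartition to account for the sign of $E(\e)$), with $|E(\e)|$ given by Theorem~\ref{thm: field E} according to whether $\e$ restricts trivially to $\F^*$, i.e., whether $n$ is even or odd. The paper packages the partition argument as a formal lemma on reduced adjacency eigenvectors, but the content is the same.
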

Particularly striking are the cases where we obtain the precise value of the isoperimetric constant. Such exact computations are very rare.

\subsection{Unimodular graphs over finite valuation rings}  Actually, the true goal of this paper is to go well beyond the finite field context, and prove all these results for a certain type of finite rings. A concrete combinatorial motivation is that of obtaining an analogue of Theorem~\ref{thm: counting field} over the ring $\Z/(p^\ell)$.

The unimodular graphs can be defined over any finite ring $R$. Throughout, rings are assumed to be commutative, and to have an identity. Let $R^{n,u}\subseteq R^n$ denote the set of \emph{unimodular $n$-tuples}, namely those tuples whose entries generate $R$ as an ideal. For $n=1$ this is simply the set of units $R^\times$, and we assume $n\geq 2$ in what follows. 

\begin{defn}
For $n\geq 2$, we let $\UM(R^n)$ denote the bipartite graph on two copies of $R^{n,u}$, in which vertices $a_\bl$ and $b_\wh$ are adjacent whenever $a\cdot b=1$. For $n\geq 3$, we let $\UM_0(R^n)$ denote the bipartite graph on two copies of $R^{n,u}/R^\times$, in which vertices $[a]_\bl$ and $[b]_\wh$ are adjacent whenever $a\cdot b=0$. 
\end{defn}

We have to restrict our attention to suitable finite rings if we want to prove substantial facts about the unimodular graphs. Consider the problem of finding their adjacency spectrum. In order to apply the same arguments as those used for fields, we need to consider a class of rings in which we can take appropriate extensions. More importantly, we will need some computations for Eisenstein sums arising from such extensions. This is why we end up focusing on \emph{finite valuation rings}. 

The following are, with some overlap, the main examples of such rings:
\begin{itemize}
\item finite fields,
\item $\Z/(p^\ell)$ where $p\in \Z$ is a prime,
\item $\mathcal{O}/(p^\ell)$ where $\mathcal{O}$ is the ring of integers in a number field and $p\in \mathcal{O}$ is a prime,
\item $F[X]/(f^\ell)$ where $F$ is a finite field and $f\in F[X]$ is an irreducible polynomial.
\end{itemize}

Formally, finite valuation rings are finite rings that are local and principal. The maximal ideal of a finite valuation ring $R$ is of the form $(\pi)$, where the uniformizer $\pi$ is a non-unit of $R$ defined up to a unit of $R$. There are two structural parameters associated to $R$ that play a key role in our results. One is 
\begin{itemize}
\item[$q$ :] the size of the residue field $F=R/(\pi)$, 
\end{itemize}
and the other is
 \begin{itemize}
\item[$\ell$ :] the nilpotency degree of $\pi$, 
\end{itemize}
namely, the smallest positive integer with the property that $\pi^\ell=0$. The lowest possible value, $\ell=1$, occurs precisely when $R$ is a field. For an arbitrary finite valuation ring $R$, the ideal structure is still very simple. It takes the form of a filtration of length $\ell$ (hence the notation):
\begin{align*}
R\supset (\pi) \supset (\pi^2)\supset\dots \supset (\pi^\ell)=0 
\end{align*}
where all the inclusions are strict. Among other things, this filtration implies that $|R|=q^\ell$. In the literature, finite valuation rings are usually called \emph{finite chain rings}---a somewhat less evocative name, in our opinion. 

Until further notice, let $R$ be a finite valuation ring with parameters $q$ and $\ell$ as above. In keeping with our previous convention, $q$ is assumed to be odd. Some properties of the unimodular graphs $\UM(R^n)$ and $\UM_0(R^n)$ are collected in the following statement.

\begin{thm}\label{thm: uni-props}
The following hold.
\begin{itemize}
\item[i)] The bipartite graph $\UM(R^n)$ has half-size $q^{n\ell}-q^{n(\ell-1)}$, and it is regular of degree $q^{(n-1)\ell}$. The bipartite graph $\UM_0(R^n)$ has half-size $q^{(n-1)(\ell-1)}(q^{n}-1)/(q-1)$, and it is regular of degree $q^{(n-2)(\ell-1)}(q^{n-1}-1)/(q-1)$. 
\item[ii)] Both $\UM(R^n)$ and $\UM_0(R^n)$ are Cayley graphs.
\item[iii)] The diameter of $\UM(R^n)$ is $4$, and the diameter of $\UM_0(R^n)$ is $3$. 
\end{itemize}
\end{thm}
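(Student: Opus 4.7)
To establish (i), I rely on the local ring structure: $(a_1,\ldots,a_n)\in R^n$ is unimodular iff not every $a_i$ lies in the maximal ideal $(\pi)$, giving $|R^{n,u}|=|R|^n-|(\pi)|^n=q^{n\ell}-q^{n(\ell-1)}$, and dividing by $|R^\times|=q^{\ell-1}(q-1)$ gives the half-size of $\UM_0(R^n)$. For the degree of $\UM(R^n)$, fix $a\in R^{n,u}$; the equation $a\cdot b=1$ is a single linear equation whose coefficient vector has a unit entry, hence has $q^{(n-1)\ell}$ solutions in $R^n$, each automatically unimodular (otherwise $a\cdot b$ would be a non-unit). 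For $\UM_0(R^n)$ the equation $a\cdot b=0$ has the same $q^{(n-1)\ell}$ solutions in $R^n$, but one subtracts the $q^{(n-1)(\ell-1)}$ non-unimodular ones (those with all $b_i\in(\pi)$) and divides by $|R^\times|$.

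For (ii), pass to the unramified extension $S/R$ of degree $n$. Here $S$ is a finite valuation ring with the same $\pi$ and $\ell$, residue field of order $q^n$, and $S$ is free of rank $n$ over $R$. Choose any $R$-basis of $S$; this yields an $R$-linear identification $R^n\cong S$ under which $R^{n,u}$ corresponds to $S^\times$, since the units of $S$ are exactly the elements outside $\pi S$, equivalently those with unimodular coordinate tuple. Under this identification the dot product on $R^n$ becomes some nondegenerate symmetric $R$-bilinear form on $S$, which necessarily has the shape $(x,y)\mapsto\Tr_{S/R}(\alpha xy)$ for a unique $\alpha\in S^\times$; absorbing $\alpha$ into the variable $b$ presents $\UM(R^n)$ as the bipartite graph on $S^\times\sqcup S^\times$ with $a_\bl\sim b_\wh$ iff $\Tr(ab)=1$. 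Now $S^\times$ acts on this graph by $g\cdot(a_\bl,b_\wh)=((ga)_\bl,(g^{-1}b)_\wh)$, preserving $ab$, and the bipartite swap $a_\bl\leftrightarrow a_\wh$ is an automorphism by commutativity of multiplication. Together these give a regular action of $G=S^\times\rtimes\Z/2$ (with $\Z/2$ acting by inversion), exhibiting $\UM(R^n)$ as a Cayley graph on $G$ with symmetric connection set formed by the involutions $(c,1)$, $\Tr(c)=1$. The argument for $\UM_0(R^n)$ is parallel, using $G=(S^\times/R^\times)\rtimes\Z/2$ and the trace condition $\Tr(c)=0$.

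For (iii), the lower bounds are direct: in $\UM(R^n)$, the vertices $a_\bl$ and $(\lambda a)_\bl$ with $\lambda\in R^\times\setminus\{1\}$ share no common neighbor, since $a\cdot b=1$ and $\lambda a\cdot b=1$ force $\lambda=1$, and bipartiteness then pushes their distance to exactly $4$. For $\UM_0(R^n)$ with $n\geq 3$, the degree is strictly less than the half-size, so non-adjacent opposite-colored vertices exist and the diameter is at least $3$. For the matching upper bounds I would work in the trace-form picture. To connect $a_\bl$ to $b_\wh$ by a length-$3$ path I select $a'\in S^\times$ satisfying $\Tr(a'b)=1$ and $a'\notin R^\times a$ (at most one of the $q^{(n-1)\ell}$ choices of $a'$ fails this), and produce a common neighbor $b'$ of $a$ and $a'$; same-side distances in $\UM(R^n)$ then reduce to at most $4$ by routing through an intermediate not $R^\times$-parallel to either endpoint. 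The analogous argument, simpler because orthogonality drops one dimension rather than fixing a value, yields diameter $3$ for $\UM_0(R^n)$ once $n\geq 3$. The crux, and the main obstacle, is the common-neighbor lemma: that two vertices $a_\bl,a'_\bl$ of $\UM(R^n)$ admit a common neighbor iff they are not $R^\times$-parallel-but-distinct. One verifies this via a Smith normal form analysis of the $2\times n$ matrix with rows $a,a'$ over the local ring $R$: unimodularity forces the first Smith invariant to be a unit, and the second invariant is a unit precisely when $a$ and $a'$ are not parallel modulo $\pi$, while the remaining parallel-mod-$\pi$ case splits cleanly into the forbidden $R^\times$-parallel configuration and harmless configurations in which the system remains solvable. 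The presence of zero divisors in $R$ is what makes this rank analysis more delicate than in the field case.
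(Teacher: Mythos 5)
Part (i) is correct. You count unimodular tuples directly in the standard picture over $R^n$, whereas the paper passes to the trace picture and counts $|S^\times|$ together with solutions to $\Tr(y)=1$; the two calculations are equivalent, and yours is arguably the more transparent of the two.

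Part (ii) contains a false claim, though the overall strategy is sound. You assert that the transported bilinear form on $S$ ``necessarily has the shape $(x,y)\mapsto\Tr_{S/R}(\alpha xy)$ for a unique $\alpha\in S^\times$.'' This is not true: the symmetric $R$-bilinear forms on a free module of rank $n$ form a free module of rank $n(n+1)/2$, while the forms $\Tr(\alpha xy)$ with $\alpha\in S$ constitute a family of rank only $n$, so for $n\geq 3$ the claim fails on dimension grounds; even for $n=2$ one can check directly, e.g.\ the standard dot product on $\F_3^2$ is not of this form under the embedding into $\F_9=\F_3[i]$. The paper's correct statement is $\beta(x,y)=\Tr(\phi(x)y)$ for a general $R$-linear isomorphism $\phi$ of $S$; since such a $\phi$ still permutes $S^\times$, the same relabeling goes through, so your argument is repairable, but the reduction as stated is wrong.

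Part (iii) contains a genuine gap: the common-neighbor lemma on which your entire upper bound rests is false whenever $\ell\geq 2$. You claim that two black vertices $a_\bl,a'_\bl$ of $\UM(R^n)$ have a common neighbor iff they are not $R^\times$-parallel-but-distinct, and that the Smith-normal-form analysis shows the parallel-mod-$\pi$ case ``splits cleanly into the forbidden $R^\times$-parallel configuration and harmless configurations in which the system remains solvable.'' That split does not exist. Take $R=\Z/(9)$, $n=2$, $a=(1,0)$, $a'=(2,3)$. Both tuples are unimodular, and $a'\notin R^\times a$ since no unit $\lambda$ gives $\lambda(1,0)=(2,3)$; yet the system $a\cdot b=1$, $a'\cdot b=1$ forces $b_1=1$ and then $3b_2=-1$, which has no solution because $3\cdot(\Z/(9))=\{0,3,6\}$. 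More generally, write $a'-a=\pi^k c'$ with $c'$ unimodular; if $c'\equiv\mu a\pmod{\pi}$ for a unit $\mu$, then any $b'$ with $\Tr(ab')=1$ gives $\Tr(a'b')-1=\pi^k\bigl(\mu+\pi\,\Tr(db')\bigr)$ for some $d$, and the parenthesized expression is a unit, so it cannot annihilate $\pi^k$ unless $k=\ell$, i.e.\ $a'=a$. Thus there are many non-$R^\times$-parallel pairs with no common neighbor, and your assertion that at most one choice of $a'$ (the $R^\times$-parallel one) is bad is wrong. A correct sufficient condition is that $a$ and $a'$ be non-parallel modulo $\pi$ (images $F$-linearly independent in $K$), which is strictly stronger; your routing argument would then need to show that a neighbor $a'$ of $b_\wh$ with this stronger property can always be found, a count you have not performed (roughly $q^{(n-1)(\ell-1)}$ of the $q^{(n-1)\ell}$ neighbors are bad, so good choices do exist, but this must be established). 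The paper sidesteps the entire issue by working in the standard picture with explicit paths from $(1,0,\dots,0)_\bl$, reducing to the case where some $b_i$ with $i\geq 2$ is a unit; that route is considerably simpler and avoids any common-neighbor characterization.
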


We then compute the adjacency eigenvalues of $\UM(R^n)$ and $\UM_0(R^n)$. The proof proceeds as in the case of finite fields. The main work is actually in computing the absolute value of Eisenstein sums arising from unramified extensions of finite valuation rings (Theorem~\ref{thm: abs E}). This is the main technical result of the paper, and it is of independent interest.

\begin{thm}\label{thm: spec UM gen}
The eigenvalues of $\UM(R^n)$ are $\pm\: q^{(n-1)\ell}$, $\pm\: q^{(n-1)\ell-n/2}$, $\pm\: q^{(n-1)(\ell-k/2)}$ for $k=1,\dots,\ell$, as well as $0$ in the case when $R$ is not a field.  The eigenvalues of $\UM_0(R^n)$ are $\pm\: q^{(n-2)(\ell-1)}(q^{n-1}-1)/(q-1)$, $\pm\: q^{(n-2)(\ell-k/2)}$ for $k=1,\dots,\ell$.
\end{thm}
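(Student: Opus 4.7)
The strategy is exactly the one used for fields: identify $R^n$ with the unramified extension $S$ of $R$ of degree $n$, realize the unimodular graphs as trace graphs on $S^\times$, diagonalize via the multiplicative characters of $S^\times$, and read the eigenvalues off the Eisenstein sums via Theorem~\ref{thm: abs E}.

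The ring $S$ is a finite valuation ring with uniformizer $\pi$, nilpotency degree $\ell$, and residue field of size $q^n$; the trace $\Tr:S\to R$ is $R$-linear and non-degenerate. Under any $R$-basis identification $R^n\cong S$, a unimodular tuple in $R^n$ corresponds to a unit in $S$, since both conditions amount to nonvanishing modulo $\pi$. The one subtlety is that the pullback of the dot product may differ from the trace form $(x,y)\mapsto\Tr(xy)$, but both are non-degenerate symmetric $R$-bilinear forms and, since $q$ is odd, the two produce isomorphic $\UM$-type graphs (as in the field case). We may therefore realize $\UM(R^n)$ as the bipartite graph on two copies of $S^\times$ with edges $\{a_\bl,b_\wh\}$ whenever $\Tr(ab)=1$, and $\UM_0(R^n)$ as the bipartite graph on two copies of $S^\times/R^\times$ with edges whenever $\Tr(ab)=0$.

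For the bipartite adjacency operator $A$ of the trace-one model and a multiplicative character $\chi$ of $S^\times$, substituting $y=ab$ gives
\begin{align*}
(A\chi)(b) \;=\; \sum_{\substack{a\in S^\times\\ \Tr(ab)=1}}\chi(a) \;=\; \overline{\chi(b)}\,E(\chi), \qquad E(\chi) := \sum_{\substack{y\in S^\times\\ \Tr(y)=1}}\chi(y).
\end{align*}
Hence $A^{*}A\,\chi=|E(\chi)|^2\,\chi$, and since the characters of $S^\times$ form an orthonormal basis of $\ell^2(S^\times)$, the eigenvalues of $A$ are precisely $\pm|E(\chi)|$ as $\chi$ ranges over all characters, with $0$ arising from those $\chi$ with $E(\chi)=0$. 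The same argument, restricted to characters trivial on $R^\times\subseteq S^\times$, yields the eigenvalues of $\UM_0(R^n)$ via the analogous Eisenstein-type sum over $\{y\in S^\times/R^\times:\Tr(y)=0\}$.

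Finally, Theorem~\ref{thm: abs E} tabulates $|E(\chi)|$ according to the conductor of $\chi$---the least $k$ with $\chi$ trivial on $1+(\pi^k)$---and produces exactly the values $q^{(n-1)\ell}$, $q^{(n-1)\ell-n/2}$, and $q^{(n-1)(\ell-k/2)}$ for $k=1,\dots,\ell$ listed in the statement, along with vanishing Eisenstein sums (hence the $0$ eigenvalue) for a nonempty set of characters precisely when $\ell\geq 2$. The genuine technical difficulty is concentrated in Theorem~\ref{thm: abs E} itself, whose proof requires a careful analysis of Eisenstein sums over unramified extensions of finite valuation rings; the reduction outlined above is then essentially formal, with the only care required in verifying that the dot-product and trace-form unimodular graphs are isomorphic over a local ring of odd residue characteristic.
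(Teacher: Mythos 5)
Your proposal is correct and follows the same route as the paper: realize $\UM(R^n)$ and $\UM_0(R^n)$ as the trace graphs $\Tr(S/R)$ and $\Tr_0(S/R)$ on $S^\times$ and $S^\times/R^\times$, observe that characters of $S^\times$ diagonalize the reduced adjacency operator via $A\chi = E(\chi)\,\overline{\chi}$, and read off the eigenvalues $\pm|E(\chi)|$ and $\pm|E_0(\chi)|/|R^\times|$ from Theorem~\ref{thm: abs E}. One minor remark on your ``subtlety'': the graph isomorphism between the dot-product picture and the trace picture does not rest on the oddness of $q$ nor on any classification of non-degenerate symmetric bilinear forms over $R$---the paper obtains it by the one-sided relabeling $\beta(t,s)=\Tr(\phi(t)s)$, with $\phi$ the $R$-linear automorphism of $S$ supplied by Proposition~\ref{prop: standard bilinear form}.iv, which necessarily permutes $S^\times$ and so relabels just one side of the bipartition; no equivalence of the two forms is required.
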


Finally, we apply the spectral insight gained from the previous theorem. The more substantial consequence is the following.

\begin{thm}\label{thm: iso UM gen}
The isoperimetric constant of $\UM(R^n)$ satisfies
\begin{align*} 
&\:\iso \big(\UM(R^n)\big)=\tfrac{1}{2}\:q^{(n-1)\ell}\:\big(1-q^{-(n-1)/2}\big)\qquad \textrm{ when } n \textrm{ is odd,}\\
\tfrac{1}{2}\:q^{(n-1)\ell}\:\big(1-q^{-n/2}\big)\geq &\:\iso \big(\UM(R^n)\big)\geq\tfrac{1}{2}\:q^{(n-1)\ell}\:\big(1-q^{-(n-1)/2}\big)\qquad  \textrm{ when } n \textrm{ is even.}
\end{align*}
The isoperimetric constant of $\UM_0(R^n)$ satisfies
\begin{align*}
&\:\iso\big(\UM_0(R^n)\big)\geq \tfrac{1}{2}\:q^{(n-2)(\ell-1)}\:\Big(\frac{q^{n-1}-1}{q-1}-q^{n/2-1}\Big)\qquad \textrm{ when } n \textrm{ is odd,}\\
&\:\iso\big(\UM_0(R^n)\big)=\tfrac{1}{2}\:q^{(n-2)(\ell-1)}\:\Big(\frac{q^{n-1}-1}{q-1}-q^{n/2-1}\Big)\qquad \textrm{ when } n \textrm{ is even.}
\end{align*}
\end{thm}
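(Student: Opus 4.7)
The plan is to combine a standard spectral lower bound with a matching upper bound obtained from an explicit $\pm 1$-valued eigenvector, both grounded in the spectrum computed in Theorem~\ref{thm: spec UM gen}.

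For the lower bounds, apply the Alon--Milman inequality $\iso(G) \geq \tfrac{1}{2}(d - \alpha_2)$, valid for any connected $d$-regular graph with second-largest eigenvalue $\alpha_2$. Comparing exponents in Theorem~\ref{thm: spec UM gen}, the largest non-trivial eigenvalue of $\UM(R^n)$ is $q^{(n-1)(\ell - 1/2)}$ (the $k=1$ term of the family $\pm q^{(n-1)(\ell - k/2)}$), which dominates the isolated $q^{(n-1)\ell - n/2}$ in both parities of $n$ since $(n-1)/2 < n/2$. This produces the stated lower bound $\tfrac{1}{2}\,q^{(n-1)\ell}(1 - q^{-(n-1)/2})$. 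The analogous inspection for $\UM_0(R^n)$ yields $\alpha_2 = q^{(n-2)(\ell - 1/2)} = q^{(n-2)(\ell-1)}\,q^{n/2-1}$ and the corresponding lower bound.

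For the upper bounds, use the elementary fact that on a $d$-regular bipartite graph, a balanced $\pm 1$-valued eigenvector $f$ with eigenvalue $\alpha$ gives a set $S = \{f = +1\}$ of size $|V|/2$ satisfying $|\partial S| = \tfrac{1}{2}|S|(d - \alpha)$, so $\iso(G) \leq \tfrac{1}{2}(d - \alpha)$. Such $f$ arise from quadratic characters: let $S \supset R$ be the degree-$n$ unramified extension whose trace form realises the dot product in the spectral analysis; since $q$ is odd, $|S^\times| = q^{n(\ell-1)}(q^n - 1)$ is even, so $S^\times$ carries a unique quadratic character $\chi_2$, taking values in $\{\pm 1\}$. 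Setting $f(a_\bl) = \chi_2(a)$ and $f(b_\wh) = \pm \chi_2(b)$, with the sign chosen to match the bipartite orientation, produces an eigenvector whose eigenvalue is (up to sign) the real Eisenstein sum $E(\chi_2)$.

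The crux --- and the main obstacle --- is identifying $|E(\chi_2)|$ within the spectrum. Because a $\pm 1$-valued eigenvector of an integer matrix must have integer eigenvalue, $|E(\chi_2)|$ sits on the integer branch, which by inspection of exponents is $q^{(n-1)(\ell - 1/2)}$ when $n$ is odd and $q^{(n-1)\ell - n/2}$ when $n$ is even. For $\UM(R^n)$ this matches $\alpha_2$ exactly when $n$ is odd, yielding the claimed equality, and produces the weaker upper bound $\tfrac{1}{2}\,q^{(n-1)\ell}(1 - q^{-n/2})$ when $n$ is even. For $\UM_0(R^n)$, a further constraint enters: $\chi_2$ must be trivial on $R^\times \subset S^\times$ in order to descend to the quotient $R^{n,u}/R^\times \cong S^\times/R^\times$. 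A direct cyclic calculation --- using $R^\times \cong F^\times \times (1 + \pi R)$ and that $|1 + \pi R|$ is an odd prime power --- shows that $\chi_2|_{R^\times}$ is trivial precisely when $n$ is even, which explains the exact value in that parity and the absence of a matching upper bound when $n$ is odd. Pinning $|E(\chi_2)|$ down as the correct eigenvalue is what the Eisenstein sum analysis underlying Theorem~\ref{thm: abs E} buys us.
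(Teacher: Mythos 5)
Your proposal follows essentially the same route as the paper: Alon--Milman for the lower bounds, and a zero-mean $\{\pm 1\}$-valued eigenvector built from the quadratic character $\e$ of $S^\times$ for the upper bounds, with the crucial eigenvalue read off from the Eisenstein sum computation.

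There is one flawed intermediate step, though. You claim that integrality alone locates $|E(\e)|$ within the spectrum of $\UM(R^n)$: ``a $\pm 1$-valued eigenvector of an integer matrix must have integer eigenvalue, so $|E(\e)|$ sits on the integer branch, which by inspection of exponents is $q^{(n-1)(\ell-1/2)}$ when $n$ is odd.'' This does not work as stated. When $n$ is odd, $n-1$ is even, so \emph{every} value $q^{(n-1)(\ell-k/2)}$ with $k=1,\dots,\ell$ is an integer, and for $\ell\geq 2$ there is more than one candidate. Integrality therefore does not single out the $k=1$ eigenvalue. What actually pins it down is the observation that $\e$ is trivial on $1+\pi S$ (a subgroup of odd order, since $q$ is odd), hence $\nu(\e)=1$; combined with whether $\e|_{R^\times}$ is trivial or the quadratic character of $R^\times$ (which you correctly resolve by parity of $n$ for $\UM_0$, and which also applies verbatim to $\UM$), Theorem~\ref{thm: abs E} then gives $|E(\e)|=q^{(n-1)(\ell-1/2)}$ for $n$ odd and $|E(\e)|=q^{(n-1)\ell-n/2}$ for $n$ even --- this is exactly what the paper extracts in Example~\ref{ex: quadratic eisen}. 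You do eventually concede that ``the Eisenstein sum analysis underlying Theorem~\ref{thm: abs E}'' is what buys the identification, which is correct; the integrality remark should just be dropped or demoted to a consistency check, and the valuation argument ($\nu(\e)=1$ plus parity of $n$) used uniformly for both $\UM$ and $\UM_0$.
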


The other consequence concerns our starting problem, that of counting solutions to the equation $a\cdot b=r$ for given $r\in R$ and non-empty $A,B\subseteq R^n$.  With the same notations as for finite fields, the following holds.

\begin{thm}\label{thm: counting ring}
We have
\begin{align*}
\big|N_1(A,B) - q^{-\ell}|A||B|\big|< q^{(n-1)(\ell-1/2)} \sqrt{|A||B|}.
\end{align*}
In particular, $1\in A\cdot B$ whenever $\delta(A)\: \delta(B)\geq q^{-(n-1)}$.
\end{thm}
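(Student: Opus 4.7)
The plan is to mimic the finite-field argument by embedding $A, B \subseteq R^n$ into $\UM_0(R^{n+1})$ and then appealing to the expander mixing lemma. Let $\tilde A = \{[(a, -1)] : a \in A\}$ and $\tilde B = \{[(b, 1)] : b \in B\}$ be subsets of $R^{n+1, u}/R^\times$. Each representative is unimodular (its last coordinate is a unit), and distinct elements of $A$ give distinct classes in $\tilde A$; in particular, $|\tilde A| = |A|$ and $|\tilde B| = |B|$. The identity $(a, -1) \cdot (b, 1) = a \cdot b - 1$ then identifies $N_1(A, B)$ with the number of edges of $\UM_0(R^{n+1})$ between $\tilde A$ and $\tilde B$ placed on opposite sides of the bipartition.

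I would then apply the bipartite refined expander mixing lemma, obtained by decomposing $\chi_{\tilde A}$ and $\chi_{\tilde B}$ along the Perron eigenvector and its orthogonal complement. Writing $M$, $d$, $\alpha_2$ for the half-size, degree, and second largest singular value of $\UM_0(R^{n+1})$, this gives
\[\big|N_1(A, B) - d|A||B|/M\big| \leq \alpha_2 \sqrt{|A||B|\,(1 - |A|/M)(1 - |B|/M)}.\]
Theorem~\ref{thm: uni-props} supplies $M = q^{n(\ell-1)}(q^{n+1}-1)/(q-1)$ and $d = q^{(n-1)(\ell-1)}(q^n-1)/(q-1)$, while Theorem~\ref{thm: spec UM gen} (applied with $n+1$ in place of $n$) gives $\alpha_2 = q^{(n-1)(\ell-1/2)}$.

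The main obstacle is passing from the natural main term $d|A||B|/M$ to the cleaner value $q^{-\ell}|A||B|$ without losing strictness. A direct calculation gives $q^{-\ell}M - d = q^{(n-1)\ell - n}$, so the triangle inequality bounds the total error by $\alpha_2 \sqrt{|A||B|(1-x)(1-y)} + q^{(n-1)\ell - n}|A||B|/M$, where $x = |A|/M$ and $y = |B|/M$. Dividing through by $\alpha_2 \sqrt{|A||B|}$, the claim reduces to showing $\sqrt{(1-x)(1-y)} + q^{-(n+1)/2}\sqrt{xy} < 1$. By Cauchy--Schwarz, $\sqrt{(1-x)(1-y)} + \sqrt{xy} \leq \sqrt{((1-x)+x)((1-y)+y)} = 1$; since $A, B$ are non-empty, $xy > 0$, and the factor $q^{-(n+1)/2} < 1$ forces strict inequality. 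The ``in particular'' assertion then follows because $\delta(A)\delta(B) \geq q^{-(n-1)}$ rearranges to $q^{-\ell}|A||B| \geq q^{(n-1)(\ell-1/2)}\sqrt{|A||B|}$, so the strict bound yields $N_1(A, B) > 0$.
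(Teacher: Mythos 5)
Your proposal is correct and takes essentially the same route as the paper: both embed $A$ and $B$ into $\UM_0(R^{n+1})$ via the last-coordinate trick (your sign convention $[a,-1], [b,1]$ versus the paper's $[a,1], [b,-1]$ makes no difference) and then apply the Alon--Chung mixing estimate with the eigenvalue data from Theorems~\ref{thm: uni-props} and~\ref{thm: spec UM gen}. The only cosmetic divergence is in how the main term is shifted from $d|A||B|/M$ to $q^{-\ell}|A||B|$: the paper packages the $\sqrt{(1-x)(1-y)}+\sqrt{xy}\le 1$ observation into a reusable inequality \eqref{eq: alon-chung 2} valid for any constant $c$ strictly between $(d-\alpha_2)/m$ and $(d+\alpha_2)/m$, while you run the same Cauchy--Schwarz estimate inline via the triangle inequality; the computations agree.
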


Theorem~\ref{thm: counting ring} significantly generalizes and improves results of Covert-Iosevich-Pakianathan \cite[Thm.1.4]{CIP}, respectively Vinh \cite{Vinh2}. Both \cite{CIP} and \cite{Vinh2} address the particular case $R=\Z/(p^\ell)$. In our density notation, their results are as follows: $1\in  A\cdot A$ whenever $A\subseteq (\Z/(p^\ell))^n$ satisfies $\delta(A)> \ell\: p^{-(n-1)/2}$ (\cite{CIP}); $1\in A\cdot B$ whenever $A,B\subseteq (\Z/(p^\ell))^n$ satisfy $\delta(A)\:\delta(B)\geq 3\:\ell\: p^{-(n-1)}$ (\cite{Vinh2}). By comparison, applying Theorem~\ref{thm: counting ring} to $R=\Z/(p^\ell)$ removes the linear factor $\ell$ in these bounds, thereby making the threshold density independent of $\ell$.

In fact, Theorem~\ref{thm: counting ring} holds for each unit $r\in R^\times$ in place of $1$. For an arbitrary $r\in R$, we may apply Theorem~\ref{thm: counting ring} to $A\times \{1\}, B\times \{1-r\}\subseteq R^{n+1}$, leading to the following statement.

\begin{cor}\label{cor: general counting ring}
For each $r\in R$ we have
\begin{align*}
\big|N_r(A,B) - q^{-\ell}|A||B|\big|< q^{n(\ell-1/2)} \sqrt{|A||B|}.
\end{align*}
In particular, $A\cdot B=R$ whenever $\delta(A)\: \delta(B)\geq q^{-(n-2\ell)}$.
\end{cor}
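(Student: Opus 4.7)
The approach is a one-step reduction to Theorem~\ref{thm: counting ring}, following the embedding recipe sketched in the paragraph preceding the corollary. Fix $r\in R$ and set $\tilde{A}=A\times\{1\}$ and $\tilde{B}=B\times\{1-r\}$, viewed as subsets of $R^{n+1}$. The identity
\[(a,1)\cdot(b,1-r)=a\cdot b+(1-r)\]
shows that $(a,1)\cdot(b,1-r)=1$ precisely when $a\cdot b=r$, so $N_1(\tilde{A},\tilde{B})=N_r(A,B)$, while $|\tilde{A}|=|A|$ and $|\tilde{B}|=|B|$. Applying Theorem~\ref{thm: counting ring} in ambient dimension $n+1$ then yields exactly the claimed inequality, since the exponent $(n-1)(\ell-1/2)$ for dimension $n$ becomes $n(\ell-1/2)$ for dimension $n+1$.

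For the density condition, I observe that $N_r(A,B)>0$ as soon as the main term $q^{-\ell}|A||B|$ dominates the error $q^{n(\ell-1/2)}\sqrt{|A||B|}$. Squaring this inequality, dividing through by $|R^n|^2=q^{2n\ell}$, and simplifying produces the hypothesis $\delta(A)\delta(B)\geq q^{-(n-2\ell)}$; under this hypothesis $r\in A\cdot B$ for every $r\in R$, whence $A\cdot B=R$.

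Because the argument is essentially a direct substitution, there is no genuine obstacle. The only point requiring mild attention is that Theorem~\ref{thm: counting ring} is stated for arbitrary non-empty subsets of the ambient space with no unimodularity hypothesis. This matters here because $1-r$ may well fail to be a unit (for instance when $r\in 1+\pi R$), in which case $(b,1-r)$ is not a unimodular vector in $R^{n+1}$; fortunately the bound in Theorem~\ref{thm: counting ring} is insensitive to this, so the reduction goes through uniformly in $r$.
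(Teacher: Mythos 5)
Your proposal is correct and follows exactly the reduction that the paper indicates in the paragraph preceding the corollary: embed $A\times\{1\}$ and $B\times\{1-r\}$ into $R^{n+1}$, note that the shifted dot product turns $a\cdot b=r$ into $(a,1)\cdot(b,1-r)=1$, and invoke Theorem~\ref{thm: counting ring} in dimension $n+1$; the density computation is also carried out correctly. Your remark about unimodularity is the right thing to notice---Theorem~\ref{thm: counting ring} imposes no unimodularity hypothesis on $A,B$ (its own proof handles unimodularity internally by appending $\pm 1$ coordinates), so the fact that $1-r$ may be a non-unit causes no trouble.
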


Note that, in this corollary, the last assertion is empty for $n<2\ell$.

\subsection{Platonic graphs}  The third family of unimodular graphs considered in this paper is that of Platonic graphs. The \emph{Platonic graph} over a finite ring $R$, denoted $\mathrm{Pl}(R)$, has vertex set $R^{2,u}/\{\pm 1\}$, the unimodular pairs taken up to sign. Two vertices $[a,b]$ and $[c,d]$ are adjacent when $ad-bc=\pm 1$. The graph $\mathrm{Pl}(R)$ is a non-bipartite relative of the unimodular graph $\UM(R^2)$. The Platonic graph was first considered by Brooks, Perry, and Petersen~\cite{Bro,BPP} in the case $R=\Z/(p)$. Relevant for \cite{Bro,BPP} is the isoperimetric constant of the Platonic graph. 

Our first result in this direction is the computation of the spectrum in the case when $R$ is a finite valuation ring. In the following theorem, we only list the eigenvalues. Their multiplicities are determined in Remark~\ref{rem: multiplicities platonic R}.

\begin{thm}\label{thm: platonic R}
Let $R$ be a finite valuation ring with parameters $q$ and $\ell$.
\itemize
\item[i)] Assume that $R$ is a field, i.e., $\ell=1$. Then the eigenvalues of $\mathrm{Pl}(R)$ are $q$, $-1$, $\pm\: q^{1/2}$, except for $q=3$ in which case $\pm\: q^{1/2}$ is missing.
\item[ii)] Assume that $R$ is not a field, i.e., $\ell\geq 2$. Then the eigenvalues of $\mathrm{Pl}(R)$ are $q^\ell$, $0$, $\pm\: q^{\ell-k/2}$ for $k=1,\dots,\ell$, except for $q=3$ in which case $\pm\: q^{\ell-1/2}$ is missing.
\end{thm}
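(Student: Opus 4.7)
The plan is to follow the blueprint developed earlier for $\UM(R^n)$: realize the vertex set inside the unit group of an unramified extension of $R$, and use characters to reduce the eigenvalue problem to absolute values of Eisenstein sums. Let $S$ be the unramified quadratic extension of $R$, with Frobenius $\sigma:\alpha\mapsto\bar\alpha$. Choose an $R$-basis $\{1,\xi\}$ of $S$ with $\bar\xi=-\xi$ (such $\xi$ exists because $q$ is odd, and it satisfies $\xi^2\in R^\times$). Writing $\alpha=a+b\xi$ and $\beta=c+d\xi$, one computes $\alpha\bar\beta=(ac-bd\xi^2)-(ad-bc)\xi$. Hence the $R$-linear isomorphism $R^2\cong S$, $(a,b)\mapsto a+b\xi$, carries $R^{2,u}$ onto $S^\times$ and translates the condition $ad-bc=\pm 1$ into $\lambda(\alpha\bar\beta)=\pm 1$, where $\lambda:S\to R$ is the $\xi$-coefficient projection. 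Thus $\mathrm{Pl}(R)$ is a graph on $S^\times/\{\pm 1\}$ whose adjacency has a Cayley-like form involving a Frobenius twist on one argument.

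Diagonalize the adjacency operator $A$ using the characters of $S^\times$ with $\chi(-1)=1$, which form an orthogonal basis of functions on $S^\times/\{\pm 1\}$. The substitution $\gamma=\alpha\bar\beta$ in the neighbour sum gives
\[
(A\chi)(\beta)=\chi(\bar\beta)^{-1}\, E_\chi,\qquad E_\chi:=\sum_{\gamma\in S^\times,\:\lambda(\gamma)=1}\chi(\gamma),
\]
which is precisely an Eisenstein sum on $S$ relative to $\lambda$. Equivalently $A\chi=E_\chi\cdot\chi^{-\sigma}$, so $A$ preserves each two-dimensional subspace $\mathrm{span}(\chi,\chi^{-\sigma})$ and acts there as $\left(\begin{smallmatrix} 0 & \overline{E_\chi} \\ E_\chi & 0 \end{smallmatrix}\right)$ — using $E_{\chi^{-\sigma}}=\overline{E_\chi}$, a consequence of $\chi(-1)=1$ together with $\lambda(\bar\gamma)=-\lambda(\gamma)$ — yielding eigenvalues $\pm|E_\chi|$. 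When $\chi=\chi^{-\sigma}$, equivalently when $\chi$ is trivial on $R^\times$ (the image of the norm $S^\times\to R^\times$, which is surjective), the subspace collapses and $\chi$ itself is an eigenvector with real eigenvalue $E_\chi$.

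The paired eigenvalues $\pm q^{\ell-k/2}$ for $k=1,\dots,\ell$ now follow from Theorem~\ref{thm: abs E}, which gives $|E_\chi|=q^{\ell-k/2}$ for characters of conductor level $k$. Among the norm-trivial characters contributing unpaired eigenvalues: the trivial character supplies the degree eigenvalue $q^\ell$, while a direct character-sum computation (restricting $\chi$ through the induced character on the residue field extension) gives $E_\chi=0$ for the remaining norm-trivial characters in case (ii) ($\ell\geq 2$), and $E_\chi=-1$ in case (i) ($\ell=1$). The absence of $-q^\ell$ reflects the non-bipartiteness of $\mathrm{Pl}(R)$. The $q=3$ exception arises because the count of character pairs producing $|E_\chi|=q^{\ell-1/2}$ contains a factor $q-3$ and hence vanishes precisely when $q=3$; equivalently, the quadratic character responsible for this eigenvalue becomes norm-trivial in that case and its contribution shifts into the unpaired class. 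The main technical obstacle is this Frobenius twist in the connection condition, which prevents individual characters from being eigenvectors and forces the two-dimensional-subspace bookkeeping, together with a careful check that the paired/unpaired dichotomy is compatible with the $\{\pm 1\}$-quotient and produces the multiplicities advertised in Remark~\ref{rem: multiplicities platonic R}.
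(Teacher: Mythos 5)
Your proposal follows the paper's own argument almost step for step: the unramified quadratic extension, the identification of $\mathrm{Pl}(R)$ with a trace/skew-linear-form graph on $S^\times/\{\pm1\}$, the observation that $A$ sends $\chi$ to a multiple of $\chi^*:=\overline{\chi}\circ\sigma$, the diagonal-vs.-$2\times 2$ block dichotomy governed by whether $\chi$ is trivial on $R^\times=\Nm(S^\times)$, and the invocation of Theorem~\ref{thm: abs E}. (Your $E_\chi$ is $\chi(2\xi)E(\chi)$ in the paper's normalization -- the paper calls this $c(\chi)$ -- so the absolute-value results transfer unchanged.)

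There is, however, a genuine error in your treatment of case (ii). You assert that the non-trivial norm-trivial characters all give $E_\chi=0$ when $\ell\geq 2$. That is false for those of level $\nu(\chi)=1$. For such $\chi$, a direct computation of the singular Eisenstein sum is available: the solutions of $\Tr(y)=0$ in $S^\times$ are exactly $R^\times\cdot\xi$, so $E_0(\chi)=|R^\times|\,\chi(\xi)=(q^\ell-q^{\ell-1})\chi(\xi)$, and combining this with $E_0(\chi)=-(q-1)E(\chi)$ and the phase $\chi(2\xi)$ yields $E_\chi=-q^{\ell-1}\chi(2j)=-q^{\ell-1}$, not $0$. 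The eigenvalue $0$ comes only from the norm-trivial characters with $\nu(\chi)\geq 2$, which exist precisely when $\ell\geq 2$ -- that is why $0$ appears in case (ii) and not in case (i). Your final eigenvalue list escapes unharmed because $-q^{\ell-1}=-q^{\ell-2/2}$ is already present among the paired eigenvalues at $k=2$ once $\ell\geq 2$, but the step as written is wrong and would give incorrect multiplicities (compare Remark~\ref{rem: multiplicities platonic R}, where the paper explicitly readjusts the multiplicity of $-q^{\ell-1}$ to account for this collision). Notice also that your case (i) value $E_\chi=-1$ is just the $\ell=1$ specialization of the same formula $-q^{\ell-1}$, so the two cases should be computed uniformly rather than by separate ad hoc claims. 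A smaller point: your ``equivalently'' gloss on the $q=3$ exception is off -- for $n=2$ the quadratic character of $S^\times$ is always trivial on $R^\times$, regardless of $q$, so nothing ``shifts'' between the paired and unpaired classes; the sole reason is that the count $\tfrac{1}{2}(q+1)(q-3)$ of paired even characters with $\nu(\chi_\res)=\nu(\chi)=1$ vanishes at $q=3$.
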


Part i) was first proved by Gunnells \cite[Thm.4.2]{Gun} in the original situation when $R=\Z/(p)$, and then by DeDeo, Lanphier, and Minei \cite[Thm.1]{DLM} in general. Their arguments are different, but they both rely on the representation theory of $\mathrm{PSL}_2$ over a finite field. Our approach to Theorem~\ref{thm: platonic R} avoids representation theory. The basic idea is the same as before: we trade $R^2$ for a quadratic extension of $R$. And again, we find that, in the new realization of the Platonic graph, eigenvalues are expressed in terms of Eisenstein sums. In the case of fields, our argument is quite elementary, and much simpler than the representation-theoretic approach of \cite{Gun, DLM}. But our approach also works in a more general context - that of finite valuation rings - where the heavy machinery of representation theory does not. Already the simple case of $R=\Z/(p^\ell)$ is very challenging from a representation-theoretic perspective.

Particularly interesting, however, is the Platonic graph over the ring $\Z/(N)$ where $N$ is an odd positive integer. As explained in \cite{Bro,BPP}, the graph $\mathrm{Pl}(\Z/(N))$ is the $1$-skeleton of a triangulation of the modular curve $X(N)$. In \cite{Gun}, Gunnells observes that the spectrum of $\mathrm{Pl}(\Z/(N))$ contains $-N/p$ for every prime $p$ dividing $N$. Our next goal is to compute the entire spectrum of the Platonic graph over  $\Z/(N)$. In fact, we succeed in doing so over any product $R_1\times\cdots\times R_n$ of finite valuation rings. A first guess would be that the eigenvalues of $\mathrm{Pl}(R_1\times\cdots\times R_n)$ are all the products $\alpha_{(1)}\cdots\alpha_{(n)}$, where each $\alpha_{(i)}$ runs over the eigenvalues of $\mathrm{Pl}(R_i)$ as determined in Theorem~\ref{thm: platonic R}. Unfortunately, the Platonic graph of a product of rings is not the tensor product of the Platonic graphs for the factor rings. It is, however, close enough, and the guess turns out to be partly correct. In general, an explicit list of eigenvalues for $\mathrm{Pl}(R_1\times\cdots\times R_n)$ is somewhat cumbersome to write down. This has to do with the two irregularities revealed by Theorem~\ref{thm: platonic R}. Firstly, $-1$, rather than $0$, is an eigenvalue when $R_i$ is a field. Secondly, there is an `eigenvalue loss' when $R_i$ has $q_i=3$.

So, for the sake of simplicity, we only state the extremal non-trivial eigenvalues of $\mathrm{Pl}(\Z/(N))$. Let us note here that $\mathrm{Pl}(\Z/(N))$ is regular of degree $N$. The case when $N$ is a power of $3$ is already addressed by Theorem~\ref{thm: platonic R}, so we focus on the remaining, generic case.

\begin{thm}\label{thm: spec PLN}
Let $N=3^\ell\: N'$, where $N'>1$ is not divisible by $3$. Let $p'>3$ be the smallest prime dividing $N'$. Then the extremal non-trivial eigenvalues of $\mathrm{Pl}(\Z/(N))$ are as follows:
\begin{align*}
\smallskip
\begin{tabular}{ r  c c }
& \textrm{largest} & \textrm{smallest}\\
\noalign{\smallskip} \hline                       
\noalign{\smallskip}  $\ell=0$\qquad\qquad & $N/\sqrt{p'}$ & $-N/\sqrt{p'}$ \\
 \noalign{\smallskip} $\ell=1$\qquad\qquad  &$N/\sqrt{p'}$ & $-N/\min\{3,\sqrt{p'}\}$ \\
\noalign{\smallskip}  $\ell\geq 2$\qquad\qquad  & $N/\min\{3,\sqrt{p'}\}$ & $-N/\min\{3,\sqrt{p'}\}$ \\
\end{tabular}
\smallskip
\end{align*}
\end{thm}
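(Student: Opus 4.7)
The plan is to reduce to a product of Platonic graphs over finite valuation rings via the Chinese Remainder Theorem, and then apply Theorem~\ref{thm: platonic R}. Writing $N=\prod_{p\mid N} p^{a_p}$, the ring isomorphism $\Z/(N)\cong\prod_p \Z/(p^{a_p})$ induces a graph isomorphism $\mathrm{Pl}(\Z/(N))\cong \mathrm{Pl}\big(\prod_p \Z/(p^{a_p})\big)$; each factor $\Z/(p^{a_p})$ is a finite valuation ring, whose Platonic spectrum is known from Theorem~\ref{thm: platonic R}.

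The first and main step is to describe how the spectrum of $\mathrm{Pl}(R_1\times\cdots\times R_n)$ is assembled from the spectra of the $\mathrm{Pl}(R_i)$. Because the Platonic graph of a product is not the tensor product of the factor Platonic graphs, I would work through the auxiliary signed graph $\tilde{\mathrm{Pl}}(R)$ on $R^{2,u}$ with adjacency $ad-bc=\pm 1$, which double-covers $\mathrm{Pl}(R)$ via the diagonal $\{\pm 1\}$ action. By CRT the adjacency matrix of $\tilde{\mathrm{Pl}}(\prod_i R_i)$ splits as $\bigotimes_i A^{(i)}_+ + \bigotimes_i A^{(i)}_-$, where $A^{(i)}_\pm$ is the $\det=\pm 1$ part of the adjacency matrix of $\tilde{\mathrm{Pl}}(R_i)$; the swap involution $(a,b)\mapsto(b,a)$ conjugates $A^{(i)}_+$ to $A^{(i)}_-$, providing a joint block-diagonalisation. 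Restricting to the $+1$-eigenspace of the diagonal $\{\pm 1\}$ action and tracking the signs, one expects each eigenvalue of $\mathrm{Pl}(\prod_i R_i)$ to take the form $\prod_i \alpha_{(i)}$ with $\alpha_{(i)}\in\mathrm{Spec}\,\mathrm{Pl}(R_i)$, subject to a parity constraint on the signs. The main obstacle is making this bookkeeping rigorous in the presence of the two ``irregularities'' of Theorem~\ref{thm: platonic R}: the eigenvalue $-1$ in place of $0$ when $R_i$ is a field, and the missing eigenvalues when $q_i=3$.

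With the product description in hand, the extremal non-trivial eigenvalues come from taking exactly one factor at a non-trivial eigenvalue and the remaining factors at their trivial degree eigenvalue, since any ``multi-nontrivial'' product is strictly dominated in absolute value. By Theorem~\ref{thm: platonic R}, the largest non-trivial eigenvalue of $\mathrm{Pl}(\Z/(p^{a_p}))$ in absolute value is $p^{a_p}/\sqrt{p}$ when $p>3$ and $p^{a_p}/3$ when $p=3,\,a_p\geq 2$, both signs realised; whereas for $p=3,\,a_p=1$ the only non-trivial eigenvalue is $-1$. The resulting candidate extremals for $\mathrm{Pl}(\Z/(N))$ are $\pm N/\sqrt{p}$ for each prime $p\mid N'$ (by selecting the $p$-factor non-trivially) and, from the $3$-part, $\pm N/3$ when $\ell\geq 2$ or only $-N/3$ when $\ell=1$. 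Optimising over these candidates, and noting that the smallest $\sqrt{p}$ over primes $p\mid N'$ is $\sqrt{p'}$, we recover the three rows of the stated table.
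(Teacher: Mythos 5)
Your high-level plan --- CRT to reduce to prime powers, invoke Theorem~\ref{thm: platonic R} for each factor, then assemble the product spectrum and optimise --- coincides with the paper's. The gap is in the middle step, where you describe the product spectrum. The paper works with the single operator $Df(a,b)=\sum_{ad-bc=1}f(c,d)$ on $\mathcal F(R^{2,u})$, which is genuinely multiplicative under ring products, $D=D_1\otimes\cdots\otimes D_n$, and then decomposes $D=D^+\oplus D^-$ under the diagonal $\{\pm1\}$ action, giving
\begin{align*}
\mathrm{Spec}\,\mathrm{Pl}\big(\textstyle\prod_i R_i\big)\;=\;\mathrm{Spec}\,D^+\;=\;\bigcup_{\sigma_1\cdots\sigma_n=+1}\mathrm{Spec}\,D_1^{\sigma_1}\cdots\mathrm{Spec}\,D_n^{\sigma_n}.
\end{align*}
Your claim that ``each eigenvalue of $\mathrm{Pl}(\prod_i R_i)$ takes the form $\prod_i\alpha_{(i)}$ with $\alpha_{(i)}\in\mathrm{Spec}\,\mathrm{Pl}(R_i)$, subject to a parity constraint on the signs'' is not this formula, and it is not correct: the factor assigned odd parity must contribute an eigenvalue of $D_i^-$, which is \emph{not} $\mathrm{Spec}\,\mathrm{Pl}(R_i)$. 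By Lemma~\ref{lem: D+D-}, $\mathrm{Spec}\,D_i^-$ consists of $0$ and purely imaginary numbers $\pm\mathrm i\,q_i^{\ell_i-k/2}$; crucially, when $q_i=3$ its largest modulus $3^{\ell_i-1/2}$ strictly exceeds every non-trivial modulus in $\mathrm{Spec}\,\mathrm{Pl}(R_i)$ (where $\pm 3^{\ell_i-1/2}$ is missing). This is exactly the phenomenon the paper singles out: the $q=3$ eigenvalue ``loss'' in Theorem~\ref{thm: platonic R} partly reappears in products via $D^-$. Since Theorem~\ref{thm: platonic R} records only $\mathrm{Spec}\,D_i^+$, your plan cannot be executed from that theorem alone; you need Lemma~\ref{lem: D+D-}. (As a side remark, the swap involution $(a,b)\mapsto(b,a)$ is a false start: it conjugates $A_+$ to $A_-$, hence shows they are isospectral, but it does not block-diagonalise either operator --- the decomposition that does the work is the parity decomposition you mention immediately after.)

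Consequently, your final step --- ``any multi-nontrivial product is strictly dominated in absolute value'' --- is asserted over too small a candidate set. The conclusion is in fact correct, but it requires checking that products with at least two odd factors, notably one with $q_i=3$ contributing modulus $3^{\ell_i-1/2}$ and another with $q_{i'}=p'$ contributing $(p')^{\ell_{i'}-1/2}$, give only $N/\sqrt{3p'}$, which is smaller than both $N/3$ and $N/\sqrt{p'}$. Your proposal neither states nor verifies this comparison, so as written the argument is incomplete precisely at the point where the $q=3$ irregularity bites.
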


Recall that, for rather circumstantial reasons, a $d$-regular graph is said to be `Ramanujan' if all its non-trivial adjacency eigenvalues lie in the interval $[-2\sqrt{d-1}, 2\sqrt{d-1}]$. As already noted by Gunnells \cite{Gun}, the graph $\mathrm{Pl}(\Z/(N))$ is usually not Ramanujan for a composite $N$. The same conclusion is reached by Lanphier and Rosenhouse \cite[Thm.3.ii]{LR} with a different approach. Theorem~\ref{thm: spec PLN} and part ii) of Theorem~\ref{thm: platonic R} give a complete answer: the only composite odd numbers $N$ for which $\mathrm{Pl}(\Z/(N))$ is a Ramanujan graph are $N=9,\:15,\:21,\:27,\:33$.

In summary, the spectrum of the Platonic graph $\Z/(N))$ depends, in a somewhat intricate way, on the prime factorization of $N$. At any rate, the following holds.

\begin{cor}
Let $N>1$ be an odd integer and let $p$ be the smallest prime dividing $N$. Then the largest non-trivial eigenvalue of $\mathrm{Pl}(\Z/(N))$ is at most $N/\sqrt{p}$, and so the isoperimetric constant of $\mathrm{Pl}(\Z/(N))$ satisfies
\begin{align*}
\iso\big(\mathrm{Pl}(\Z/(N))\big)\geq \frac{N}{2}\bigg(1-\frac{1}{\sqrt{p}}\bigg).
\end{align*}
\end{cor}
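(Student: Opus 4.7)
The plan reduces the corollary to two ingredients: (i) the bound $|\alpha|\leq N/\sqrt{p}$ for every non-trivial adjacency eigenvalue $\alpha$ of $\mathrm{Pl}(\Z/(N))$; and (ii) the standard spectral lower bound for edge expansion, which for a $d$-regular graph reads $\iso(G)\geq (d-\alpha_2)/2$. Since $\mathrm{Pl}(\Z/(N))$ is $N$-regular, combining (i) and (ii) immediately yields the stated inequality. The work is therefore entirely concentrated in (i), and is a short case analysis based on the eigenvalue tables in Theorems~\ref{thm: platonic R} and~\ref{thm: spec PLN}.

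Write $N=3^\ell N'$ with $\gcd(N',3)=1$. If $p>3$, then $\ell=0$ and $N'=N$ has smallest prime divisor $p'=p$, and Theorem~\ref{thm: spec PLN} gives extremal non-trivial eigenvalues $\pm N/\sqrt{p}$, so (i) holds with equality. If $p=3$ and $N'=1$, i.e. $N=3^\ell$ is a power of $3$, then Theorem~\ref{thm: platonic R}(ii) with $q=3$ shows that the largest non-trivial eigenvalue in absolute value is $q^{\ell-1}=N/3$, which is $\leq N/\sqrt{3}=N/\sqrt{p}$. The remaining case is $p=3$ together with $N'>1$, so Theorem~\ref{thm: spec PLN} applies with $p'>3$ and $\ell\geq 1$; inspection of the corresponding rows of the table shows that each extremal non-trivial eigenvalue has absolute value at most $N/\min\{3,\sqrt{p'}\}$. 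Since $p'>3$ gives $\sqrt{p'}>\sqrt{3}$, and also $3>\sqrt{3}$, we have $\min\{3,\sqrt{p'}\}\geq \sqrt{3}$, whence $|\alpha|\leq N/\sqrt{3}=N/\sqrt{p}$, as required.

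I do not anticipate any substantive obstacle: all of the spectral work is already carried out in the previous two theorems, and the only arithmetic input is the elementary inequality $\min\{3,\sqrt{p'}\}\geq \sqrt{3}$ for any prime $p'>3$, together with the bookkeeping to identify which theorem applies in each case. With (i) in hand, substituting $d=N$ and $\alpha_2\leq N/\sqrt{p}$ into the spectral lower bound yields $\iso\big(\mathrm{Pl}(\Z/(N))\big)\geq \tfrac{N}{2}\bigl(1-1/\sqrt{p}\bigr)$.
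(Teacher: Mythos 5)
Your proposal is correct and takes essentially the approach the paper implicitly intends (the paper gives no separate proof of this corollary, treating it as a direct consequence of Theorems~\ref{thm: platonic R} and~\ref{thm: spec PLN} together with the bound $\iso(X)\geq\tfrac{1}{2}(d-\alpha_2)$). One small bookkeeping slip: when $N=3^\ell$ with $\ell=1$, i.e.\ $N=3$, the ring $\Z/(3)$ is a field, so Theorem~\ref{thm: platonic R}(i) rather than (ii) applies; the only non-trivial eigenvalue is then $-1$, which still satisfies $|-1|\leq 1=q^{\ell-1}=N/3\leq N/\sqrt{3}$, so your formula happens to cover this case even though the citation should be to part (i).
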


The lower bound we have obtained improves the one obtained in \cite[Thm.3.i]{LR} by combinatorial arguments. We believe that spectral methods can also be used for obtaining upper bounds, but we have not pursued this idea to its very end.

\smallskip
\textbf{Acknowledgements.} I am grateful to Noga Alon for some useful comments.


\section{Eisenstein sums over finite fields}\label{sec: eisenstein, field case}
Let $\F$ be a field with $q$ elements, where $q$ is odd, and let $\K/\F$ be a field extension of degree $n$. The \emph{trace} $\Tr:\K\to \F$ is the map given by $\Tr(s)=\sum_{i=0}^{n-1} s^{q^i}$. What is relevant is not so much the formula, but rather the following property: the trace $\Tr:\K\to \F$ is a surjective $\F$-linear map. Much less important for us, but still mentioned herein, is the multiplicative sibling of the trace. The \emph{norm} $\Nm:\K\to \F$ is given by $\Nm(s)=\prod_{i=0}^{n-1} s^{q^i}$, and it defines a surjective homomorphism $\Nm: \K^*\to \F^*$.

An \emph{Eisenstein sum} for the extension $\K/\F$ is a restricted character sum given by
\begin{align*}
E(\chi)=\sum_{\Tr(s)=1} \chi(s)
\end{align*}
where $\chi$ is a character of the multiplicative group $\K^*$. Equally important in what follows is the `singular' Eisenstein sum
\begin{align*}
E_0(\chi)=\sum_{\substack{\Tr(s)=0 \\ s\neq 0}} \chi(s).
\end{align*}
For the trivial character $\chi_0$ of $\K^*$, we find that $E(\chi_0)=q^{n-1}$ and $E_0(\chi_0)=q^{n-1}-1$. Eisenstein sums defined by non-trivial characters are difficult to compute. Fortunately, all we need to know for the purposes of this paper is their absolute value. 

\begin{thm}\label{thm: field E}
Let $\chi$ be a non-trivial character of $\K^*$.
\begin{itemize}
\item[i)] If $\chi$ is non-trivial on $\F^*$, then:
\begin{align*}
|E(\chi)|=q^{(n-1)/2}, \qquad E_0(\chi)=0.
\end{align*}
\item[ii)] If $\chi$ is trivial on $\F^*$, then:
\begin{align*}
|E(\chi)|=q^{n/2-1}, \qquad |E_0(\chi)|=(q-1)\: q^{n/2-1}.
\end{align*}
Furthermore, $E_0(\chi)=-(q-1)\: E(\chi)$.
\end{itemize}
\end{thm}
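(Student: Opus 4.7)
My plan is to evaluate both Eisenstein sums by Fourier inversion on $\F$, which reduces them to products of a Gauss sum over $\K$ and a simpler sum over $\F$. Pick any nontrivial additive character $\psi_0$ of $\F$. Since $\Tr:\K\to\F$ is surjective and $\F$-linear, the map $\psi(s)=\psi_0(\Tr(s))$ is a nontrivial additive character of $\K$. The indicators of $\Tr(s)=1$ and $\Tr(s)=0$ can be written as
\begin{align*}
\mathbf{1}_{\Tr(s)=c}=\frac{1}{q}\sum_{t\in \F}\psi_0\bigl(t(\Tr(s)-c)\bigr)=\frac{1}{q}\sum_{t\in\F}\psi_0(-tc)\,\psi(ts).
\end{align*}

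Inserting this into $E(\chi)$ and $E_0(\chi)$ and swapping sums, the $t=0$ contribution vanishes because $\chi$ is nontrivial on $\K^*$, and for $t\in\F^*$ the substitution $s\mapsto s/t$ gives
\begin{align*}
\sum_{s\in\K^*}\chi(s)\,\psi(ts)=\bar\chi(t)\,G(\chi),\qquad G(\chi):=\sum_{s\in\K^*}\chi(s)\,\psi(s).
\end{align*}
Thus I obtain the clean identities
\begin{align*}
E(\chi)=\frac{G(\chi)}{q}\sum_{t\in\F^*}\bar\chi(t)\,\psi_0(-t),\qquad E_0(\chi)=\frac{G(\chi)}{q}\sum_{t\in\F^*}\bar\chi(t).
\end{align*}
At this point I invoke the standard estimate $|G(\chi)|=q^{n/2}$ for a nontrivial multiplicative character paired with a nontrivial additive character on a field of size $q^n$.

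It now remains to evaluate the $\F$-sums according to whether $\chi$ is trivial on $\F^*$. In case (i), $\bar\chi|_{\F^*}$ is a nontrivial character of $\F^*$, so $\sum_{t\in\F^*}\bar\chi(t)=0$, giving $E_0(\chi)=0$, and $|\sum_{t\in\F^*}\bar\chi(t)\psi_0(-t)|=q^{1/2}$ is the absolute value of a Gauss sum over $\F$; hence $|E(\chi)|=q^{-1}\cdot q^{n/2}\cdot q^{1/2}=q^{(n-1)/2}$. In case (ii), $\bar\chi$ is identically $1$ on $\F^*$, so $\sum_{t\in\F^*}\bar\chi(t)=q-1$ and $\sum_{t\in\F^*}\psi_0(-t)=-1$; these immediately give $|E(\chi)|=q^{n/2-1}$, $|E_0(\chi)|=(q-1)\,q^{n/2-1}$, and from the two identities above also the relation $E_0(\chi)=-(q-1)\,E(\chi)$.

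There is no real obstacle here beyond bookkeeping: the only nontrivial input is the absolute value of the Gauss sum $G(\chi)$ over $\K$, a classical fact I will cite. Everything else is Fourier inversion on $\F$ and a case split based on the behavior of $\chi$ on the subfield $\F^*$.
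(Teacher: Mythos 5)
Your proof is correct, and its essence is the same as the paper's: relate Eisenstein sums to Gauss sums over both $\K$ and $\F$, then invoke $|G|=\sqrt{|\Bbbk|}$. What differs is the packaging. The paper decomposes the Gauss sum $G(\psi^{\ind},\chi)=\sum_{s\in\K^*}\psi(\Tr s)\,\chi(s)$ over the fibres of the trace map and so arrives at a single linear relation $G(\psi^{\ind},\chi)=E_0(\chi)+G(\psi,\chi_{\res})\,E(\chi)$, which it then exploits for several choices of $\psi$ (trivial $\psi$ to get the $E_0$ information, nontrivial $\psi$ to extract $|E(\chi)|$). You instead expand the indicator of $\{\Tr(s)=c\}$ by Fourier inversion on $\F$ and land immediately on two closed-form identities, $E(\chi)=\tfrac{G(\chi)}{q}\sum_{t\in\F^*}\bar\chi(t)\psi_0(-t)$ and $E_0(\chi)=\tfrac{G(\chi)}{q}\sum_{t\in\F^*}\bar\chi(t)$. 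These are the same computation viewed dually: your factor $\sum_{t\in\F^*}\bar\chi(t)\psi_0(-t)$ equals $\overline{G(\psi_0,\chi_{\res})}$, so your formula for $E(\chi)$ is precisely the paper's relation after solving and using $G\,\overline{G}=q$. Your version is arguably cleaner for the field case since both sums are evaluated at once with no case-by-case solving. The fibre-decomposition version, though, is what scales to finite valuation rings (Theorem~\ref{thm: abs E}): there the trace fibres must be stratified by valuation, the analogue of your $\F^*$-sum splits into several Gauss sums of different conductors, and organizing the computation as a triangular linear system in the higher Eisenstein sums $E(\chi,\pi^i)$ is considerably more transparent than a direct Fourier inversion would be.
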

This is a known fact, cf. \cite[pp.389--391]{BEW}. A proof can be found in Section~\ref{sec: technical proof}.

\section{Eigenvalues of unimodular graphs over finite fields}\label{sec: eigenvalues, explained}
In this section, we compute the adjacency spectra of $\UM(F^n)$ and $\UM_0(F^n)$ in terms of Eisenstein sums for a degree $n$ extension of $\F$. Recall, the two graphs are defined as follows: $\UM(F^n)$ is the bipartite graph on two copies of $(\F^n)^*$, in which vertices $a_\bl$ and $b_\wh$ are connected if $a\cdot b=1$, while $\UM_0(F^n)$ is the bipartite graph on two copies of the projective space $(\F^n)^*/\F^*$, in which vertices $[a]_\bl$ and $[b]_\wh$ are connected if $a\cdot b=0$. 

These two bipartite graphs, as well as their generalizations to other rings, have the following form. The vertex set consists of two copies, $V_\bl$ and $V_\wh$, of the same set $V$, and two vertices $a_\bl$ and $b_\wh$ are connected when $a$ is suitably related to $b$. The size of $V$ is the \emph{half-size} of the resulting bipartite graph. We work with the \emph{reduced} adjacency operator $A$, mapping complex-valued functions on $V_\wh$ to complex-valued functions on $V_\bl$. The adjacency eigenvalues are then $\pm\: \sqrt{\mu}$ for $\mu$ running over the eigenvalues of $A^*A$, equivalently, of $AA^*$. Both $\sqrt{\mu}$ and $-\sqrt{\mu}$ appear with multiplicity equal to the multiplicity of $\mu$.

\begin{proof}[Proof of Theorem~\ref{thm: uni-spectra F}]
Let $\K$ be an extension of $\F$ of degree $n$. Via an $\F$-linear isomorphism between $\F^n$ and $\K$, we may view the dot product as a non-degenerate $\F$-bilinear form $\beta$ on $\K$. The surjectivity of the trace map implies that there is a unique $\F$-linear isomorphism $\phi: \K\to \K$ such that $\beta(x,y)=\Tr(\phi(x)\:y)$ for all $x,y\in \K$. This allows us to recast the graph $\UM(F^n)$ as the bipartite graph on two copies of $\K^*$, in which vertices $x_\bl$ and $y_\wh$ are connected if $\Tr(\phi(x)\:y)=1$. After a relabeling of, say, the black vertices, we may assume that $\phi$ is the identity map. The resulting bipartite graph on two copies of $\K^*$, in which vertices $x_\bl$ and $y_\wh$ are connected if $\Tr(xy)=1$, is denoted $\Tr(K/F)$. In $\Tr(K/F)$, the neighbours of a vertex $x_\bl$ are $(s/x)_\wh$, where $s\in \K$ runs over all roots of $\Tr(s)=1$. For each character $\chi$ of $\K^*$ we have
\begin{align*}
A\chi(x)=\sum_{\Tr(s)=1} \chi(s/x)=\overline{\chi}(x) \sum_{\Tr(s)=1} \chi(s),
\end{align*}
that is,
\begin{align*}
A\chi=E(\chi)\: \overline{\chi}.
\end{align*}

It follows that the eigenvalues of $\Tr(K/F)$ are $\pm\: |E(\chi)|$, where $\chi$ runs over the characters of $\K^*$. The explicit values are given by Theorem~\ref{thm: field E}.

Similarly, $\UM_0(F^n)$ is isomorphic to the bipartite graph on two copies of $\K^*/\F^*$, in which vertices $[x]_\bl$ and $[y]_\wh$ are connected when $\Tr(xy)=0$. We denote this graph by $\Tr_0(K/F)$. The eigenvalues of $\Tr_0(K/F)$ are $\pm\: |PE_0(\omega)|$, where the `projective' singular Eisenstein sum is given by
\begin{align*}
PE_0(\omega)=\sum_{\substack{\Tr(s)=0\\ s\neq 0}} \omega[s],
\end{align*} 
and $\omega$ runs over the characters of $\K^*/\F^*$. For such an $\omega$, let $\chi_\omega$ be the character of $\K^*$ obtained by composition with the quotient map $K^*\to K^*/F^*$. Then 
\begin{align*}
E_0(\chi_\omega)=\sum_{\substack{\Tr(s)=0\\ s\neq 0}} \chi_\omega(s)=|F^*|\: PE_0(\omega).
\end{align*} 
In conclusion, the eigenvalues of $\Tr_0(K/F)$ are $\pm\: |E_0(\chi)|/(q-1)$, where $\chi$ runs over the characters of $\K^*$ that are trivial on $\F^*$.
\end{proof}

A crucial fact, used implicitly throughout this text, is that characters form a basis for the space of complex-valued functions on an abelian group.

\begin{rem}\label{rem: uni-multi F}
To determine the eigenvalue multiplicities, we do a character count.

For $\Tr(K/F)$, the trivial eigenvalues $\pm\: q^{n-1}$ come from the trivial character, so they each have multiplicity $1$. The eigenvalues $\pm\: q^{n/2-1}$ come from the non-trivial characters of $\K^*$ that are trivial on $\F^*$, and there are exactly $|\K^*|/|\F^*|-1=(q^n-q)/(q-1)$ such characters. The eigenvalues $\pm\: q^{(n-1)/2}$ come from the characters of $\K^*$ that are non-trivial on $\F^*$, and their number is $|\K^*|-|\K^*|/|\F^*|=(q-2)(q^n-1)/(q-1)$.

Consider now the graph $\Tr_0(K/F)$. The trivial eigenvalues $\pm\: (q^{n-1}-1)/(q-1)$ come from the trivial character, so they each have multiplicity $1$; the eigenvalues $\pm\: q^{n/2-1}$ come from the non-trivial characters of $\K^*$ that are trivial on $\F^*$, so they each have multiplicity $(q^n-q)/(q-1)$.
\end{rem}

\begin{rem}
The graph $\Tr(K/F)$, appearing in the previous proof, is a trace analogue of the norm graph introduced by Koll\'ar, R\'onyai, and Szab\'o in \cite{KRS}. In its bipartite form, the norm graph $\mathrm{Nm}(K/F)$ is the bipartite graph on two copies of $\K$, in which vertices $x_\bl$ and $y_\wh$ are connected when $\Nm(x+y)=1$. The graph $\mathrm{Nm}(K/F)$ has half-size $q^n$ and degree $(q^n-1)/(q-1)$. Its eigenvalues are
\begin{align*}
\pm \bigg|\sum_{\Nm(s)=1} \psi(s)\bigg|
\end{align*} 
for $\psi$ running over the additive characters of $\K$ (cf. Alon and Pudl\'ak~\cite{AP}). Unlike the case of Eisenstein sums, it is not known how to compute the above absolute values. Elementary arguments \cite[Ch.II, Thm.3D]{Sch} give an upper bound of $q^{n/2}$ whenever $\psi$ is non-trivial (cf. \cite[Lem.2.3]{AP}).

Projective relatives of norm graphs were considered by Alon, R\'onyai, and Szab\'o in \cite{ARS}. Their eigenvalues turn out to be the signed absolute values of certain Gauss sums \cite{Sz, AR}, and these can be computed explicitly. Although not directly related, the norm graphs - the original ones as well as the projective ones - were a source of inspiration for this paper.
\end{rem}

\section{Algebraic preliminaries on finite valuation rings}
\subsection{Finite local rings, extensions, traces} Let $R$ be a finite local ring with maximal ideal $\pi$. Then $R\setminus \pi=R^\times$, the group of units of $R$. The quotient $F:=R/\pi$ is the \emph{residue field} of $R$. The ring homomorphism $R\to F$ induces a surjective group homomorphism $R^\times \to F^*$, with kernel $1+\pi$.

Certain aspects of the extension theory for finite local rings are of crucial importance to us. We outline the bare minimum, and we refer to \cite[Chapter 4]{BF} for more details. The rough idea is to build extensions of finite local rings by lifting extensions of residue fields. Let $R$, $\pi$, $F$ be as above, and let $K$ be an extension of $F$ of degree $n$. 

\begin{align*}
\xymatrix{
& & K\\
\pi \ar@{->}[r]  &R \ar@{->>}[r] & F\ar@{-}[u]
}\qquad\qquad\qquad
\xymatrix{
\pi S\ar@{->}[r] &S\ar@{->>}[r] & K\\
\pi \ar@{->}[r] \ar@{-}[u] &R \ar@{->>}[r] \ar@{-}[u]& F\ar@{-}[u]
}
\end{align*}

Write $K=F[X]/(\bar{f})$, where $\bar{f}\in F[X]$ is a monic irreducible polynomial of degree $n$. Let $f\in R[X]$ be a monic polynomial of degree $n$ mapping to $\bar{f}$ via $R[X]\to F[X]$. It turns out that $f$ is irreducible (such lifts are said to be \emph{basic irreducible} polynomials). The quotient ring
\begin{align*}
S=R[X]/(f)=R[\xi],
\end{align*}
where $\xi$ denotes the image of $X\in R[X]$ and so $f(\xi)=0$, has the following properties: $S$ is an extension of $R$, $S$ is a finite local ring with maximal ideal $\pi S$, and $S$ has residue field $K$. In what follows, we refer to this extension $R\subseteq S$ as a \emph{standard extension of degree $n$}.

Next, we define the trace map for the extension $R\subseteq S$. This could be done in terms of the Galois group of the extension, but that would require some further theoretical details. A simpler approach is to view $S$ as a free $R$-module of rank $n$, with basis $\{\xi^i: i=0,\dots,n-1\}$. Multiplication by $s\in S$ is an $R$-linear map $S\to S$, and we let $M_s$ be the corresponding $n\times n$ matrix with entries in $R$. The trace $\Tr_{S/R}: S\to R$ is defined by mapping $s\in S$ to the trace of the matrix $M_s$. The same procedure at the level of residue fields recovers the trace map $\Tr_{K/F}:K\to F$ and so the diagram
\begin{align*}
\xymatrix{
S\ar@{->>}[r] \ar@{->}[d]_{\Tr_{S/R}}& K\ar@{->}[d]^{\Tr_{K/F}}\\
R \ar@{->>}[r] & F
}
\end{align*}
is commutative.

\begin{prop}
Let $R\subseteq S$ be a standard extension of finite local rings. Then the trace $\Tr_{S/R}: S\to R$ is $R$-linear, surjective, and it maps $\pi S$ to $\pi$.
\end{prop}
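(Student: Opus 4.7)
The plan is to verify the three properties in turn, with the first two being routine and the third requiring the most care.

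First I would establish $R$-linearity directly from the definition. The assignment $s \mapsto M_s$ is $R$-linear (in fact, an $R$-algebra homomorphism $S \to M_n(R)$), because multiplication by $rs + s'$ on the free $R$-module $S$ is $r M_s + M_{s'}$. Composing with the $R$-linear trace functional on $M_n(R)$ gives the $R$-linearity of $\Tr_{S/R}$. For the property $\Tr_{S/R}(\pi S)\subseteq \pi$, I would use $R$-linearity: any $s\in\pi S$ has the form $s=\pi t$ for some $t\in S$, so
\begin{align*}
\Tr_{S/R}(s)=\Tr_{S/R}(\pi t)=\pi\:\Tr_{S/R}(t)\in \pi.
\end{align*}

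The surjectivity is where the work lies. Here I would exploit the commutative diagram with $\Tr_{K/F}$ already stated in the excerpt. That diagram is justified by noting that the reduction mod $\pi$ of the basis $\{1,\xi,\ldots,\xi^{n-1}\}$ of $S$ is the basis $\{1,\bar\xi,\ldots,\bar\xi^{n-1}\}$ of $K$ (since $\bar f$ is the minimal polynomial of $\bar\xi$ over $F$), and the matrix of multiplication by $\bar s$ on $K$ is the entrywise reduction of $M_s$. Now $\Tr_{K/F}$ is surjective because the extension $K/F$ of finite fields is separable, so given any $r\in R$ with image $\bar r\in F$, there exists $\bar s\in K$ with $\Tr_{K/F}(\bar s)=\bar r$; lifting $\bar s$ to some $s\in S$ and using commutativity yields $\Tr_{S/R}(s)\equiv r\pmod{\pi}$. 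Hence $\Tr_{S/R}(S)+\pi=R$.

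The final step is to pass from this congruence to full surjectivity. Let $I=\Tr_{S/R}(S)\subseteq R$; this is an $R$-submodule of $R$, i.e.\ an ideal, by $R$-linearity. From $I+\pi=R$ we obtain $1=i+x$ with $i\in I$, $x\in\pi$. Since $R$ is local with maximal ideal $\pi$, the element $1-x=i$ lies outside $\pi$ and is therefore a unit, forcing $I=R$. The main obstacle, if there is one, is really just this packaging: recognizing that surjectivity of the residue trace combined with the local structure of $R$ suffices (a mild form of Nakayama's lemma), rather than needing to lift elements iteratively through the filtration $R\supset\pi\supset\pi^2\supset\cdots$.
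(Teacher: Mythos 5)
Your proof is correct and takes essentially the same route as the paper: both reduce surjectivity to the nonvanishing/surjectivity of $\Tr_{K/F}$ via the commutative square with the residue fields, and then invoke that in the local ring $R$ an ideal not contained in the maximal ideal must be all of $R$. One small notational slip worth fixing: at the level of this proposition $\pi$ denotes the maximal ideal of a general finite local ring, which need not be principal, so an element of $\pi S$ should be written as a finite sum $\sum r_i t_i$ with $r_i\in\pi$, $t_i\in S$ rather than as $\pi t$; the conclusion $\Tr_{S/R}(\pi S)\subseteq\pi$ then follows termwise by $R$-linearity exactly as in the paper.
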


\begin{proof}
$R$-linearity of $\Tr_{S/R}$ is obvious from the definition. Elements of $\pi S$ are finite sums of the form $\sum r_i s_i$, where $r_i\in \pi$ and $s_i\in S$, so $\Tr_{S/R}(\sum r_i s_i)=\sum r_i \:\Tr_{S/R}(s_i)\in \pi$. To prove surjectivity, it suffices to ensure that the image $\Tr_{S/R}(S)\subseteq R$ contains a unit of $R$. Assume that this is not the case. Then $\Tr_{S/R}(S)\subseteq \pi$, so the image of $\Tr_{S/R}(S)$ in the residue field $F$ is $\{0\}$. This means, by the commutativity of the above diagram, that $\Tr_{K/F}$ is identically $0$, a contradiction.
\end{proof}

In what follows, we write $\Tr: S\to R$ for the trace map.

\subsection{Finite valuation rings} Recall from the Introduction that the maximal ideal of a finite valuation ring $R$ is of the form $(\pi)$, where $\pi$ is now an element of $R$, and that we have a filtration by ideals
\begin{align*}
R\supset (\pi) \supset (\pi^2)\supset\cdots \supset (\pi^\ell)=0 
\end{align*}
where $\ell$ denotes the nilpotency degree of $\pi$. For notational reasons, it is sometimes useful to think of $R$ as $(\pi^0)$. There is a natural valuation 
\begin{align*}
\nu: R\to \{0,1,\dots,\ell\}
\end{align*} defined as follows: $\nu(0)=\ell$, and for $r\neq 0$ we set $\nu(r)=k$ if $r\in (\pi^k)\setminus (\pi^{k+1})$. Note that $\nu(r)=k$ if and only if $r=\pi^ku$ for some unit $u\in R^\times$, and that there are precisely $|(\pi^{\ell-k})|$ such representations of $r$.

Each abelian group $(\pi^k)/(\pi^{k+1})$ is a one-dimensional linear space over the residue field $F=R/(\pi)$, so its size is also $q=|F|$. It follows that
\begin{align*}
|(\pi^{k})|=q^{\ell-k}, \qquad k=0,1,\dots,\ell.
\end{align*}
In particular, $|R|=q^\ell$, $|(\pi)|=q^{\ell-1}$, and $|R^\times|=|R|-|(\pi)|=q^\ell-q^{\ell-1}$.

Now let $R\subseteq S$ be a standard extension of degree $n$. Then $S$ is also a finite valuation ring, with uniformizer $\pi$, and its residue field $K$ has size $q^n$. In the following proposition, we establish some useful properties enjoyed by the trace of the extension $R\subseteq S$.

\begin{prop}\label{prop: standard bilinear form}
 Let $R\subseteq S$ be a standard extension of finite valuation rings. Then the trace $\Tr: S\to R$ has the following properties.
\begin{itemize}
\item[i)] $\Tr$ maps $\pi^kS$ onto $\pi^kR$ for each $k=0,\dots,\ell$. 
\item[ii)] If $t\in S$ satisfies $\Tr(ts)=0$ for all $s\in S$, then $t=0$.
\item[iii)] Let $T: S\to R$ be an $R$-linear map. Then there exists a unique $t\in S$ such that $T(s)=\Tr(ts)$ for all $s\in S$.
\item[iv)] Let $\beta$ be a non-degenerate $R$-bilinear form on $S$. Then there exists a unique $R$-linear isomorphism $\phi: S\to S$ such that $\beta(t,s)=\Tr(\phi(t)\:s)$ for all $t,s\in S$.
\end{itemize}
\end{prop}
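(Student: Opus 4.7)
The plan is to prove the four parts in sequence, with each building on the previous ones. The central tool is the previous proposition, which asserts that $\Tr : S \to R$ is $R$-linear and surjective.

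For \textbf{part i)}, I would deduce both inclusions directly from $R$-linearity. The inclusion $\Tr(\pi^k S) \subseteq \pi^k R$ is immediate: $\Tr(\pi^k s) = \pi^k \Tr(s) \in \pi^k R$. For the reverse, any $r \in \pi^k R$ has the form $r = \pi^k r'$, and by surjectivity of $\Tr$ on $S$ we can find $s' \in S$ with $\Tr(s') = r'$, whence $\pi^k s' \in \pi^k S$ has trace $r$.

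For \textbf{part ii)}, the key is the valuation structure of $S$. Suppose $t \neq 0$ and write $t = \pi^k u$ with $u \in S^\times$ and $0 \leq k < \ell$. As $s$ runs over $S$, the product $us$ also runs over $S$, so $\Tr(tS) = \pi^k \Tr(uS) = \pi^k \Tr(S) = \pi^k R$ by the previous proposition. Since $k < \ell$, this is nonzero, contradicting $\Tr(ts) = 0$ for all $s$.

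\textbf{Part iii)} follows from a rank-counting argument. Consider the $R$-linear map
\begin{align*}
\Phi : S \to \mathrm{Hom}_R(S, R), \qquad \Phi(t)(s) = \Tr(ts).
\end{align*}
By part ii), $\Phi$ is injective. But $S$ is a free $R$-module of rank $n$ (with basis $\{\xi^i : 0 \leq i < n\}$), hence $\mathrm{Hom}_R(S, R) \cong R^n$ as an $R$-module. Both sides are finite sets of size $q^{n\ell}$, so $\Phi$ is bijective, which is exactly the existence and uniqueness of $t$.

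Finally, \textbf{part iv)} follows from iii) applied pointwise. Fix $t \in S$; then $s \mapsto \beta(t,s)$ is $R$-linear, so by iii) there is a unique $\phi(t) \in S$ with $\beta(t,s) = \Tr(\phi(t)\:s)$ for all $s$. The map $\phi : S \to S$ is $R$-linear because of the bilinearity of $\beta$ combined with the uniqueness in iii). Non-degeneracy of $\beta$ on the left (meaning $\beta(t,\cdot) = 0$ forces $t = 0$) shows $\phi$ is injective, and since $S$ is finite this makes $\phi$ bijective.

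The only mildly subtle point is part ii): one has to use the full ideal filtration of $S$ to locate $t$ in a layer $\pi^k S^\times$ and then pull the unit through using $\Tr(S) = R$. Everything else is a formal consequence once we observe that $\mathrm{Hom}_R(S, R)$ has the same cardinality as $S$, so injective maps between them are automatically bijective.
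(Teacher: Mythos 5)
Your proof is correct and follows essentially the same route as the paper's: part i) by linearity plus surjectivity, part ii) by locating $t$ in the valuation filtration and reducing to part i), part iii) by injectivity of $t\mapsto\Tr(t\cdot)$ together with a cardinality count against $\mathrm{Hom}_R(S,R)\cong R^n$, and part iv) as a formal consequence. The only cosmetic difference is in part ii), where you write $t=\pi^ku$ with $u\in S^\times$ while the paper phrases the same fact as $tS=\pi^kS$ for some $k$.
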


\begin{proof}
i) We already know that $\Tr(\pi^kS)\subseteq \pi^kR$. Now let $\pi^kr$, where $r\in R$, be an arbitrary element of $\pi^kR$. Surjectivity of $\Tr:S\to R$ provides an $s\in S$ such that $\Tr(s)=r$. Then $\pi^ks\in \pi^kS$ satisfies $\Tr( \pi^k s)= \pi^k \Tr(s)= \pi^kr$.

ii) Assume that $\Tr$ vanishes on $tS$, the ideal generated by $t\in S$. We have $tS=\pi^kS$ for some $k=0,\dots,\ell$, and part i) forces $k=\ell$. Therefore $t=0$.

iii) For each $t\in S$, we have an $R$-linear map $\Tr_t: S\to R$ given by $s\mapsto \Tr(ts)$. By part ii), the assignment $t\mapsto \Tr_t$ defines an injective map from $S$ to the dual $S^*=\mathrm{Hom}_R(S,R)$. But $S$ and $S^*$ have the same size, as $S$ is a free $R$-module, so every element of $S^*$ is of the form $ \Tr_t$.

iv) By the previous part, for each $t\in S$ there is a unique element of $S$, denoted $\phi(t)$, such that $\beta(t,s)=\Tr(\phi(t)\: s)$ for all $s\in S$. It follows that the map $\phi:S\to S$ thus defined is $R$-linear. The non-degeneracy assumption on $\beta$ means that $\phi$ is injective. Thus $\phi: S\to S$ is an $R$-linear isomorphism.
\end{proof}

\section{Eisenstein sums over finite valuation rings} 
Let $R\subseteq S$ be a standard extension of finite valuation rings, of degree $n$, with trace map $\Tr: S\to R$. The Eisenstein sum corresponding to a character $\chi$ of the unit group $S^\times$ is given by
\begin{align*}
E(\chi)=\sum_{\Tr(y)=1} \chi(y).
\end{align*}
Note that an element $y\in S$ satisfying $\Tr(y)=1$ is necessarily a unit. The `singular' Eisenstein sum for $\chi$ is 
\begin{align*}
E_0(\chi)=\sum_{\substack{y\in S^\times \\ \Tr(y)=0}} \chi(y).
\end{align*}
The Eisenstein sums corresponding to the trivial character $\chi_0$ can be easily computed. The value of $E(\chi_0)$ equals the number of solutions for the equation $\Tr(y)=1$, and this number is $|S|/|R|=q^{(n-1)\ell}$.  The value of $E_0(\chi_0)$ equals the number of unit solutions for the equation $\Tr(y)=0$. There are $|S|/|R|=q^{(n-1)\ell}$ solutions in $S$, and $|\pi S|/|\pi R|=q^{(n-1)(\ell-1)}$ solutions in $\pi S$, hence $q^{(n-1)\ell}-q^{(n-1)(\ell-1)}$ solutions which are units.

Our next goal is to compute the absolute values of the Eisenstein sums, that is, to extend Theorem~\ref{thm: field E} from finite fields to finite valuation rings. In order to state our theorem, we must introduce some terminology. We do so at the level of $R$, but we will use it both for $R$ and for $S$.

The filtration $R\supset (\pi) \supset (\pi^2)\supset\cdots \supset (\pi^\ell)=0$ induces a multiplicative filtration for the group of units:
\begin{align*}
R^\times\supset 1+(\pi) \supset 1+(\pi^2)\supset\cdots \supset 1+(\pi^\ell)=1
\end{align*}
In turn, this multiplicative filtration induces a valuation on the characters of $R^\times$. For a multiplicative character $\chi$ we write $\nu(\chi)=k$ when $k$ is smallest with the property that $\chi$ is trivial on $1+(\pi^k)$. By convention, $1+(\pi^0)$ stands for $R^\times$, so $\nu(\chi)=0$ precisely when $\chi$ is trivial. 

\begin{ex}
Let $\e$ be the character of $R^\times$ obtained by lifting the quadratic character of the residue field $F=R/(\pi)$. As $\e$ has order $2$ and the subgroup $1+(\pi)$ has odd order, $\e$ must be trivial on $1+(\pi)$. Thus $\nu(\e)=1$. Note also that $\e$ is the only character of $R^\times$ having order $2$. We call $\e$ \emph{the quadratic character} of $R^\times$.
\end{ex}

\begin{thm}\label{thm: abs E}
Let $\chi$ be a non-trivial character of $S^\times$, with valuation $\nu(\chi)=k\geq 1$. Write $\chi_\res$ for the character of $R^\times$ obtained by restricting $\chi$. 
\begin{itemize}
\item[i)] If $\chi_\res$ is non-trivial, then
\begin{align*}
|E(\chi)|=
\begin{cases}
q^{(n-1)(\ell-k/2)} & \textrm{ if } \nu(\chi_\res)= k\\
0 & \textrm{ if } \nu(\chi_\res)\neq k
\end{cases}, \qquad
E_0(\chi)=0.
\end{align*}
\item[ii)] If $\chi_\res$ is trivial, then
\begin{align*}
|E(\chi)|=
\begin{cases}
q^{(n-1)\ell-n/2} & \textrm{ if } k= 1\\
0 & \textrm{ if } k\neq 1
\end{cases}, \qquad
|E_0(\chi)|=(1-q^{-1})\: q^{(n-1)\ell-(n/2-1)k}.
\end{align*}
Furthermore, if $k=1$ then $E_0(\chi)=-(q-1)\: E(\chi)$.
\end{itemize}
 \end{thm}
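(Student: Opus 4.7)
The plan combines three ingredients: an orbit argument, a filtration reduction to primitive characters, and a Gauss sum calculation. The easy vanishing $E_0(\chi)=0$ in case (i) is immediate: $R^\times\subseteq S^\times$ acts freely on $\{y\in S^\times:\Tr(y)=0\}$ by multiplication, and each orbit contributes $\chi(y_0)\sum_{r\in R^\times}\chi_\res(r)=0$.

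For the reduction to the primitive case $\ell=k$, use the identity $y+(\pi^k)=y(1+(\pi^k))$ for $y\in S^\times$: the additive cosets of $(\pi^k)$ contained in $S^\times$ coincide with the multiplicative cosets of $1+(\pi^k)$, on which $\chi$ is constant. Counting elements of $y+(\pi^k)$ with prescribed trace via Proposition~\ref{prop: standard bilinear form}(i)---using that $\Tr$ maps $(\pi^k)\subseteq S$ onto $\pi^k R$ with kernel of size $q^{(n-1)(\ell-k)}$---gives
\begin{align*}
E_{S/R}(\chi)=q^{(n-1)(\ell-k)}\,\bar E(\tilde\chi),\qquad E_{0,S/R}(\chi)=q^{(n-1)(\ell-k)}\,\bar E_0(\tilde\chi),
\end{align*}
where the barred sums refer to the standard extension $R/(\pi^k)\subseteq S/(\pi^k)$ of nilpotency degree $k$, and $\tilde\chi$ is the induced character, now primitive. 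We may therefore assume $k=\ell$.

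In the primitive case, pick a primitive additive character $\psi_0$ of $R$ (kernel $\{0\}$) and expand $\mathbf{1}_{\Tr(y)=c}=q^{-\ell}\sum_{a\in R}\psi_0(a(\Tr(y)-c))$ to express $E(\chi)$ and $E_0(\chi)$ as weighted sums of Gauss sums $g_S(\chi,\psi_a\circ\Tr)=\sum_{y\in S^\times}\chi(y)\psi_0(a\Tr(y))$. A twist $y\to uy$ by $u\in 1+(\pi^{\ell-j})\subseteq S^\times$ leaves $\psi_a\circ\Tr$ invariant when $a\in\pi^j R$ (since $a(u-1)=0$), so primitivity of $\chi$ forces $g_S(\chi,\psi_a\circ\Tr)=0$ unless $a\in R^\times$; in that case $y\to a^{-1}y$ yields $g_S(\chi,\psi_a\circ\Tr)=\overline{\chi_\res(a)}\,G$ with $G:=g_S(\chi,\psi_0\circ\Tr)$. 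Assembling,
\begin{align*}
E(\chi)=q^{-\ell}\,\overline{\chi_\res(-1)}\,G\cdot g_R(\overline{\chi_\res},\psi_0),\qquad E_0(\chi)=q^{-\ell}\,G\cdot\sum_{a\in R^\times}\overline{\chi_\res(a)}.
\end{align*}
Classical Gauss sum squaring gives $|G|^2=q^{n\ell}$ for primitive $\chi$; $|g_R(\chi_\res,\psi_0)|^2=q^\ell$ for primitive $\chi_\res$; $g_R(\chi_\res,\psi_0)=0$ for non-primitive $\chi_\res$ when $\ell\geq 2$; and $g_R(\chi_0,\psi_0)=-1$ when $\ell=1$. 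Substituting yields every value of $|E(\chi)|$ and $|E_0(\chi)|$ asserted in the theorem, and the ratio $E_0(\chi)=-(q-1)\,E(\chi)$ in case (ii) with $k=1$ follows from $g_R(\chi_0,\psi_0)=-1$.

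The main technical obstacle is the Gauss sum squaring on $S$: expanding $|G|^2=\sum_{z\in S^\times}\chi(z)\sum_{y\in S^\times}\psi_0(\Tr((z-1)y))$, one must show the inner sum vanishes whenever $z-1\notin(\pi^{\ell-1})$ and equals $-q^{n(\ell-1)}$ on $1+(\pi^{\ell-1})\setminus\{1\}$. This requires analyzing when the additive character $y\mapsto\psi_0(\Tr(wy))$ is trivial on $S$ and on $(\pi)$ as additive subgroups, using primitivity of $\psi_0$ together with Proposition~\ref{prop: standard bilinear form}(i) applied to $wS$ and $w(\pi)$.
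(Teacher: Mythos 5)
Your proposal is correct, and its structure is genuinely different from the paper's. The paper proves Theorem~\ref{thm: abs E} by introducing the full family of higher Eisenstein sums $E(\chi,\pi^i)$ for $i=0,\dots,\ell-1$, deriving a single master relation
\[
G(\psi^\ind,\chi)=E_0(\chi)+\sum_{i=0}^{\ell-1}\frac{G(\psi_{(i)},\chi_\res)}{|(\pi^{\ell-i})|}\,E(\chi,\pi^i),
\]
and then solving this linear system by letting $\psi$ range over characters of every valuation. No reduction to the primitive case is ever made; the general $k$ is handled directly, and case~ii) in particular requires the inductive solution of a triangular system in the auxiliary quantities $e_i=-|R^\times|\,E(\chi,\pi^i)/|(\pi^{\ell-i})|$. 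You, by contrast, first dispose of $E_0(\chi)=0$ in case~i) by the simple $R^\times$-orbit argument, then reduce to the primitive situation $k=\ell$ by the fiber count $E_{S/R}(\chi)=q^{(n-1)(\ell-k)}\bar E(\tilde\chi)$ over the quotient extension $R/(\pi^k)\subseteq S/(\pi^k)$ (which is again standard of degree $n$, as you need), and finally handle the primitive case by Fourier inversion against a single primitive $\psi_0$. Your reduction step is the real point of departure: it collapses the paper's $\ell$-parameter linear system into one equation, at the cost of having to verify that the Eisenstein sum scales cleanly along the filtration (it does, because $\Tr:\pi^kS\twoheadrightarrow\pi^kR$ has constant fiber size $q^{(n-1)(\ell-k)}$, Proposition~\ref{prop: standard bilinear form}(i)). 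What you cannot avoid---and correctly acknowledge---is the Gauss-sum absolute-value computation $|G|^2=|S|$ for primitive $\chi$ and primitive $\psi_0\circ\Tr$; your sketch of it (analyzing triviality of $y\mapsto\psi_0(\Tr((z-1)y))$ on $S$ and on $\pi S$ via the valuation of $z-1$) is exactly the paper's Lemma~\ref{lem: abs G}, so the two proofs converge there. One small point you gloss over: after the reduction you also need $\psi_0\circ\Tr$ to be a primitive additive character of $S$ (not just nontrivial), which again follows from Proposition~\ref{prop: standard bilinear form}(i); you cite the right fact in your final paragraph, but it is also needed earlier, when you invoke $|G|^2=q^{n\ell}$.
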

 
The proof is deferred to Section~\ref{sec: technical proof}. In the above statement, the valuation of the restricted character, $\nu(\chi_\res)$, is with respect to $R$. We note that $\nu(\chi_\res)\leq \nu(\chi)$. This is the only relation between the two valuations, in the following sense: given $k\geq 1$ and $0\leq j\leq k$, there is a character $\chi$ of $S^\times$ such that $\nu(\chi)=k$ while $\nu(\chi_\res)=j$. This can be shown by a counting argument, similar to the one performed in Remark~\ref{rem: multiplicities platonic R}. 

\begin{ex}\label{ex: quadratic eisen}
Let $\e$ be the quadratic character of $S^\times$. If $n$ is odd then $\e$ restricts to the quadratic character of $R^\times$, so
\begin{align*}
|E(\e)|=q^{(n-1)(\ell-1/2)}, \qquad E_0(\e)=0.
\end{align*}
If $n$ is even then $\e$ restricts to the trivial character of $R^\times$, so
\begin{align*}
|E(\e)|=q^{(n-1)\ell-n/2}, \qquad |E_0(\e)|=(q-1)\:q^{(n-1)\ell-n/2}.
\end{align*}

\end{ex}

\section{Unimodular graphs over finite valuation rings}
Let $R$ be a finite valuation ring. Recall the main parameters: $q$ is the size of the residue field $R/(\pi)$, and $\ell$ is the nilpotency degree of the uniformizer $\pi$. Throughout, we assume that $q$ is odd. This means that $2$ is a unit in $R$.

As $R$ is a local ring, an $n$-tuple $a\in R^n$ is unimodular precisely when some entry of $a$ is a unit in $R$. Recall that, for $n\geq 2$, $\UM(R^n)$ denotes the bipartite graph on two copies of the set of unimodular $n$-tuples of $R$, in which vertices $a_\bl$ and $b_\wh$ are connected whenever $a\cdot b=1$. For $n\geq 3$, $\UM_0(R^n)$ denotes the bipartite graph defined as follows: take two copies of the set of unimodular $n$-tuples of $R$ modulo units of $R$, and join $[a]_\bl$ to $[b]_\wh$ whenever $a\cdot b=0$. 

As in the case of finite fields, the unimodular graphs over $R$ can be thought of as trace graphs associated to extensions of $R$.

\begin{lem} Let $S$ be a standard extension of $R$ of degree $n$, with trace map $\Tr: S\to R$.
\begin{itemize}
\item[i)] Let $\Tr(S/R)$ denote the bipartite graph on two copies of $S^\times$, in which vertices $x_\bl$ and $y_\wh$ are connected if $\Tr(xy)=1$. Then $\Tr(S/R)$ is isomorphic to $\UM(R^n)$.
\item[ii)] Let $\Tr_0(S/R)$ denote the bipartite graph on two copies of $S^\times/R^\times$, in which vertices $[x]_\bl$ and $[y]_\wh$ are connected if $\Tr(xy)=0$. Then $\Tr_0(S/R)$ is isomorphic to $\UM_0(R^n)$.
\end{itemize}
\end{lem}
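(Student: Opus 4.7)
The plan is to reduce both parts to a single construction: fix an $R$-module identification $S\cong R^n$ and twist one copy so that the trace pairing on $S$ corresponds to the dot product on $R^n$.

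Two preliminary observations drive the argument. First, any $R$-module isomorphism $\Phi:S\to R^n$ restricts to a bijection $S^\times\to R^{n,u}$. Indeed, $\Phi$ is $R$-linear, so $\Phi(\pi S)=\pi\cdot R^n=(\pi R)^n$; and an $n$-tuple $a\in R^n$ fails to be unimodular precisely when every coordinate lies in $\pi R$, i.e., precisely when $a\in(\pi R)^n$. Thus $\Phi$ carries the complement of $\pi S$---which is exactly $S^\times$, since $S$ is local---onto $R^{n,u}$. The same argument applied to an $R$-linear automorphism of $S$ shows that any such map preserves $S^\times$. Second, once I fix such a $\Phi$, the pullback $\beta(x,y):=\Phi(x)\cdot\Phi(y)$ is a non-degenerate $R$-bilinear form on $S$, and Proposition~\ref{prop: standard bilinear form}(iv) yields a unique $R$-linear automorphism $\phi:S\to S$ with $\beta(x,y)=\Tr(\phi(x)\,y)$ for all $x,y\in S$.

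For part (i), I would define $\psi_\bl:S^\times\to R^{n,u}$ by $\psi_\bl(x)=\Phi(\phi^{-1}(x))$ and $\psi_\wh:S^\times\to R^{n,u}$ by $\psi_\wh(y)=\Phi(y)$. Both are bijections by the first observation, and the identity
\begin{equation*}
\Tr(xy)=\beta(\phi^{-1}(x),y)=\Phi(\phi^{-1}(x))\cdot\Phi(y)=\psi_\bl(x)\cdot\psi_\wh(y)
\end{equation*}
shows that edges are preserved: $\Tr(xy)=1$ if and only if $\psi_\bl(x)\cdot\psi_\wh(y)=1$. For part (ii), the same two maps, being $R$-linear, commute with scalar multiplication by units, so they descend to bijections $S^\times/R^\times\to R^{n,u}/R^\times$; the adjacency conditions $\Tr(xy)=0$ and $a\cdot b=0$ are likewise well defined on these quotients and match under the descended maps. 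There is no serious obstacle here---once Proposition~\ref{prop: standard bilinear form}(iv) is in hand, the rest is bookkeeping about $R$-linear isomorphisms and the unit/maximal-ideal dichotomy.
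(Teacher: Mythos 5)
Your proof is correct and follows essentially the same route as the paper: use an $R$-module identification $S\cong R^n$ to transport the dot product to a non-degenerate bilinear form on $S$, invoke Proposition~\ref{prop: standard bilinear form}(iv) to realize it as $\Tr(\phi(\cdot)\,\cdot)$, and absorb $\phi$ by relabeling the black vertices. The paper works with the specific isomorphism coming from the basis $\{1,\xi,\dots,\xi^{n-1}\}$, while you allow an arbitrary $\Phi$, but the argument is the same.
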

\begin{proof}
i) The map $(a_1,a_2,\dots,a_n)\mapsto a_1+a_2\xi+\cdots+a_n\xi^{n-1}$ is an $R$-linear isomorphism between the free $R$-modules $R^n$ and $S=R[\xi]$. Under this isomorphism, unimodular $n$-tuples correspond to units of $S$, and the dot product on $R^n$ turns into a non-degenerate $R$-bilinear form $\beta$ on $S$. By part iv) of Proposition~\ref{prop: standard bilinear form}, we have $\beta(t,s)=\Tr(\phi(t)\: s)$ for some $R$-linear isomorphism $\phi:S\to S$. By $R$-linearity, $\phi$ maps $\pi S$ to $\pi S$, and it does so in a bijective fashion. It follows that $\phi$ restricts to a permutation of the units of $S$. After a relabeling of, say, the black vertices, we obtain the graph $\Tr(S/R)$, which is therefore an isomorphic copy of $\UM(R^n)$.

ii) The $R$-linear nature of the above arguments allows us to mod out by the units of $R$, yielding an isomorphism between $\UM_0(R^n)$ and $\Tr_0(S/R)$.
\end{proof}

Both pictures of the unimodular graphs, the `standard' picture and the `tracial' picture, have their own advantages. We will eventually compute the eigenvalues of the unimodular graphs by viewing them as trace graphs. On the other hand, there are natural full embeddings  
\begin{align*}
\UM(R^n)\into \UM(R^{n+1})& \qquad a_{\bl/\wh}\mapsto (a,0)_{\bl/\wh}\\
\UM_0(R^n)\into \UM_0(R^{n+1}) & \qquad [a]_{\bl/\wh}\mapsto [a,0]_{\bl/\wh}\\
\UM(R^n)\into \UM_0(R^{n+1}) & \qquad a_{\bl}\mapsto [a,1]_\bl,\: b_{\wh}\mapsto [b,-1]_\wh
\end{align*}
which are obvious in the standard picture, but obscure in the tracial picture. The following proof presents further evidence that having both pictures at hand is very useful.

\begin{proof}[Proof of Theorem~\ref{thm: uni-props}]
i) Either picture can be used for this fairly straightforward counting. Let us do it in terms of the trace graphs. The graph $\Tr(S/R)$ has half-size $|S^\times|=(q^{n})^\ell-(q^{n})^{(\ell-1)}$. The degree of each vertex equals the number of solutions for the equation $\Tr(y)=1$, and we have already seen that this number is $q^{(n-1)\ell}$. The graph $\Tr_0(S/R)$ has half-size
\begin{align*}
\frac{|S^\times|}{|R^\times|}=\frac{q^{n\ell}-q^{n(\ell-1)}}{q^{\ell}-q^{\ell-1}}=q^{(n-1)(\ell-1)}\:\frac{q^{n}-1}{q-1}.
\end{align*} 
The degree of each vertex equals the number of solutions in $S^\times/R^\times$ for the equation $\Tr([y])=0$. We have already counted the number of solutions in $S^\times$ as $q^{(n-1)\ell}-q^{(n-1)(\ell-1)}$. Hence the degree of each vertex equals
 \begin{align*}
\frac{q^{(n-1)\ell}-q^{(n-1)(\ell-1)}}{q^{\ell}-q^{\ell-1}}=q^{(n-2)(\ell-1)}\:\frac{q^{n-1}-1}{q-1}.
\end{align*} 

ii) We show that the trace graphs $\Tr(S/R)$ and $\Tr_0(S/R)$ are Cayley graphs. Inversion in the group $S^\times$ defines a semidirect product $S^\times \rtimes \{\pm 1\}$. Concretely, the multiplication is $(x,\sigma)(y,\tau)=(xy^{\sigma},\sigma\tau)$ for $x,y\in S^\times$ and $\sigma,\tau\in \{\pm 1\}$. Now consider the following subset of $S^\times \rtimes \{\pm 1\}$:
\begin{align*}
X=\{(g,-1): g\in S^\times, \Tr(g)=1\}
\end{align*} 
Note that $X$ does not contain the neutral element $(1,1)$, and $X$ is a symmetric subset as it consists of elements of order $2$. In the Cayley graph of $S^\times \rtimes \{\pm 1\}$ with respect to $X$, each edge connects a vertex in $S^\times\times \{+1\}$ to a vertex in $S^\times\times \{-1\}$. More precisely, $(x,+1)$ is connected to $(y,-1)$ if and only if $(y,-1)=(x,+1)(g,-1)=(xg,-1)$ for some $g$ satisfying $\Tr(g)=1$, i.e., $\Tr(y/x)=1$. In other words, the Cayley graph of $S^\times \rtimes \{\pm 1\}$ with respect to $X$ is the bipartite graph on two copies of $S^\times$, in which vertices $x_\bl$ and $y_\wh$ are connected if $\Tr(y/x)=1$. Up to a relabeling of the black vertices by inversion, we have recovered $\Tr(S/R)$.

An adaptation of the previous argument shows that $\Tr_0(S/R)$ is the Cayley graph of the semidirect product $(S^\times/R^\times) \rtimes \{\pm 1\}$ with respect to the subset
\begin{align*}
X_0=\{([g],-1): [g]\in S^\times/R^\times, \Tr(g)=0\}.
\end{align*} 
The next part of the proof implies that $\Tr(S/R)$ and $\Tr_0(S/R)$ are connected, so $X$ and $X_0$ are in fact generating subsets for the corresponding groups.

iii) Here we view $\UM(R^n)$ and $\UM_0(R^n)$ in their original form.

We start with $\UM(R^n)$. By vertex-transitivity, it suffices to find the distance between the vertex $(1,0,\dots,0)_\bl$ and an arbitrary vertex $b_\bl$ or $b_\wh$, where $b=(b_1,b_2,\dots,b_n)$ is unimodular.

\emph{Case 1:} one of $b_2,\dots,b_n$ is a unit. Then the distance to $b_\bl$ is at most $2$, while the distance to $b_\wh$ is at most $3$. Indeed, we may assume without loss of generality that $b_2$ is a unit. We then have the following paths in $\UM(R^n)$:
\begin{align*}
&(1,0,\dots,0)_\bl \sim (1,(1-b_1)/b_2,0,\dots,0)_\wh \sim (b_1, b_2,\dots,b_n)_\bl\\
&(1,0,\dots,0)_\bl \sim (1,b_2,0,\dots,0)_\wh \sim (0,1/b_2,0,\dots,0)_\bl \sim (b_1, b_2,b_3,\dots,b_n)_\wh
\end{align*}

\emph{Case 2:} $b_1$ is a unit. Let $b'=((1-b_2)/b_1,1,0,\dots,0)$, a unimodular tuple to which the previous case applies. As $b_\wh\sim b'_\bl$ and $b_\bl\sim b'_\wh$, we infer that the distance to $b_\wh$ is at most $3$ while the distance to $b_\bl$ is at most $4$. Let us analyze the possibility that $b_\bl$ is at most two edges away:
\begin{align*}
&(1,0,\dots,0)_\bl \sim (1,x_2,\dots,x_n)_\wh \sim (b_1, b_2,\dots,b_n)_\bl
\end{align*}
where $b_1+b_2x_2+\cdots+b_nx_n=1$. But this cannot hold if $b_2,\dots, b_n\in(\pi)$ and $b_1$ is a unit that is not in the subgroup $1+(\pi)$. We conclude that $\UM(R^n)$ has diameter $4$.

Next, we turn to $\UM_0(R^n)$, and we argue along similar lines. By vertex-transitivity, it suffices to find the distance between the vertex $[1,0,\dots,0]_\bl$ and an arbitrary vertex $[b]_\bl$ or $[b]_\wh$, where $b=(b_1,b_2,\dots,b_n)$ is unimodular.

\emph{Case 1:} one of $b_2,\dots,b_n$ is a unit. Then the distance to $[b]_\bl$ is at most $2$, while the distance to $[b]_\wh$ is at most $3$. Indeed, if, say, $b_2$ is a unit, then have the following paths in $\UM_0(R^n)$:
\begin{align*}
&[1,0,\dots,0]_\bl \sim [0,-b_3,b_2,0,\dots,0]_\wh \sim [b_1, b_2,b_3,\dots,b_n]_\bl\\
&[1,0,\dots,0]_\bl \sim [0,b_2,b_3,0,\dots,0]_\wh \sim[0,-b_3,b_2,0,\dots,0]_\bl \sim [b_1, b_2,b_3,\dots,b_n]_\wh
\end{align*}

\emph{Case 2:} $b_1$ is a unit. Let $b'=(-b_2, b_1,0,0,\dots,0)$, a unimodular tuple to which the previous case applies. As $[b]_\wh\sim [b']_\bl$ and $[b]_\bl\sim [b']_\wh$, the distance to $[b]_\wh$ is at most $3$ while the distance to $[b]_\bl$ is at most $4$. It remains to analyze  the possibility that the distance to $[b]_\bl$ is at most $2$:
\begin{align*}
&[1,0,\dots,0]_\bl \sim [0,x_2,\dots,x_n]_\wh \sim [b_1, b_2,\dots,b_n]_\bl
\end{align*}
where $b_2x_2+\cdots+b_nx_n=0$, and one of $x_2,\dots,x_n$ is a unit. This is equivalent to one of $b_2,\dots,b_n$ belonging to the ideal generated by the others. As $R$ is principal, this is indeed the case, and we conclude that $\UM_0(R^n)$ has diameter $3$.
\end{proof}

\begin{rem}
The girth can also be determined. We only indicate the results, leaving the details to the reader. The girth of $\UM_0(R^n)$ is $4$, except when $n=3$ and $R$ is a field in which case the girth is $6$. Likewise, the girth of $\UM(R^n)$ is $4$, except when $n=2$ and $R$ is a field in which case the girth is $6$.
\end{rem}

\begin{rem}
Over a finite local ring $R$, it is still the case that $\UM(R^n)$ has diameter $4$. The diameter of $\UM_0(R^n)$, on the other hand, reveals a small surprise. Let $n_R$ denote the smallest positive integer with the property that every ideal of $R$ can be generated by $n_R$ elements. Thus, finite valuation rings are characterized by $n_R=1$. Then the diameter of $\UM_0(R^n)$ is $3$ for $n\geq n_R+2$, and $4$ otherwise. This can be glimpsed from the last step in the proof of the previous theorem.
\end{rem}

\begin{proof}[Proof of Theorem~\ref{thm: spec UM gen}]
The argument is the same as in the case of finite fields, see the proof of Theorem~\ref{thm: uni-spectra F}. The eigenvalues of the trace graph $\Tr(S/R)$ are $\pm |E(\chi)|$, where $\chi$ runs over the characters of $S^\times$. The signed absolute values of the Eisenstein sums are given by Theorem~\ref{thm: abs E}.

The eigenvalues of the trace graph $\Tr_0(S/R)$ are $\pm |E_0(\chi)|/|R^\times|$, where $\chi$ runs over the characters of $S^\times$ that are trivial on $R^\times$. Theorem~\ref{thm: abs E} provides the signed absolute values of the singular Eisenstein sums. When $\chi$ is non-trivial, we compute
\begin{align*}
\frac{|E_0(\chi)|}{|R^\times|}=\frac{(q-1)\:q^{(n-1)\ell-(n/2-1)k-1}}{(q-1)\:q^{\ell-1}}=q^{(n-2)(\ell-k/2)}
\end{align*} 
for $k=1,\dots,\ell$.
\end{proof}

Again, the eigenvalue multiplicities can be determined by doing a character count, but the formulas are not particularly appealing, and we will not need these multiplicities. The method is illustrated in Remark~\ref{rem: multiplicities platonic R} for the case of Platonic graphs.

\section{Applications}
\subsection{Edge counting} Let us recall a simple, but powerful estimate for edge-counting, due to Alon and Chung \cite{AC}.

Let $X$ be a connected, $d$-regular and bipartite graph on two copies of $V$, where $|V|=m$. Given two non-empty vertex subsets $U\subseteq V_\bl$ and $W\subseteq V_\wh$, we let $e(U,W)$ denote the number of edges joining vertices in $U$ to vertices in $W$. Then:
\begin{align}\label{eq: alon-chung}
\Big|e(U,W)-\frac{d}{m} |U||W|\Big|\leq \frac{\alpha_2}{m} \sqrt{|U||W|(m-|U|)(m-|W|)} 
\end{align}
where $\alpha_2$ is the largest non-trivial adjacency eigenvalue of $X$.

\begin{proof}[Proof of Theorem~\ref{thm: counting ring}] Still keeping the previous notations, let us first put \eqref{eq: alon-chung} in a form that is more convenient for our present needs. The estimate $\sqrt{(m-|U|)(m-|W|)}\leq m-\sqrt{|U||W|}$ leads to
\begin{align*}
 \frac{d+\alpha_2}{m}|U||W|-\alpha_2\sqrt{|U||W|}\leq e(U,W)\leq  \frac{d-\alpha_2}{m}|U||W|+\alpha_2\sqrt{|U||W|}.
\end{align*}
Therefore
\begin{align}\label{eq: alon-chung 2}
\big|e(U,W)- c\: |U||W|\big|< \alpha_2\sqrt{|U||W|} \qquad \textrm{ whenever }\frac{d-\alpha_2}{m}< c<  \frac{d+\alpha_2}{m}.
\end{align}
The graph $\UM_0(R^n)$ has 
\begin{align*}
m=q^{(n-1)(\ell-1)}\:\frac{q^n-1}{q-1},\quad d=q^{(n-2)(\ell-1)}\:\frac{q^{n-1}-1}{q-1},\quad \alpha_2=q^{(n-2)(\ell-1/2)}
\end{align*} 
and a small computation shows that
\begin{align*}
\frac{d\pm \alpha_2}{m}=q^{-\ell}\: \frac{\frac{q^n-1}{q-1}-1\pm q^{n/2}}{\frac{q^n-1}{q-1}}.
\end{align*}
We may thus use $c=q^{-\ell}$ in \eqref{eq: alon-chung 2}.

Now take $A,B\subseteq R^n$. Let $A'$ and $B'$ be the projective images of $A\times \{1\}$ and $B\times \{-1\}$ in $R^{n+1,u}/R^\times$. Indeed, $A\times \{1\}$ and $B\times \{-1\}$ consist of unimodular tuples, thanks to the last coordinate. Note also that $|A'|=|A|$ and $|B'|=|B|$. Viewing $A'$ and $B'$ as black, respectively white vertices in the graph $\UM_0(R^{n+1})$, the number of edges between $A'$ and $B'$ is precisely $N_1(A,B)$. The desired estimate follows from \eqref{eq: alon-chung 2}.
\end{proof}

\subsection{Isoperimetric constant} Recall that the isoperimetric constant of a graph $X$, herein assumed to be connected and $d$-regular, is given by
\begin{align*}
\iso(X)=\min \frac{e(U,W)}{\min\{|U|, |W|\}}
\end{align*}
where the minimum is taken over all partitions of the vertices of $X$ into two non-empty sets $U$ and $W$. As before, $e(U,W)$ denotes the number of edges connecting vertices in $U$ to vertices in $W$.

The isoperimetric constant can be estimated with the help of eigenvalues. Firstly, there is a well-known lower bound, usually attributed to Alon and Milman, saying that
\begin{align}\label{eq: lower bound}
\iso(X)\geq \tfrac{1}{2}(d-\alpha_2)
\end{align}
where $\alpha_2$ is the largest non-trivial eigenvalue of $X$. Secondly, a seminal idea due to Fiedler and Donath-Hoffman, from the early seventies, is that one can partition the vertices of $X$ by using an adjacency eigenvector. In favorable circumstances, this leads to an upper bound for $\iso(X)$ in terms of the corresponding eigenvalue. 

We will use the latter idea in the following form.

\begin{lem}
Let $X$ be a connected, $d$-regular, and bipartite graph on two copies of $V$, where $|V|=m$. Assume that the reduced adjacency matrix has an eigenvector $f$ which is $\{\pm 1\}$-valued and has zero mean, $\sum_{v\in V} f(v)=0$, with eigenvalue $\alpha$. (A fortiori, $\alpha$ is integral and $|\alpha|<d$.) Then
\begin{align}\label{eq: upper bound}
\iso(X)\leq \tfrac{1}{2}(d-|\alpha|).
\end{align}
\end{lem}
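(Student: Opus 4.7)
The plan is to build an explicit balanced bipartition of $V(X)=V_\bl\sqcup V_\wh$ directly from the eigenvector $f$, and then count the edges crossing this bipartition via the eigenvalue equation.

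First, I would extend $f$ to a $\{\pm 1\}$-valued function $g$ on the full vertex set $V_\bl\sqcup V_\wh$ by setting $g(x_\bl)=f(x)$ on the black side, and $g(y_\wh)=\sigma\, f(y)$ on the white side, where $\sigma\in\{\pm 1\}$ is a sign to be chosen at the end. The set $U=g^{-1}(+1)$ and its complement $W=g^{-1}(-1)$ will be our candidate partition. The crucial role of the zero-mean hypothesis $\sum_v f(v)=0$ is that both $f^{-1}(+1)$ and $f^{-1}(-1)$ have size $m/2$; consequently $|U|=|W|=m$ regardless of the choice of $\sigma$. This forces $m$ to be even, and more importantly it means $\min\{|U|,|W|\}=m$, so the isoperimetric ratio for this particular partition is just $e(U,W)/m$.

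Next comes the key calculation. Summing the product $g(x_\bl)g(y_\wh)$ over edges and using the eigenvalue identity $(Af)(x)=\sum_{y:x\sim y}f(y)=\alpha f(x)$, I compute
\begin{align*}
\sum_{\{x_\bl,y_\wh\}\in E} g(x_\bl)\,g(y_\wh)
=\sigma\sum_{x\in V} f(x)\sum_{y:x\sim y} f(y)
=\sigma\alpha\sum_{x\in V} f(x)^2
=\sigma\alpha\, m.
\end{align*}
Splitting the edges into those where $g$ agrees at the two endpoints (of which there are $e_{\mathrm{mono}}$) and those where $g$ disagrees (of which there are $e(U,W)$), the left-hand side equals $e_{\mathrm{mono}}-e(U,W)$. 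Combined with the total edge count $e_{\mathrm{mono}}+e(U,W)=dm$, this yields $e(U,W)=\tfrac{1}{2}(dm-\sigma\alpha m)$.

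Finally I choose $\sigma:=\operatorname{sign}(\alpha)$ (either sign if $\alpha=0$), so that $\sigma\alpha=|\alpha|$ and hence $e(U,W)=\tfrac{1}{2}\,m(d-|\alpha|)$. Dividing by $\min\{|U|,|W|\}=m$ gives the desired upper bound $\iso(X)\leq\tfrac{1}{2}(d-|\alpha|)$. There is no real obstacle here; the only subtle points are the sign choice that converts $\alpha$ into $|\alpha|$, and the observation that zero mean of $f$ is exactly what guarantees the candidate partition is balanced, so that the denominator in the isoperimetric ratio is $m$ rather than something smaller.
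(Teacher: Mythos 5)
Your proof is correct and uses the same key idea as the paper: partition both sides of the bipartition by the sign of $f$, with the sign on one side flipped by $\sigma=\operatorname{sign}(\alpha)$, and use the zero-mean hypothesis to get $|U|=|W|=m$. The only difference is bookkeeping — you extract $e(U,W)$ from the single global sum $\sum_{\text{edges}} g(x_\bl)g(y_\wh)=\sigma\alpha m$ together with the total edge count $dm$, whereas the paper computes each of the four block edge counts $e(V(\sigma_1)_\wh,V(\sigma_2)_\bl)=\tfrac{1}{4}m(d+\sigma_1\sigma_2\alpha)$ directly via inner products of indicator functions $\tfrac{1}{2}(f_1+\sigma f)$; these are two equivalent ways of exploiting the eigenvalue identity.
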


\begin{proof}
We aim for a partition of $V_\wh\cup V_\bl$ into two sets $U$ and $W$ such that $|U|=|W|=m$, and $e(U,W)=\tfrac{1}{2}m(d-|\alpha|)$. The eigenvector $f$ partitions $V$ into two subsets, $V(\sigma)=\{v: f(v)=\sigma\}$ for $\sigma\in \{\pm 1\}$. The characteristic function of $V(\sigma)$ is $\tfrac{1}{2}(f_1+\sigma f)$, where $f_1$ denotes the constant function equal to $1$ on $V$. Note that $f_1$ is orthogonal to $f$.

Let $A$ denote the reduced adjacency matrix. For $\sigma_1,\sigma_2\in \{\pm 1\}$, the number of edges between $V(\sigma_1)_\wh$ and $V(\sigma_2)_\bl$ is
\begin{align*}
\big\la A\: \tfrac{1}{2}(f_1+\sigma_1 f), \tfrac{1}{2}(f_1+\sigma_2 f) \big\ra&= \tfrac{1}{4} \la df_1+\sigma_1 \alpha f, f_1+\sigma_2 f \ra\\
&=  \tfrac{1}{4} \big( d\:\la f_1,f_1\ra+\sigma_1 \sigma_2 \alpha\: \la f, f \ra \big)= \tfrac{1}{4} m ( d+ \sigma_1 \sigma_2 \alpha).
\end{align*}
Now pick $\sigma\in \{\pm 1\}$ to be the sign of $\alpha$, so $\sigma \alpha=|\alpha|$. The desired partition is given by $U=V(+1)_\wh\cup V(\sigma)_\bl$ and $W=V(-1)_\wh\cup V(-\sigma)_\bl$. 
\end{proof}

\begin{proof}[Proof of Theorem~\ref{thm: iso UM gen}]
The lower bounds for the isoperimetric constant come from \eqref{eq: lower bound}. Focusing on the upper bounds, we wish to apply  \eqref{eq: upper bound}. Consider the graphs $\UM(R^n)$ and $\UM_0(R^n)$ in their trace realizations, $\Tr(S/R)$ and $\Tr_0(S/R)$. The role of $f$ in the previous lemma is played by $\e$, the quadratic character of $S^\times$. In $\Tr(S/R)$, $\e$ is a reduced adjacency eigenvector with eigenvalue $E(\e)$, and
\begin{align*}
|E(\e)|=
\begin{cases}
q^{(n-1)(\ell-1/2)} & \textrm{ if } n \textrm{ is odd},\\
q^{(n-1)\ell-n/2} & \textrm{ if } n \textrm{ is even}.
\end{cases}
\end{align*}
as explained in Example~\ref{ex: quadratic eisen}.

If $n$ is even, then $\e$ is also a reduced adjacency eigenvector for $\Tr_0(S/R)$, with eigenvalue $E_0(\e)/|R^\times|$. We read off the absolute value of $E_0(\e)$ from Example~\ref{ex: quadratic eisen}, and we obtain
\begin{align*}
|E_0(\e)|/|R^\times|=q^{(n-2)(\ell-1)+n/2-1}.
\end{align*}
\end{proof}

\section{Platonic graphs}
Let $R$ be a finite ring. Recall that a pair $(a,b)\in R^2$ is unimodular if the ideal generated by $a$ and $b$ is the whole of $R$. To phrase this in a way that is consistent with the following discussion, $(a,b)\in R^2$ is unimodular if there are $c,d\in R$ such that $ad-bc=1$. The Platonic graph $\mathrm{Pl}(R)$ has vertex set $R^{2,u}/\{\pm 1\}$, and two vertices $[a,b]$ and $[c,d]$ are connected whenever $ad-bc=\pm 1$. 

We consider an operator which, on the one hand, is closely related to the adjacency operator of $\mathrm{Pl}(R)$, and, on the other hand, is well-behaved under ring products. Let
\begin{align*}
D: \mathcal{F}(R^{2,u})\to \mathcal{F}(R^{2,u}), \qquad Df(a,b)=\sum_{(c,d):\: ad-bc=1} f(c,d)
\end{align*}
where $\mathcal{F}(R^{2,u})$ denotes the linear space of complex-valued functions on $R^{2,u}$. Although we will not rely on this perspective, we note here that $D$ is the reduced adjacency operator of the following graph, isomorphic to the unimodular graph $\UM(R^2)$: the vertex set consists of two copies of $R^{2,u}$, and $(a,b)_\bl$ is connected to $(c,d)_\wh$ if $ad-bc=1$.

A function $f\in\mathcal{F}(R^{2,u})$ is \emph{even} if $f(-a,-b)=f(a,b)$ for all $(a,b)\in R^{2,u}$, respectively \emph{odd} if $f(-a,-b)=-f(a,b)$ for all $(a,b)\in R^{2,u}$. The corresponding subspaces of $\mathcal{F}(R^{2,u})$ are denoted $\mathcal{F}(R^{2,u})^+$ and $\mathcal{F}(R^{2,u})^-$. Then $D$ respects the decomposition $\mathcal{F}(R^{2,u})=\mathcal{F}(R^{2,u})^+\oplus\mathcal{F}(R^{2,u})^-$,
so $D$ decomposes as $D=D^+\oplus D^-$. Under the identification $\mathcal{F}(R^{2,u})^+=\mathcal{F}(R^{2,u}/\{\pm 1\})$, the even part $D^+$ is precisely the adjacency operator of $\mathrm{Pl}(R)$.

\begin{lem}\label{lem: D+D-}
Let $R$ be a finite valuation ring.
\begin{itemize}
\item[i)] Assume $R$ is a field, i.e., $\ell=1$. Then $D^+$ has eigenvalues $q$, $-1$, $\pm\: q^{1/2}$, except when $q=3$ in which case $\pm\: q^{1/2}$ is missing, while $D^-$ has eigenvalues $\pm\: \mathrm{i}q^{1/2}$.
\item[ii)] Assume $R$ is not a field, i.e., $\ell\geq 2$. Then $D^+$ has eigenvalues $q^\ell$, $0$, $\pm\: q^{\ell-k/2}$ for $k=1,\dots,\ell$, except when $q=3$ in which case $\pm\: q^{\ell-1/2}$ is missing, while $D^-$ has eigenvalues $0$,$\pm\: \mathrm{i} q^{\ell-k/2}$ for $k=1,\dots,\ell$.
\end{itemize}
\end{lem}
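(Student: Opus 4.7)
The plan is to model $D$ on a standard unramified quadratic extension $S/R$ and diagonalize via characters of $S^\times$, in parallel with the recipe of Section~\ref{sec: eigenvalues, explained}. I would take $S=R[\xi]$ with $\xi^2=\delta$ for some $\delta\in R^\times$ whose residue is a non-square in $F$; then $\xi\in S^\times$, $\Tr(\xi)=0$, and the nontrivial automorphism $\sigma$ of $S/R$ sends $\xi\mapsto-\xi$. The $R$-linear bijection $(a,b)\mapsto x=a+b\xi$ identifies $R^{2,u}$ with $S^\times$, and a short computation using $\Tr(\xi^{-1})=0$ yields the identity
\begin{align*}
ad-bc \;=\; \Tr\!\left(\frac{\sigma(x)\:y}{2\xi}\right), \qquad y=c+d\xi.
\end{align*}
Under this identification $D$ takes the trace-condition form $(Df)(x)=\sum_{y:\:\Tr(\sigma(x)y/(2\xi))=1}f(y)$.

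Taking characters of $S^\times$ as a basis, the substitution $w=\sigma(x)y/(2\xi)$ gives
\begin{align*}
D\chi \;=\; \chi(2\xi)\:E(\chi)\:\tilde\chi, \qquad \tilde\chi(x):=\overline{\chi(\sigma(x))}.
\end{align*}
Since $\tilde{\tilde\chi}=\chi$, $E(\tilde\chi)=\overline{E(\chi)}$, and $\chi(2\xi)\tilde\chi(2\xi)=\chi(-1)$, iterating gives $D^2\chi=\chi(-1)\:|E(\chi)|^2\:\chi$. The parity $\chi(-1)=\pm 1$ matches the decomposition $\mathcal{F}(R^{2,u})=\mathcal{F}^+\oplus\mathcal{F}^-$, which already forces $D^+$-eigenvalues to be real and $D^-$-eigenvalues purely imaginary. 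I would then split characters into two classes. A character is \emph{self-conjugate} ($\tilde\chi=\chi$) precisely when $\chi$ vanishes on $N(S^\times)$; since the norm $N\colon S^\times\to R^\times$ is surjective (a Hensel-type lift of the field-level surjectivity), self-conjugacy is equivalent to triviality on $R^\times$. All such characters are even (as $-1\in R^\times$), and each contributes the single eigenvalue $\chi(2\xi)E(\chi)$ to $D^+$. The remaining characters pair up as $\{\chi,\tilde\chi\}$ of common parity, and on each two-dimensional block $D$ is antidiagonal with eigenvalues $\pm|E(\chi)|\sqrt{\chi(-1)}$; so each even pair contributes $\pm|E(\chi)|$ to $D^+$ and each odd pair contributes $\pm\mathrm{i}|E(\chi)|$ to $D^-$.

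Theorem~\ref{thm: abs E} with $n=2$ now reads off the absolute values. Non-self-conjugate characters with $\nu(\chi)=\nu(\chi_{\mathrm{res}})=k$ give $|E(\chi)|=q^{\ell-k/2}$ for $k=1,\dots,\ell$, and every other non-self-conjugate character gives $0$. Among self-conjugate characters, the trivial one gives $|E(\chi)|=q^\ell$, those with $\nu(\chi)=1$ (factoring through $K^*/F^*$) give $|E(\chi)|=q^{\ell-1}$, and those with $\nu(\chi)\geq 2$ give $0$. The main subtlety I expect is pinning down the sign $\chi(2\xi)E(\chi)$ in the self-conjugate case. For $\ell=1$, I would combine the relation $E_0(\chi)=-(q-1)E(\chi)$ from Theorem~\ref{thm: abs E} with the direct evaluation $E_0(\chi)=(q-1)\chi(\xi)$ (since the trace-zero units of $S$ are exactly $F^*\xi$) and with $\chi(\xi^2)=\chi(\delta)=1$ to conclude that $\chi(2\xi)E(\chi)=-1$ uniformly across non-trivial self-conjugate characters; this is the source of the eigenvalue $-1$ in part~(i). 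For $\ell\geq 2$ the absolute value $q^{\ell-1}$ already lies in the $k=2$ slot of $\pm q^{\ell-k/2}$, and the presence of both signs is guaranteed by non-self-conjugate pairs with $\nu=2$, so no extra bookkeeping is needed. The $q=3$ exception is the other delicate point, but it reduces to a character count: the number of even non-self-conjugate characters with $\nu(\chi)=\nu(\chi_{\mathrm{res}})=1$ is $(q-3)(q+1)/2$, which vanishes exactly for $q=3$ and removes $\pm q^{\ell-1/2}$ from $D^+$; for $D^-$, the condition $\chi(-1)=-1$ automatically forces $\nu(\chi_{\mathrm{res}})=1$, so $\pm\mathrm{i}q^{\ell-1/2}$ persists in $D^-$.
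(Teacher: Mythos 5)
Your proposal follows essentially the same route as the paper: identify $R^{2,u}$ with $S^\times$ for a standard quadratic extension $S=R[\sqrt j]$, express $D$ via the trace condition $\Tr(\overline{x}y/(2\sqrt j))=1$, diagonalize with characters via $D\chi = \chi(2\sqrt j)E(\chi)\chi^*$, split into self-conjugate characters (trivial on $R^\times$, necessarily even, giving the diagonal eigenvalues including the $-1$ or $-q^{\ell-1}$ from $E_0(\chi)=-(q-1)E(\chi)$ combined with the direct evaluation $E_0(\chi)=|R^\times|\chi(\sqrt j)$) and conjugate pairs (giving $2\times 2$ blocks with eigenvalues $\pm|E(\chi)|$ or $\pm \mathrm{i}|E(\chi)|$ by parity), and read off the magnitudes from Theorem~\ref{thm: abs E} with $n=2$. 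The small deviations — packaging the parity split through the computation $D^2\chi=\chi(-1)|E(\chi)|^2\chi$, and not bothering to pin down the sign $\chi(2\sqrt j)E(\chi)=-q^{\ell-1}$ for $\ell\ge 2$ since $-q^{\ell-1}$ is already in the $k=2$ slot of $\pm q^{\ell-k/2}$ — are harmless presentational shortcuts; the $q=3$ bookkeeping matches the character counts the paper records in Remark~\ref{rem: multiplicities platonic R}.
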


\begin{proof}
Let $S=\{a+b\sqrt{j}: a, b\in R\}$ be a standard quadratic extension of $R$. Concretely, $j\in R^\times$ is a lift of a non-square in the residue field $F$ of $R$. The trace $\Tr: S\to R$ is given by $\Tr(s)=s+\overline{s}$, where conjugation in $S$ means exactly what it should: $\overline{a+b\sqrt{j}}=a-b\sqrt{j}$. There is also a norm map, given by $\Nm: S\to R$ and $\Nm(s)=s\overline{s}$. As $N$ is multiplicative, it restricts to a group homomorphism $N:S^\times\to R^\times$. Let us argue that, as in the case of finite fields, this homomorphism is onto. Let $r\in R^\times$. We have to show that $a^2-jb^2=r$ for some $a,b\in R$. At the level of residue fields, the norm is surjective, so there are $a_0,b_0\in R$ such that $a_0^2-jb_0^2=r$ mod $\pi$. At least one of $a_0$ and $b_0$, say $b_0$, is a unit in $R$. Then $b_0$ is a simple root for the equation $a_0^2-jx^2=r$ mod $\pi$ so, by Hensel's lemma, there exists $b\in R$, $b=b_0$ mod $\pi$, such that $a^2-jb^2=r$.

Via the correspondence $(a,b)\leftrightarrow a+b\sqrt{j}$, unimodular pairs for $R$ correspond to units in $S$. So we may think of $D$ as the operator $D:\mathcal{F}(S^\times)\to \mathcal{F}(S^\times)$ given by
\begin{align*}
Df(x)=\sum_{y: \: \Tr((2\sqrt{j})^{-1}\overline{x}y)=1} f(y)
\end{align*}
On a character $\chi$ of $S^\times$, the operator $D$ acts as follows:
\begin{align*}
D\chi(x)=\sum_{y: \: \Tr((2\sqrt{j})^{-1}\overline{x}y)=1} \chi(y)= \sum_{\Tr(y)=1} \chi\Big(\frac{2\sqrt{j}\: y}{\overline{x}}\Big)= \chi(2\sqrt{j})\:E(\chi)\:\overline{\chi}(\overline{x})
\end{align*}
If $\chi$ is a character of $S^\times$, then $\chi^*$ defined by $x\mapsto \overline{\chi}(\overline{x})$ is again a character of $S^\times$. With this notation, we have 
\begin{align*}
D\chi= c(\chi)\: \chi^*, \qquad c(\chi):=\chi(2\sqrt{j})\: E(\chi).
\end{align*} 
A character $\chi$ is even if $\chi(-1)=\chi(1)$, respectively odd if $\chi(-1)=-\chi(1)$. Note that $\chi\mapsto \chi^*$ is parity-preserving, and that $ \chi^{**}=\chi$. This means that $D$, viewed as a matrix, is block-diagonal. One block is a diagonal matrix indexed by the characters which are fixed under the transformation $\chi\mapsto \chi^*$. The remaining blocks are $2$-by-$2$ off-diagonal matrices, one for each pair $\{\chi,\chi^*\}$ of non-fixed characters.

We have $\chi^*=\chi$ if and only if $\chi(x\overline{x})=\chi(\Nm(x))=1$ for all $x\in S^\times$, i.e., $\chi$ is trivial on $R^\times$. In particular, $\chi$ is even. For the trivial character we have $c(\chi_0)=E(\chi_0)=q^\ell$. Now let $\chi$ be non-trivial, but trivial on $R^\times$. Recall from Theorem~\ref{thm: abs E} that $E(\chi)=0$ if $\nu(\chi)\neq 1$ and $E_0(\chi)=-(q-1)\: E(\chi)$ if $\nu(\chi)= 1$. But the singular Eisenstein sum turns out to be easily computable:
\begin{align*}
E_0(\chi)=\sum_{\substack{\Tr (y)=0\\ y\in S^\times}} \chi(y)= \sum_{b\in R^\times} \chi(b\sqrt{j})=|R^\times|\:\chi(\sqrt{j})= (q^\ell-q^{\ell-1})\:\chi(\sqrt{j})
\end{align*}
Thus, if $\nu(\chi)=1$, then
\begin{align*}
c(\chi)=E(\chi)\:\chi(2\sqrt{j})=-q^{\ell-1}\:\chi(\sqrt{j})\:\chi(2\sqrt{j})=-q^{\ell-1}\:\chi(2j)=-q^{\ell-1}.
\end{align*}
Now let $\chi$ be a character which is not fixed under $\chi\mapsto \chi^*$, i.e., $\chi$ is non-trivial on $R^\times$. As $E(\chi^*)=E(\overline{\chi})$ and $\chi^*(2\sqrt{j})=\overline{\chi}(-2\sqrt{j})=\pm\:\overline{\chi}(2\sqrt{j})$, according to the parity of $\chi$, we have 
\begin{align*}
c(\chi^*)=\pm\:\overline{c(\chi)}.
\end{align*} 
The $2$-by-$2$ block corresponding to the pair $\{\chi,\chi^*\}$ has the form
\begin{align*}
\begin{pmatrix}
0 & \pm\:\overline{c(\chi)}\\
c(\chi) & 0
\end{pmatrix}
\end{align*}
and the resulting eigenvalues are $\pm\: |c(\chi)|=\pm\: |E(\chi)|$ when $\chi$ is even, respectively  $\pm\: \mathrm{i} |c(\chi)|=\pm\: \mathrm{i} |E(\chi)|$ when $\chi$ is odd. By Theorem~\ref{thm: abs E}, we know that $E(\chi)=0$ when $\nu(\chi)\neq \nu(\chi_\res)$, and $|E(\chi)|=q^{\ell-k/2}$ when $\nu(\chi)=\nu(\chi_\res)=k$.
\end{proof}

\begin{rem}\label{rem: multiplicities platonic R}
Let us count the multiplicities.

i) Let $\ell=1$, i.e., $R$ is a field.

\emph{Multiplicities for $D^+$}. There are $\tfrac{1}{2}(q^2-1)$ even characters of $S^\times$. Among them, we have $|S^\times|/|R^\times|=q+1$ characters that are trivial on $R^\times$. The trivial character yields the eigenvalue $q$, and the remaining $q$ characters are eigenvectors for the eigenvalue $-1$. There are $\tfrac{1}{2}\big(\tfrac{1}{2}(q^2-1)-(q+1)\big)=\tfrac{1}{4}(q+1)(q-3)$ pairs of characters that are not trivial on $R^\times$, and this is the multiplicity for both $q^{1/2}$ and $-q^{1/2}$. When $q=3$, this multiplicity is $0$.

\emph{Multiplicities for $D^-$}. There are $\tfrac{1}{2}(q^2-1)$ odd characters of $S^\times$. Both $\mathrm{i} q^{1/2}$ and $-\mathrm{i} q^{1/2}$ have multiplicity $\tfrac{1}{4}(q^2-1)$.

ii) Let $\ell\geq 2$, i.e., $R$ is not a field.

\emph{Multiplicities for $D^+$}. It suffices to focus on the non-zero eigenvalues since the multiplicity of $0$ can be determined from the dimension count. The trivial eigenvalue $q^\ell$ comes from the trivial character, and it has multiplicity $1$. 

\begin{align*}
\xymatrix{
&&\\
& S^\times&\\
R^\times\ar@{-}[ru] && 1+\pi S\ar@{-}[lu]_{q^2-1}\\
& 1+\pi R\ar@{-}[ru] \ar@{-}[lu]^{q-1} &
}\qquad\qquad 
\xymatrix{
S^\times &&\\
& 1+\pi^{k-1}S \ar@{-}[lu]_{\quad(q^2-1)\: q^{2(k-2)}} &\\
1+\pi^{k-1}R \ar@{-}[ru] && 1+\pi^{k}S\ar@{-}[lu]_{q^2}\\
& 1+\pi^k R\ar@{-}[ru] \ar@{-}[lu]^{q} &
}
\end{align*}

The even characters of $S^\times$ that are non-trivial on $R^\times$ yield eigenvalues $\pm\: q^{\ell-k/2}$ when $\nu(\chi_\res)=\nu(\chi)=k$. For $k=1$, the condition $\nu(\chi_\res)=\nu(\chi)=1$ means that $\chi$ is trivial on $1+\pi S$ but not on $R^\times$. There are $[S^\times : (1+ \pi S)]=q^2-1$ characters of $S^\times$ that are trivial on $1+\pi S$. Exactly half of them are even, since $-1\notin 1+\pi S$ (residue fields have odd characteristic). The number of characters of $S^\times$ that are trivial on $1+\pi S$ and $R^\times$ (in particular, these characters are even since $-1\in R^\times$) equals the index in $S^\times$ of the subgroup generated by $1+\pi S$ and $R^\times$. As $(1+\pi S)\cap R^\times=1+\pi R$, this index is
\begin{align*}
\frac{[S^\times : (1+ \pi S)]}{[R^\times : (1+ \pi R)]}=\frac{q^2-1}{q-1}=q+1.
\end{align*}
So there are $\tfrac{1}{2}(q^2-1)-(q+1)=\tfrac{1}{2}(q+1)(q-3)$ even characters of $S^\times$ satisfying $\nu(\chi_\res)=\nu(\chi)=1$. This yields a multiplicity of $\tfrac{1}{4}(q+1)(q-3)$ for both $q^{\ell-1/2}$ and $-q^{\ell-1/2}$. Again, when $q=3$, this multiplicity is $0$.

For $k\geq 2$, the condition $\nu(\chi_\res)=\nu(\chi)=k$ means that $\chi$ is trivial on $1+\pi^k S$ but not on $1+\pi^{k-1} R$. A similar count shows that there are
\begin{align*}
&\frac{1}{2}\: \big[S^\times : (1+ \pi^{k-1} S)\big]\Bigg(\big[(1+ \pi^{k-1} S) : (1+ \pi^k S)\big] -\frac{\big[(1+ \pi^{k-1} S) : (1+ \pi^k S)\big]}{\big[(1+ \pi^{k-1} R) : (1+ \pi^k R)\big]}\Bigg)\\
&=\frac{1}{2} (q^2-1)\: q^{2(k-2)}\: (q^2-q)
\end{align*}
 even characters of $S^\times$ satisfying this property. This yields a multiplicity of $\tfrac{1}{4}(q^2-1)\: (q^2-q)\: q^{2(k-2)}$ for both $q^{\ell-k/2}$ and $-q^{\ell-k/2}$.
 
The even characters of $S^\times$ that are trivial on $R^\times$ yield the eigenvalue $- q^{\ell-1}$ when $\nu(\chi)=1$. This eigenvalue has already appeared in previous case, so we have to amend the multiplicity computed before. Here we have to count the non-trivial even characters of $S^\times$ that are trivial on $R^\times$ and on $1+\pi S$. We have essentially seen this count, in the case $k=1$ above. The answer is $(q+1)-1=q$. The readjusted multiplicity of $- q^{\ell-1}$ is thus $\tfrac{1}{4}(q^2-1)\: (q^2-q) + q$.

\emph{Multiplicities for $D^-$}. Again, it suffices to focus on the non-zero egenvalues. The odd characters of $S^\times$ that are non-trivial on $R^\times$ yield eigenvalues $\pm\:\mathrm{i} q^{\ell-k/2}$ when $\nu(\chi_\res)=\nu(\chi)=k$. There are $\tfrac{1}{2}(q^2-1)$ odd characters of $S^\times$ that are trivial on $1+\pi S$. This is the combined multiplicity for $\mathrm{i} q^{\ell-1/2}$ and $-\mathrm{i}q^{\ell-1/2}$. For $k\geq 2$, there are $\frac{1}{2} (q^2-1)\: (q^2-q)\: q^{2(k-2)}$ odd characters of $S^\times$ that are trivial on $1+\pi^k S$ but not on $1+\pi^{k-1} R$. This is the combined multiplicity for $\mathrm{i}q^{\ell-k/2}$ and $-\mathrm{i}q^{\ell-k/2}$.
\end{rem}

Theorem~\ref{thm: platonic R} immediately follows from Lemma~\ref{lem: D+D-} and the remarks preceding it. Now let us consider the case when $R$ is a product $R_1\times\cdots\times R_n$ of finite valuation rings. Then $R^{2,u}$ can be identified with $R_1^{2,u}\times\cdots\times R_n^{2,u}$ and so $D=D_1\otimes\cdots\otimes D_n$. For the even part of $D$, we have
\begin{align*}
D^+=\bigoplus_{\sigma_1\cdots\sigma_n >0} D_1^{\sigma_1}\otimes\cdots\otimes D_n^{\sigma_n}
\end{align*}
and so 
\begin{align*}
sp(D^+)=\bigcup_{\sigma_1\cdots\sigma_n >0} sp(D_1^{\sigma_1})\cdots sp(D_n^{\sigma_n})
\end{align*}
as multisets. This is spectrum of $\mathrm{Pl}(R_1\times\cdots\times R_n)$.

If each factor ring $R_i$ has $q_i\neq 3$ then $sp(D_i^-)\cdot sp(D_{i'}^-)\subseteq sp(D_i^+)\cdot sp(D_{i'}^+)$ for every choice of $i,i'=1,\dots,n$, by Lemma~\ref{lem: D+D-}. Therefore
\begin{align*}
sp(D^+)=sp(D_1^+)\cdots sp(D_n^+)
\end{align*}
as sets. 

If some $R_i$ has $q_i=3$, and $n\geq 2$, then this is no longer true: $sp(D^+)$ strictly contains the set of products $sp(D_1^+)\cdots sp(D_n^+)$. For the sake of notational simplicity, we illustrate this by means of an example. Let $R=R_1\times R_2$, where $q_1=3$ and $q_2\neq 3$. Then $sp(D^+)=sp(D_1^+)\cdot sp(D_2^+)\bigcup sp(D_1^-)\cdot sp(D_2^-)$ contains eigenvalues of the form $\pm \: 3^{\ell_1-1/2}\: q_2^{\ell_2-k/2}$ for $k=1,\dots,\ell_2$. These come from $sp(D_1^-)\cdot sp(D_2^-)$, and they cannot be realized in $sp(D_1^+)\cdot sp(D_2^+)$ since $\pm \: 3^{\ell_1-1/2}$ is missing from $sp(D_1^+)$. Informally, we might say that the missing eigenvalues of $D_1^+$ are only half lost, and they partly re-appear in the product, thanks to $D_1^-$.

What is true, in general, is that the largest, the second largest, and the smallest eigenvalue in $sp(D^+)$ are realized in $sp(D_1^+)\cdots sp(D_n^+)$. In particular, we can read off the extremal non-trivial eigenvalues of the Platonic graph over $\Z/(N)$, as stated in Theorem~\ref{thm: spec PLN}.

\section{Proof of Theorem~\ref{thm: abs E}}\label{sec: technical proof}
In order to understand the proof of Theorem~\ref{thm: abs E}, it will be helpful to start with the simple case of finite fields.
\begin{proof}[Proof of Theorem~\ref{thm: field E}]
Let $\psi$ be an additive character of $\F$, possibly trivial. Denote by $\psi^\ind$ the additive character of $\K$ induced from $\psi$ by pre-composing with the trace. Let also $\chi_\res$ denote the character of $\F^*$ obtained by restricting $\chi$. We may then consider the Gauss sum over $\K$
\begin{align*}
G(\psi^\ind, \chi)=\sum_{s\in \K^*} \psi(\Tr(s))\:\chi(s),
\end{align*}
as well as the Gauss sum over $\F$
\begin{align*}
G(\psi, \chi_\res)=\sum_{c\in \F^*} \psi(c)\:\chi(c).
\end{align*}
These two sums are in fact related. Decomposing over the fibers of the trace map, we write
\begin{align*}
G(\psi^{\mathrm{ind}}, \chi)=\sum_{c\in \F} \psi(c)\: \Big(\sum_{\substack{\Tr(s)=c \\ s\neq 0}} \chi(s)\Big).
\end{align*}
The term corresponding to $c=0$ is $E_0(\chi)$. For $c\neq 0$ we make the change of variable $s\mapsto cs$ in the inner sum, leading to
\begin{align*}
G(\psi^{\mathrm{ind}}, \chi)=E_0(\chi)+\sum_{c\in \F^*} \psi(c) \: \chi(c)\: \Big(\sum_{\Tr(s)=1} \chi(s)\Big)
\end{align*}
that is
\begin{align}\label{eq: ind-res}
G(\psi^{\mathrm{ind}}, \chi)=E_0(\chi)+G(\psi, \chi_{\mathrm{res}})\: E(\chi).
\end{align}
We read \eqref{eq: ind-res} as a linear relation between Eisenstein sums, with Gauss sums as coefficients. At this point, we need to recall some well-known facts about Gauss sums over finite fields. If $\psi$ is an additive character and $\chi$ is a multiplicative character of a finite field $\Bbbk$, then we have the trivial laws
\smallskip
\begin{itemize}[leftmargin=30pt, itemsep=4pt]
\item[(G1)] $G(\psi_0,\chi_0)=|\Bbbk^*|$;
\item[(G2)] $G(\psi_0,\chi)=0$ when $\chi\neq \chi_0$;
\item[(G3)] $G(\psi,\chi_0)=-1$ when $\psi\neq \psi_0$;
\end{itemize}
\smallskip
and, more interestingly, 
\smallskip
\begin{itemize}[leftmargin=30pt, itemsep=4pt]
\item[(G4)] $|G(\psi,\chi)|=\sqrt{|\Bbbk|}$ when $\psi\neq \psi_0$ and $\chi\neq \chi_0$.
\end{itemize}
\smallskip
Returning to the relation \eqref{eq: ind-res}, we consider the following two cases.

\emph{Case 1:} $\chi$ is non-trivial on $\F^*$. Taking $\psi=\psi_0$ in \eqref{eq: ind-res}, we get $E_0(\chi)=0$ by using (G2). Then \eqref{eq: ind-res} becomes 
\begin{align*}
G(\psi^{\mathrm{ind}}, \chi)=G(\psi, \chi_{\mathrm{res}})\: E(\chi).
\end{align*} 
Pick any non-trivial $\psi$, and note that $\psi^{\mathrm{ind}}$ is non-trivial as well, by the surjectivity of the trace. Taking absolute values, and using (G4), we get $|E(\chi)|=q^{(n-1)/2}$.

\emph{Case 2:} $\chi$ is trivial on $\F^*$. Taking $\psi=\psi_0$ in \eqref{eq: ind-res}, and using (G1) and (G2), we obtain $E_0(\chi)=-(q-1)\: E(\chi)$. Thus \eqref{eq: ind-res} turns into 
\begin{align*}
G(\psi^{\mathrm{ind}}, \chi)=-(q-1)\: E(\chi)+G(\psi, \chi_0)\: E(\chi).
\end{align*} 
For $\psi\neq \psi_0$, this says that $G(\psi^{\mathrm{ind}}, \chi)=-q\: E(\chi)$ in light of (G3). Finally, using (G4), we deduce that $|E(\chi)|=q^{n/2-1}$.
\end{proof}

\smallskip

The above proof is our blueprint. We start by extending the relations (G1) - (G4) from finite fields to finite valuation rings. A Gauss sum over a finite valuation ring $R$ has the form
\begin{align*}
G(\psi,\chi)=\sum_{u\in R^\times} \psi(u)\: \chi(u)
\end{align*}
where $\psi$ is a character of the additive group $R$, and $\chi$ is a character of the multiplicative group of units, $R^\times$. The trivial Gauss sums are easily computed.

\begin{lem}\label{lem: trivial Gauss R} The following hold:
\begin{align*}
G(\psi,\chi)= 
\begin{cases}
|R^\times|=(q-1)\: q^{\ell-1} & \textrm{ if } \psi=\psi_0 \textrm{ and } \chi=\chi_0\\
0 & \textrm{ if } \psi=\psi_0 \textrm{ and } \chi\neq \chi_0\\
0 & \textrm{ if } \chi=\chi_0 \textrm{ and } \psi \textrm{ is non-trivial on } (\pi)\\
-|(\pi)|=-q^{\ell-1} & \textrm{ if } \chi=\chi_0 \textrm{ and } \psi \textrm{ is trivial on } (\pi) \textrm{ but } \psi\neq \psi_0
\end{cases}
\end{align*}
\end{lem}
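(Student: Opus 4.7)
The plan is to handle each of the four cases by applying standard character orthogonality, using the decomposition $R^\times = R\setminus (\pi)$ to convert the sum over units into a difference of sums over additive subgroups of $R$.

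When $\psi=\psi_0$ is the trivial additive character, the Gauss sum reduces to $\sum_{u\in R^\times}\chi(u)$, i.e., the sum of $\chi$ over the abelian group $R^\times$. By the usual orthogonality relation on $R^\times$, this equals $|R^\times|=(q-1)q^{\ell-1}$ if $\chi=\chi_0$, and $0$ otherwise. Since we already noted that $|R^\times|=q^\ell-q^{\ell-1}$, this covers the first two rows.

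When $\chi=\chi_0$, I would write
\begin{align*}
G(\psi,\chi_0)=\sum_{u\in R^\times}\psi(u)=\sum_{r\in R}\psi(r)-\sum_{r\in(\pi)}\psi(r).
\end{align*}
Additive orthogonality on $R$ forces $\sum_{r\in R}\psi(r)=|R|=q^\ell$ when $\psi=\psi_0$ and $0$ otherwise, and similarly on the subgroup $(\pi)$ we get $|(\pi)|=q^{\ell-1}$ if $\psi$ is trivial on $(\pi)$ and $0$ otherwise. If $\psi$ is non-trivial on $(\pi)$, then $\psi$ is in particular non-trivial on $R$, so both sums vanish and $G(\psi,\chi_0)=0$. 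If $\psi$ is non-trivial on $R$ but trivial on $(\pi)$, then the first sum vanishes while the second equals $|(\pi)|$, giving $G(\psi,\chi_0)=-q^{\ell-1}$. This dispenses with the third and fourth rows.

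There is no real obstacle here: the argument is pure orthogonality, and the only point deserving care is the logical relation between the two hypotheses on $\psi$ — namely, that $\psi$ non-trivial on $(\pi)$ automatically implies $\psi\neq\psi_0$, whereas $\psi$ trivial on $(\pi)$ is compatible with either $\psi=\psi_0$ or $\psi\neq\psi_0$, and the case distinction is set up to match precisely these alternatives.
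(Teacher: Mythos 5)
The paper states this lemma without proof, remarking only that "the trivial Gauss sums are easily computed." Your argument is exactly the standard one that the remark implies: in the first two rows, multiplicative orthogonality on $R^\times$; in the last two, the decomposition $R^\times = R\setminus(\pi)$ (valid since $R$ is local with maximal ideal $(\pi)$) together with additive orthogonality on $R$ and on its subgroup $(\pi)$. The case split on whether $\psi$ is trivial on $(\pi)$ is handled correctly, and the observation that $\psi$ nontrivial on $(\pi)$ forces $\psi\neq\psi_0$ is the right consistency check. The proof is correct and is the argument the paper expects the reader to supply.
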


As for multiplicative characters, there is a notion of valuation for additive characters. Given an additive character $\psi$, we write $\nu(\psi)=k$ when $k$ is smallest with the property that $\psi$ is trivial on the additive group $(\pi^k)$. To have valuation $0$ is to be trivial. At the other end of the valuation spectrum, a character - additive or multiplicative - having valuation $\ell$ is said to be \emph{primitive}. For $k\geq 1$, a character has valuation $k$ if and only if it is induced from a primitive character of the ring $R/(\pi^k)$ by pre-composing with the quotient map $R\to R/(\pi^k)$. 

In the next lemma, we compute the absolute value of non-trivial Gauss sums - the analogue of (G4). With some effort, this result can be extracted from Lamprecht's detailed study \cite{Lam}. We prefer to give a direct, self-contained proof, partly based on arguments from \cite[pp.28--30]{BEW} addressing the case $R=\Z/(p^\ell)$.

\begin{lem}\label{lem: abs G}
Let $\psi$ and $\chi$ be non-trivial. Then:
\begin{align*}
|G(\psi,\chi)|= 
\begin{cases}
\sqrt{|R|\: |(\pi^k)|}=q^{\ell-k/2} & \textrm{ if } \nu(\psi)=\nu(\chi)=k\\
0 & \textrm{ if } \nu(\psi)\neq \nu(\chi)
\end{cases}
\end{align*}
\end{lem}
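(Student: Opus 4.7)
The plan is to compute $|G(\psi,\chi)|^2$ directly and reduce it to two layers of orthogonality — additive orthogonality on the ideal filtration $R \supset (\pi)$, and multiplicative orthogonality on the unit filtration $1+(\pi^k) \subset 1+(\pi^{k-1})$. Writing out $|G(\psi,\chi)|^2 = G(\psi,\chi)\:\overline{G(\psi,\chi)}$ and substituting $u = vw$ for $u,v \in R^\times$, I would obtain
\begin{align*}
|G(\psi,\chi)|^2 = \sum_{w \in R^\times} \chi(w)\: I(w), \qquad I(w) := \sum_{v \in R^\times} \psi\big(v(w-1)\big).
\end{align*}

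The next step is to evaluate the inner sum by writing $\sum_{v \in R^\times} = \sum_{v \in R} - \sum_{v \in (\pi)}$ and applying additive orthogonality for the character $v \mapsto \psi(v(w-1))$ on each of the two groups $R$ and $(\pi)$. With $\nu(\psi) = k$, the kernel of $\psi$ meets the ideal filtration exactly at $(\pi^k)$, so the first sum equals $|R| = q^\ell$ when $(w-1)R \subseteq (\pi^k)$ and $0$ otherwise, i.e., when $\nu(w-1) \geq k$; similarly the second equals $|(\pi)| = q^{\ell-1}$ when $\nu(w-1) \geq k-1$ and $0$ otherwise. Consequently $I(w) = q^\ell - q^{\ell-1}$ on $1+(\pi^k)$, $I(w) = -q^{\ell-1}$ on $(1+(\pi^{k-1}))\setminus (1+(\pi^k))$, and $I(w) = 0$ elsewhere.

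Substituting back, the outer sum collapses cleanly to $|G(\psi,\chi)|^2 = q^\ell A - q^{\ell-1} B$, where $A = \sum_{w \in 1+(\pi^k)} \chi(w)$ and $B = \sum_{w \in 1+(\pi^{k-1})} \chi(w)$. By multiplicative orthogonality on the subgroups $1+(\pi^k)$ and $1+(\pi^{k-1})$ of $R^\times$, the sums $A$ and $B$ equal the subgroup orders $q^{\ell-k}$ and $q^{\ell-k+1}$ precisely when $\chi$ is trivial on the respective subgroup, which by the definition of $\nu(\chi)$ amounts to $\nu(\chi) \leq k$ and $\nu(\chi) \leq k-1$. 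A three-way case split then finishes the argument: if $\nu(\chi) > k$ both sums vanish so $|G|^2 = 0$; if $\nu(\chi) = k$ then $A = q^{\ell-k}$ and $B=0$, giving $|G|^2 = q^{2\ell - k}$; if $\nu(\chi) < k$ then $A = q^{\ell-k}$ and $B = q^{\ell-k+1}$, and the two contributions cancel to give $|G|^2 = 0$.

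The main obstacle I expect is simply careful bookkeeping — matching the ideals that the additive-orthogonality step singles out against the congruence subgroups defining the valuation of $\chi$. The proof rests on the structural coincidence that a multiplicative character of valuation $m$ is detected by its restriction to the two adjacent subgroups $1+(\pi^m)$ and $1+(\pi^{m-1})$, which are precisely the two subgroups $1+(\pi^k)$ and $1+(\pi^{k-1})$ produced by the additive computation. Once this alignment is observed, the dichotomy $|G|^2 \in \{0,\: q^{2\ell - k}\}$ is essentially forced, and the exponent $\ell - k/2$ drops out.
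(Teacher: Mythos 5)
Your proof is correct. After substituting $u=vw$, you reach $|G(\psi,\chi)|^2=\sum_{w\in R^\times}\chi(w)\,I(w)$ with $I(w)=\sum_{v\in R^\times}\psi(v(w-1))$, and your evaluation of $I(w)$ via $\sum_{v\in R^\times}=\sum_{v\in R}-\sum_{v\in(\pi)}$ correctly localizes $I$ to the two congruence subgroups $1+(\pi^k)$ and $1+(\pi^{k-1})$ (where $k=\nu(\psi)$); the regrouping $|G|^2=q^\ell A-q^{\ell-1}B$ with $A=\sum_{1+(\pi^k)}\chi$, $B=\sum_{1+(\pi^{k-1})}\chi$ then checks out, and multiplicative orthogonality finishes in all three cases.

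The paper takes a more segmented route: it first disposes of the unequal-valuation cases by inducing the higher-valuation character from $R/(\pi^k)$ and summing over fibers of the quotient map, then reduces the equal-valuation case to the primitive situation ($\nu(\psi)=\nu(\chi)=\ell$) and only there performs the $|G|^2$ expansion. Your argument compresses both the reduction and the vanishing cases into a single computation: the inner sum $I(w)$ produces exactly the two congruence subgroups $1+(\pi^{k-1})\supset 1+(\pi^k)$ that detect whether $\nu(\chi)\le k-1$, $=k$, or $>k$, so the three outcomes fall out uniformly. What the paper's route buys is a conceptually cleaner vanishing argument and a shorter primitive computation; what yours buys is a single self-contained formula with no case split and no preliminary lifting lemma. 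Both are essentially the same expansion of $|G|^2$, and both rest on the same alignment between the additive filtration $(\pi^i)$ and the multiplicative filtration $1+(\pi^i)$ that you correctly flag as the structural coincidence at the heart of the lemma.

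One bookkeeping detail worth noting explicitly if you write this up: $1+(\pi^0)$ should be read as $R^\times$ (the paper's convention), so that the $k=1$ case of $B$ is $\sum_{R^\times}\chi=0$; and $1+(\pi^\ell)=\{1\}$, so the $k=\ell$ case of $A$ is just $\chi(1)=1=q^{\ell-\ell}$. With these conventions the formulas $|1+(\pi^j)|=q^{\ell-j}$ hold for $j=0,\dots,\ell$ and your three-way split applies verbatim.
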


\begin{proof}
\emph{Case 1: different valuations}. Assume $\nu(\chi)=k>\nu(\psi)$. Let $\chi'$ be the multiplicative character of $R/(\pi^k)$ that induces $\chi$. Then:
\begin{align*}
G(\psi,\chi)=\sum_{u\in R^\times} \psi(u)\: \chi'([u])=\sum_{v\in (R/(\pi^k))^\times}\chi'(v) \sum_{\substack{u\in R^\times \\ [u]=v \:}} \psi(u)
\end{align*}
Let $v\in (R/(\pi^k))^\times$, and pick $u_0\in R^\times$ such that $[u_0]=v$. Then $\{u\in R: [u]=v\}=u_0+(\pi^k)$, all of whose elements are actually units in $R$. Therefore
\begin{align*}
\sum_{\substack{u\in R^\times \\ [u]=v \:}} \psi(u)=\sum_{r\in (\pi^k)} \psi(u_0+r)=\psi(u_0)\sum_{r\in (\pi^k)} \psi(r),
\end{align*}
which vanishes since $\psi$ is non-trivial on $(\pi^k)$. We get $G(\psi,\chi)=0$ in this case.

Now assume that $\nu(\psi)=k>\nu(\chi)$. Let $\psi'$ be the additive character of $R/(\pi^k)$ that induces $\psi$. Then:
\begin{align*}
G(\psi,\chi)=\sum_{u\in R^\times} \psi'([u])\: \chi(u)=\sum_{v\in (R/(\pi^k))^\times}\psi'(v) \sum_{\substack{u\in R^\times \\ [u]=v \:}} \chi(u)
\end{align*}
As before, let $v\in (R/(\pi^k))^\times$, and pick $u_0\in R^\times$ such that $[u_0]=v$. We get
\begin{align*}
\sum_{\substack{u\in R^\times \\ [u]=v \:}} \chi(u)=\sum_{r\in (\pi^k)} \chi(u_0+r)=\chi(u_0)\sum_{r\in (\pi^k)} \chi(1+u_0^{-1}r)=\chi(u_0)\sum_{r\in (\pi^k)} \chi(1+r),
\end{align*}
which vanishes since $\chi$ is non-trivial on $1+(\pi^k)$. We get $G(\psi,\chi)=0$ in this case, as well.

\emph{Case 2: equal valuations}. Assume that $\nu(\psi)=\nu(\chi)=k$. Let $\psi'$ and $\chi'$ be the primitive characters of $R/(\pi^k)$ that induce $\psi$, respectively $\chi$. We claim that 
\begin{align*}G(\psi,\chi)=|(\pi^k)|\: G(\psi',\chi')\end{align*} 
the latter Gauss sum being over $R/(\pi^k)$. Indeed, each unit of $R/(\pi^k)$ has $|(\pi^k)|$ lifts to units of $R^\times$, so
\begin{align*}
G(\psi,\chi)=\sum_{u\in R^\times} \psi'([u])\: \chi'([u])=|(\pi^k)|\sum_{v\in (R/(\pi^k))^\times}\psi'(v)\:\chi'(v)=|(\pi^k)|\: G(\psi',\chi').
\end{align*}

With this reduction step at hand, it suffices to prove that $|G(\psi, \chi)|^2=|R|$ whenever $\psi$ and $\chi$ are primitive. We begin by expanding: 
\begin{align*}
\big|G(\psi, \chi)\big|^2&=\sum_{u,v\in R^\times}  \psi(u)\: \chi(u)\:  \overline{\psi}(v)\: \overline{\chi}(v)= \sum_{u,v\in R^\times}  \psi(u-v)\: \chi(u/v)\\
&=\sum_{u,v\in R^\times}  \psi\big((u-1)v\big)\: \chi(u)=\sum_{u\in R^\times}\chi(u) \sum_{v\in R^\times}\psi\big((u-1)v\big)
\end{align*}
The contribution of $u=1$ is $|R^\times|$, and we break up the remainder according to the valuation of $u-1$:
\begin{align*}
\big|G(\psi, \chi)\big|^2&=|R^\times|+\sum_{i=0}^{\ell-1}\sum_{\substack{u\in R^\times\\ \nu(u-1)=i}}\chi(u) \sum_{v\in R^\times}\psi\big((u-1)v\big)
\end{align*}

Let $u\in R^\times$ satisfy $\nu(u-1)=i$, where $0\leq i\leq \ell-1$. Note that the associates $\{(u-1)v: v\in R^\times\}$ represent the set $\{r: \nu(r)=i\}$ with multiplicity $|(\pi^{\ell-i})|$. We have
\begin{align*}
\sum_{v\in R^\times}\psi\big((u-1)v\big)=|(\pi^{\ell-i})|\sum_{\nu(r)=i}\psi(r)=|(\pi^{\ell-i})|\:\Big(\sum_{r\in (\pi^i)} \psi(r)-\sum_{r\in (\pi^{i+1})} \psi(r)\Big)
\end{align*}
which vanishes, unless $i=\ell-1$ in which case it equals $-|(\pi)|$. Here we are using our assumption that $\psi$ is non-trivial on each $(\pi^i)$ for $0\leq i\leq \ell-1$.

Thus
\begin{align*}
\big|G(\psi, \chi)\big|^2= |R^\times|-|(\pi)| \sum_{\substack{u\in R^\times\\ \nu(u-1)=\ell-1}}\chi(u) .
\end{align*}
Now
\begin{align*}
\sum_{\substack{u\in R^\times\\ \nu(u-1)=\ell-1}}\chi(u)=\sum_{\substack{u\in 1+(\pi^{\ell-1})\\ u\neq 1}}\chi(u)=-\chi(1)=-1
\end{align*}
as $\chi$ is non-trivial on $1+(\pi^{\ell-1})$. We conclude that $\big|G(\psi, \chi)\big|^2= |R^\times|+|(\pi)|=|R|$. 
 \end{proof}
 
Finally, we prove Theorem~\ref{thm: abs E} with the help of Lemmas~\ref{lem: trivial Gauss R} and ~\ref{lem: abs G}. 

\begin{proof}[Proof of Theorem~\ref{thm: abs E}]
In addition to $E_0(\chi)$ and $E(\chi)$, consider the higher Eisenstein sums of $\chi$ given by
\begin{align*}
E(\chi, \pi^i)=\sum_{\substack{y\in S^\times \\ \Tr(y)=\pi^i}} \chi(y), \qquad 0\leq i\leq \ell-1
\end{align*}
For $i=0$, we recover $E(\chi)$, while $E_0(\chi)$ could be thought of as corresponding to $i=\ell$.  As in the proof of Theorem~\ref{thm: field E}, we set up linear relations between the Eisenstein sums, with Gauss sums as coefficients. 

Let $\psi$ be an additive character on $R$, and write $\psi^\ind$ for the additive character of $S$ induced by the trace. Note that $\psi^\ind$ has the same valuation as $\psi$, by part i) of Proposition~\ref{prop: standard bilinear form}. Consider $G(\psi^\ind, \chi)$, a Gauss sum over $S$, and write it as follows:
\begin{align*}
G(\psi^\ind, \chi)=\sum_{y\in S^\times} \psi(\Tr(y))\: \chi(y)=\sum_{\substack{y\in S^\times \\ \Tr(y)=0}} \chi(y)+\sum_{i=0}^{\ell-1} \sum_{\substack{y\in S^\times \\ \nu(\Tr(y))=i}} \psi(\Tr(y))\:\chi(y)
\end{align*}
The first term is $E_0(\chi)$. For each $0\leq i\leq \ell-1$ we compute
\begin{align*}
 \sum_{\substack{y\in S^\times \\ \nu(\Tr(y))=i}} \psi(\Tr(y))\:\chi(y)&=\frac{1}{|(\pi^{\ell-i})|}\sum_{u\in R^\times} \sum_{\substack{y\in S^\times \\ \Tr(y)=\pi^iu}} \psi(\Tr(y))\:\chi(y)\\
&=\frac{1}{|(\pi^{\ell-i})|}\sum_{u\in R^\times} \psi(\pi^iu) \sum_{\substack{y\in S^\times \\ \Tr(y)=\pi^iu}} \chi(y)
\end{align*}
which, after a change of variable $y:=yu$ in the right-hand inner sum, becomes
\begin{align*}
 \sum_{\substack{y\in S^\times \\ \nu(\Tr(y))=i}} \psi(\Tr(y))\:\chi(y)&=\frac{1}{|(\pi^{\ell-i})|} \sum_{u\in R^\times} \psi(\pi^iu)\: \chi(u)\: E(\chi, \pi^i)\\
 &= \frac{1}{|(\pi^{\ell-i})|}\: G(\psi_{(i)}, \chi_\res) \: E(\chi, \pi^i)
\end{align*}
where $\psi_{(i)}$ denotes the additive character of $R$ defined by $r\mapsto \psi(\pi^ir)$. Summarizing, we have shown that
\begin{align}\label{eq: general}
G(\psi^\ind, \chi)=E_0(\chi)+\sum_{i=0}^{\ell-1}\frac{G(\psi_{(i)}, \chi_\res)}{|(\pi^{\ell-i})|}\: E(\chi, \pi^i).
\end{align}

\emph{Case 1}: $\chi_\res\neq\chi_0$. Taking $\psi=\psi_0$ in \eqref{eq: general}, we see that each Gauss sum vanishes so we get
\begin{align*}
E_0(\chi)=0.
\end{align*}
Let $\nu(\chi_\res)=k'$, and pick $\psi$ so that $\nu(\psi)=k'$. As $G(\psi_{(i)},\chi_\res)=0$ for $i\neq 0$, we get from \eqref{eq: general} that
\begin{align*}
G(\psi^\ind, \chi)=G(\psi,\chi_\res) \: E(\chi).
\end{align*}
If $k'\neq k$ then $G(\psi^\ind, \chi)=0$ while $G(\psi,\chi_\res)\neq 0$, so $E(\chi)=0$. If $k'= k$ then
\begin{align*}
|E(\chi)|=\frac{|G(\psi^\ind, \chi)|}{|G(\psi,\chi_\res)|}=\frac{(q^n)^{\ell-k/2}}{q^{\ell-k/2}}=q^{(n-1)(\ell-k/2)}.
\end{align*}

\emph{Case 2}: $\chi_\res=\chi_0$. Put 
\begin{align*}
e_i:=-\frac{|R^\times|}{|(\pi^{\ell-i})|}\: E(\chi, \pi^i).
\end{align*}
In particular, $e_0=-|R^\times|\: E(\chi)$.

Taking $\psi=\psi_0$ in \eqref{eq: general}, we have $G(\psi^\ind,\chi)=0$ and $G(\psi_{(i)}, \chi_\res)=G(\psi_0,\chi_0)=|R^\times|$ for each $i$, so we get
\begin{align}\label{eq: last one!}
E_0(\chi)=\sum_{i=0}^{\ell-1} e_i.
\end{align}
Plugging \eqref{eq: last one!} into \eqref{eq: general}, we are led to
\begin{align}\label{eq: general2}
G(\psi^\ind, \chi)=\sum_{i=0}^{\ell-1} \Big(1-\frac{G(\psi_{(i)}, \chi_0)}{|R^\times|}\Big)\: e_i.
\end{align}
Note that the coefficient of $e_i$ is 0 if $\psi_{(i)}$ is trivial; $(1-q^{-1})^{-1}$ if $\psi_{(i)}$ is non-trivial but trivial on $(\pi)$; $1$ if $\psi_{(i)}$ is non-trivial on $(\pi)$. If $\psi$ has valuation $j+1$, then $\psi_{(j+1)}, \dots, \psi_{(\ell-1)}$ are trivial; $\psi_{(j)}$ is non-trivial but trivial on $(\pi)$; $\psi_{(j-1)}, \dots,\psi_{(0)}=\psi$ are non-trivial on $(\pi)$. Thus \eqref{eq: general2} turns into the following relation:
\begin{align}\label{eq: valued}
G(\psi^\ind, \chi)=e_0+\cdots+e_{j-1}+(1-q^{-1})^{-1}\: e_j, \qquad \nu(\psi)=j+1
\end{align}
We will use \eqref{eq: valued} on successive values of $j$.

For $j=0,\dots,k-2$, the Gauss sum in \eqref{eq: valued} vanishes. Inductively, we get
\begin{align*}
e_0=\cdots= e_{k-2}=0.
\end{align*}
In particular, $E(\chi)=0$ when $k\geq 2$.

For $j=k-1$ we get
\begin{align*}
G(\psi^\ind, \chi)= (1-q^{-1})^{-1}\: e_{k-1}.
\end{align*}
As $\nu(\psi^\ind)=\nu(\chi)=k$, we have 
\begin{align*}
|e_{k-1}|= (1-q^{-1})\: |G(\psi^\ind, \chi)|=(1-q^{-1})\: (q^n)^{\ell-k/2}.
\end{align*}
In particular, when $k=1$ we find that $|E(\chi)|=|e_0|/|R^\times|=q^{(n-1)\ell-n/2}$.

For $j=k,\dots,\ell-1$, the Gauss sum in \eqref{eq: valued} vanishes, once again. We inductively get
\begin{align*}
e_{k+s}=-(1-q^{-1})\: q^{-s}\: e_{k-1}, \qquad s=0, \dots,\ell-k-1.
\end{align*}
Thus, by \eqref{eq: last one!},
\begin{align*}
E_0(\chi)&=e_{k-1}+\sum_{s=0}^{\ell-k-1} e_{k+s}=e_{k-1}-(1-q^{-1})\bigg(\sum_{s=0}^{\ell-k-1} q^{-s}\bigg) e_{k-1}\\
&=q^{-(\ell-k)}\: e_{k-1}
\end{align*}
and so we find that
\begin{align*}
|E_0(\chi)|=(1-q^{-1})\: q^{(n-1)\ell -(n/2-1)k}.
\end{align*}
If $k=1$ then $E_0(\chi)=q^{-(\ell-1)}\: e_{0}=-q^{-(\ell-1)}\:|R^\times|\: E(\chi)=-(q-1)\: E(\chi)$.
\end{proof}


\end{document}